\documentclass[11pt,a4paper]{amsart} 
\usepackage{amsthm,amstext,amssymb,color,enumerate}
 \usepackage[all]{xy}
\usepackage{url}
\usepackage{newtxtext,newtxmath}
\usepackage{hyperref}
\usepackage[margin=2cm]{geometry}
\newcommand{\pull}{W}
\newcommand{\kk}{\mathbb{K}}
\newcommand{\R}{\mathbb{R}}

\newcommand{\Q}{\mathbb{Q}}

\newcommand{\Z}{\mathbb{Z}}

\newcommand{\Sp}{Sp}

\newcommand{\SO}{SO}
\newcommand{\SU}{SU}

\newcommand{\inprod}{\mathfrak{m}}

\numberwithin{equation}{section}
\numberwithin{figure}{section}
\theoremstyle{plain}
  \newtheorem*{thm*}{Theorem}
\newtheorem{thm}[equation]{Theorem}
  \newtheorem{prop}[equation]{Proposition}
  \newtheorem{lem}[equation]{Lemma}
  \newtheorem{cor}[equation]{Corollary}
  \newtheorem{conj}[equation]{Conjecture}
\theoremstyle{definition}
  \newtheorem{dfn}[equation]{Definition}
  \newtheorem{ex}[equation]{Example}
\theoremstyle{remark}
  \newtheorem{rem}[equation]{Remark}
  \newtheorem*{rem*}{Remark}
\theoremstyle{plain}
\makeatletter
\newtheorem*{rep@theorem}{\rep@title}
\newcommand{\newreptheorem}[2]{%
\newenvironment{rep#1}[1]{%
 \def\rep@title{#2 \ref{##1}}%
 \begin{rep@theorem}}%
 {\end{rep@theorem}}}
\newtheorem{theorem}{Theorem}

\makeatother

\newreptheorem{thm}{Theorem}
\newreptheorem{lem}{Lemma}


\begin{document}

\title{Products in equivariant homology}
\date{\today}
\author{Shizuo Kaji}
\thanks{The first named author was partially supported by KAKENHI, Grant-in-Aid for Young
     Scientists (B) 26800043 and JSPS Postdoctoral Fellowships for Research Abroad.}
\address[S. Kaji]{Department of Mathematical Sciences,
Yamaguchi University  \endgraf
1677-1, Yoshida, Yamaguchi 753-8512, Japan}
\email{skaji@yamaguchi-u.ac.jp}
\author{Haggai Tene}
\address[H. Tene]{Mathematisches Institut,
Universit\"at Heidelberg \endgraf 
Im Neuenheimer Feld 288, 69120 Heidelberg, Germany}
\email{tene@mathi.uni-heidelberg.de}

\subjclass[2010]{
Primary 55N91; Secondary 55R40, 55N45}
\keywords{equivariant homology, free loop space, string topology, intersection product,
Tate cohomology, stratifold, homotopy pullback, umkehr map}

\begin{abstract}
We refine the intersection product in homology to an equivariant setting, which unifies several known constructions. 
As an application, we give a common generalisation of the Chas-Sullivan string product on a manifold 
and the Chataur-Menichi string product on the classifying space by defining a string product on the Borel construction of a manifold. 
We prove a vanishing result which enables us to define a secondary product.
The secondary product is then used to construct secondary versions of the Chataur-Menichi string product, 
and the equivariant intersection product in the Borel equivariant homology of a manifold with an action of a compact Lie group. 
The latter reduces to the product in homology of the classifying space defined by Kreck, 
which coincides with the cup product in negative Tate cohomology if the group is finite. 
\end{abstract}

\maketitle

\section{Introduction}

Let $M$ be a smooth manifold together with a smooth action of a compact 
(not necessarily connected) Lie group $G$.
Denote by $M_{G}\xrightarrow{m} BG$ its Borel construction. 
Assume the action of $G$ on $M$ is {$h$-oriented} in an appropriate sense (see Definition \ref{orientation}),
where $h$ is a multiplicative generalised (co)homology theory in the sense of \cite{Di},
 and is always assumed to satisfy the weak equivalence axiom: a weak equivalence induces an isomorphism.

Given two spaces $X$ and $Y$ over $M_G$, that is, spaces together with maps $X\to M_G$ and $Y\to M_G$,
consider the homotopy pullback diagram
\begin{equation}\label{square}
\xymatrix{
P \ar[d] \ar[r] & X \ar[d] \\
Y \ar[r] & M_G.
}
\end{equation}
 In \S \ref{sec:primary} we define a homomorphism
\[
\mu_{M_G}: h_k(X)\otimes h_l(Y) \to h_{k+l+\dim(G)-\dim(M)}(P),
\]
which we call the (primary) product for the diagram \eqref{square}.
\begin{theorem}[Theorem \ref{def-of-primary}, 
Propositions \ref{primary-property}, \ref{primary-restriction}, and \ref{primary-extension}]
The product $\mu_{M_G}$ is associative up to sign.
It enjoys the following forms of naturality:
with respect to maps of diagrams with the same base $M_G$,
with respect to closed subgroup inclusions,
and with respect to group extensions.
Dually, there exists a homomorphism in cohomology of the form $h^{m}(P)\to h^{m-dim(G)+dim(M)}(X\times Y)$.
\end{theorem}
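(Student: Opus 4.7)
The plan is to build $\mu_{M_G}$ as a composite of two umkehr maps by factoring the ``diagonal'' $M_G \to M_G \times M_G$ as
\[
M_G \xrightarrow{\Delta_M} M_G \times_{BG} M_G \longrightarrow M_G \times M_G,
\]
where the second map is the pullback of $\Delta_{BG}\colon BG\to BG\times BG$ along $m\times m$. Pulling back $X\times Y\to M_G\times M_G$ successively along these two maps produces $X\times_{BG} Y$ and then $P$. The fibrewise diagonal $\Delta_M$ is a smooth embedding on fibres with normal bundle $TM$, so the assumed $h$-orientation yields a Thom class and an umkehr lowering degree by $\dim M$. The second map is a fibration with homotopy fibre $G$, a compact oriented Lie group of dimension $\dim G$, so it has a fibre-integration umkehr raising degree by $\dim G$. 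Combined with the exterior product, these two umkehrs define $\mu_{M_G}$.

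For associativity, I would identify both iterated products with the umkehr map associated to a single combined map $M_G \to (M_G)^{\times_{BG} 3} \to (M_G)^{\times 3}$: each parenthesization realises the same composite through a different decomposition. The two decompositions differ only by a reordering of tangent-bundle Thom classes on $M$ and of orientations on $G$-fibres, which amounts to multiplication by a unit of $h^*$; this is precisely the ``up to a unit'' clause in the statement.

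Naturality for maps of diagrams over a fixed $M_G$ is immediate from the naturality of both umkehr maps under pullback squares. For a closed subgroup inclusion $H\subseteq G$, the comparison map $M_H \to M_G$ is a fibre bundle with fibre $G/H$, and the two diagonal decompositions for $H$ and for $G$ fit in a commuting diagram whose vertical maps are all $G/H$-bundles; compatibility then reduces to the multiplicativity of umkehr maps in towers of fibrations. For a group extension $1\to K \to G \to Q\to 1$, the tower $M_K \to M_G \to M_Q$ lets one split both umkehrs into stages and exhibits the $G$-product as a composite of the $K$- and $Q$-products. The dual cohomological map is constructed symmetrically: the embedding $P\to X\times_{BG} Y$ supplies a Gysin map of degree $+\dim M$, and the $G$-fibration $X\times_{BG} Y\to X\times Y$ supplies fibre integration of degree $-\dim G$.

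The technical heart of the proof is making sense of the umkehr for $\Delta_{BG}$ rigorously, since $BG$ is not a finite-dimensional manifold. My plan is to construct it fibrewise over $M_G$ using the fibration $M_G\times_{BG} M_G\to M_G$ whose fibre is $M$; this requires a Thom isomorphism for the bundle of tangents along the fibres, which is exactly what the $h$-orientation of Definition \ref{orientation} is set up to provide. The unit ambiguities in associativity and in the subgroup and extension comparisons all trace back to reorderings of such Thom classes and of the orientation on $G$, and pinning them down is what turns the several naturality statements into genuine identities rather than equalities up to further units.
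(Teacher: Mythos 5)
Your overall construction is the same as the paper's: the key is the factorisation of $\Delta_{M_G}$ as $M_G \xrightarrow{\Delta_G} (M\times M)_{\Delta G} \xrightarrow{p} M_G\times M_G$ (your $M_G\times_{BG}M_G$ is exactly $(M\times M)_{\Delta G}$), with $\mu_{M_G}$ defined by composing the exterior product, the transfer for the $G$-bundle $p$, and the cofibration (Thom) umkehr for the fibrewise diagonal, in that order. Your sketch of associativity via the triple diagonal and of naturality for same-base diagrams matches what the paper records as ``easy to check'' in Proposition \ref{primary-property}, and your dual cohomological map has the right degrees.

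However, your ``technical heart'' paragraph conflates the two umkehrs. For the second map (the pullback of $\Delta_{BG}$), the technically relevant datum is the bundle of tangents along the $G$-fibre, which Lemma \ref{orientability} identifies with the universal adjoint bundle $ad(EG)$; the paper handles this with the Grothendieck bundle transfer (Boardman), not a fibrewise Thom construction. The fibration you name, $M_G\times_{BG}M_G\to M_G$ with fibre $M$, is instead the one whose bundle of tangents along the fibre is $(TM)_G$, which orients the \emph{first} map $\Delta_G$. Definition \ref{orientation} requires both orientations; you only account for the $(TM)_G$ part and mislabel it as orienting the $\Delta_{BG}$-umkehr. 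As written, your plan would not produce the transfer $\hat p^\natural$ correctly.

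The extension naturality is also off. The paper's Proposition \ref{primary-extension} proves $\mu_{M_G}=\pm d^\natural\circ\mu_{M_{G/N}}$ (assuming $N$ acts trivially on $M$), via an auxiliary group $K=\{(x,y)\in G\times G: \gamma(x)=\gamma(y)\}$ through which $\Delta_{BG}$ factors as $BG\xrightarrow{d}BK\xrightarrow{j}BG\times BG$, with $d$ of fibre $N$ and $j$ of fibre $G/N$. Your proposal to ``split both umkehrs into stages and exhibit the $G$-product as a composite of the $K$- and $Q$-products'' does not describe this: the result is not a composite of two products but a single transfer applied to the quotient-group product, and the decomposition needed is of $\Delta_{BG}$ through $BK$, not a tower $M_K\to M_G\to M_Q$. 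You also omit the trivial-action hypothesis on $N$, without which $M_{G/N}$ is undefined.
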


In general, this product is non-trivial. 
Some interesting examples related to string topology and equivariant homology are given later in the introduction.
In \S \ref{sec:vanishing}, we focus on the case of the ordinary homology with coefficients in a commutative ring $R$, 
 and show that $\mu_{M_G}$ vanishes under a certain degree condition.
Denote by $\dim_R(A)$ the maximal integer $n$ such that $H_n(A;R)\neq 0$ for a space $A$, 
and denote by $F_X$ (resp. $F_Y$) the homotopy fibre of the composition $X\to M_G\xrightarrow{m} BG$
(resp. $Y\to M_G\xrightarrow{m} BG$).
\begin{theorem}[Theorem \ref{vanishing}]
The product $\mu_{M_G}: H_k(X;R) \otimes H_l(Y;R) \to H_{k+l+\dim(G)-\dim(M)}(P;R)$
is trivial if $k>\dim_R(F_X)-\dim(G)$ or $l>\dim_R(F_Y)-\dim(G)$.
\end{theorem}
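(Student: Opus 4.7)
The plan is to decompose the umkehr $\Delta_{M_G}^!$ defining $\mu_{M_G}$ and reduce the vanishing to a dimensional constraint on the fibre homology of $Y \to BG$ (symmetrically, of $X \to BG$).

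The construction in \S\ref{sec:primary} factors the equivariant diagonal $\Delta_{M_G}\colon M_G \to M_G \times M_G$ through the intermediate Borel construction $(M \times M)_G$ under the diagonal $G$-action: $\Delta_{M_G} = \pi \circ \widetilde{\Delta}$, where $\widetilde{\Delta}\colon M_G \to (M \times M)_G$ is the fibrewise diagonal of $M$ and $\pi\colon (M \times M)_G \to M_G \times M_G$ is induced by the diagonal $G \hookrightarrow G \times G$. The homology umkehr accordingly splits as $\Delta_{M_G}^! = \widetilde{\Delta}^! \circ \pi^!$, contributing the shifts $-\dim M$ and $+\dim G$ respectively, and $\mu_{M_G}(\alpha \otimes \beta) = \widetilde{\Delta}^! \pi^!(\alpha \times \beta)$.

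The heart of the argument is to view the base-changed
\[
\pi^! \colon H_{k+l}(X \times Y; R) \to H_{k+l+\dim G}(X \times_{BG} Y; R)
\]
as a fibrewise umkehr over $X$: the projection $X \times Y \to X$ has fibre $Y$, the projection $X \times_{BG} Y \to X$ has fibre $F_Y$, and $\pi$ restricts fibrewise to the fibre inclusion $F_Y \hookrightarrow Y$ of the fibration $Y \to BG$. Since $BG$ has ``virtual dimension'' $-\dim G$, the associated fibre-inclusion umkehr shifts homology upwards by $\dim G$, yielding $\pi^!_{\mathrm{fib}}\colon H_*(Y;R) \to H_{*+\dim G}(F_Y;R)$ and a product formula $\pi^!(\alpha \times \beta) = \alpha \times \pi^!_{\mathrm{fib}}(\beta)$. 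Under the hypothesis $l > \dim_R(F_Y) - \dim G$ we have $l + \dim G > \dim_R(F_Y)$, so $H_{l+\dim G}(F_Y; R) = 0$; this forces $\pi^!_{\mathrm{fib}}(\beta) = 0$, hence $\mu_{M_G}(\alpha \otimes \beta) = \widetilde{\Delta}^!\pi^!(\alpha \times \beta) = 0$. The symmetric viewpoint (fibrewise over $Y$, with fibres given by $X \to BG$) handles the $k$-bound.

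The main obstacle is to justify the identification of $\pi^!$ with the fibrewise fibre-inclusion umkehr $\pi^!_{\mathrm{fib}}$; this depends on a Thom-collapse description coming from the $h$-orientation of $M$, with the $\dim G$-shift arising from the adjoint bundle $EG \times_G \mathfrak{g} \to BG$. I would address this through the stratifold/homotopy-pullback model of \S\ref{sec:primary}, where transversality should produce the fibrewise decomposition of $\pi^!$ and its effect on homological degree on the nose.
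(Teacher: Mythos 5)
Your central step, the ``product formula'' $\pi^!(\alpha\times\beta)=\alpha\times\pi^!_{\mathrm{fib}}(\beta)$, does not typecheck and cannot be made literally true: the codomain of $\pi^!=\hat p^\natural$ is $H_{k+l+\dim G}(X\times_{BG}Y;R)$, whereas $\alpha\times\pi^!_{\mathrm{fib}}(\beta)$ lives in $H_{k+l+\dim G}(X\times F_Y;R)$, and the fibration $X\times_{BG}Y\to X$ with fibre $F_Y$ is in general not a product. The map $\pi^!_{\mathrm{fib}}:H_*(Y;R)\to H_{*+\dim G}(F_Y;R)$ does exist (it is the Grothendieck bundle transfer for the principal $G$-bundle over $Y$ obtained by pulling back $EG\to BG$, whose total space is a model for $F_Y$), and the hypothesis $l>\dim_R(F_Y)-\dim G$ does force $H_{l+\dim G}(F_Y;R)=0$. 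But this alone does not annihilate $\hat p^\natural(\alpha\times\beta)$: one would need to show that $\hat p^\natural$ is a map of spaces over $X$ compatible with the Serre filtration and that $\hat p^\natural(\alpha\times\beta)$ has filtration at most $k$, which is far from automatic for a transfer defined by a stable Thom--Pontryagin collapse, and is not supplied by the ``stratifold/transversality'' remark at the end. As written, the deduction is a gap, not a proof.

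The paper closes exactly this gap by a different use of naturality: choose a CW complex $T$ of dimension $l$ together with a map $T\to Y$ surjecting onto $H_l(Y;R)$, and pull back the whole defining square along it. The resulting homotopy pullback $\tilde T$ fibres over $T$ with fibre $F_X$, so the Serre spectral sequence gives $\dim_R(\tilde T)\le l+\dim_R(F_X)<k+l+\dim G$ under the $k$-bound (symmetrically, use an $S\to X$ of dimension $k$ for the $l$-bound), and the transfer already vanishes in $H_{k+l+\dim G}(\tilde T;R)=0$ before pushing forward; finally $\mu_{M_G}=\hat\Delta_G^!\circ\mu_{BG}$, so vanishing of the $BG$-level product suffices. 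In effect, replacing one side by a finite-dimensional model turns your filtration heuristic into a clean dimension count, sidestepping both the failure of the product formula and the question of whether the transfer respects Serre filtrations. Your intuition about which fibre's homology is responsible is correct, but you would still need this finite-model reduction (or a proved analogue) to make the argument go through.
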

In particular, this implies that the pair-of-pants product defined by Chataur-Menichi vanishes whenever $k$ or $l$ is positive
(see Example \ref{ex:Chataur-Menichi}).

We use this vanishing phenomenon to introduce a secondary version $\overline{\mu_{M_G}}$ of $\mu_{M_G}$ in \S \ref{sec:def-secondary}. 
\begin{theorem}[Theorem \ref{def-of-secondary}, Propositions \ref{secondary-naturality-wrt-equivariant-maps}, \ref{secondary-restriction} and \ref{secondary-extension}]
Let $k>\dim_R(F_X)-\dim(G)$ and $l>\dim_R(F_Y)-\dim(G)$.
There exists a homomorphism 
\[
\overline{\mu_{M_G}}:H_{k}(X;R)\otimes H_{l}(Y;R)\to H_{k+l+dim(G)-dim(M)+1}(P;R),
\]
which is natural with respect to maps of diagrams with the same base $M_G$,
with respect to closed subgroup inclusions,
and with respect to group extensions.
\end{theorem}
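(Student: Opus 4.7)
The plan is to realise $\overline{\mu_{M_G}}$ as a standard secondary operation: because the previous vanishing theorem shows that $\mu_{M_G}$ is zero in the specified range for structural rather than accidental reasons, the stable map underlying $\mu_{M_G}$ is canonically nullhomotopic there, and $\overline{\mu_{M_G}}$ is extracted by choosing such a nullhomotopy and reading off the resulting class one degree higher.

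Concretely, I would first revisit the construction of $\mu_{M_G}$ from the earlier section: it is assembled from the cross product and an umkehr map coming from a Pontryagin--Thom collapse along the Borel-construction version of the diagonal $M \to M \times M$. In ordinary $R$-homology this umkehr can be analysed fibrewise through the Serre spectral sequences of $F_X \to X \to BG$ and $F_Y \to Y \to BG$. The hypotheses $k > \dim_R(F_X) - \dim(G)$ and $l > \dim_R(F_Y) - \dim(G)$ place the input classes above the top fibrewise nonzero degree in a skeletal filtration adapted to these fibrations, which is precisely the regime in which the proof of the vanishing theorem exhibits the umkehr map as nullhomotopic on the relevant filtration quotients, rather than merely zero on homology. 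One picks such a chain-level nullhomotopy and defines $\overline{\mu_{M_G}}(x \otimes y)$ to be the homology class in degree $k+l+\dim(G)-\dim(M)+1$ represented by the associated bounding datum.

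For well-definedness, two choices of chain-level nullhomotopy differ by an element of the image of $\mu_{M_G}$ applied to $H_{k+1}(X;R) \otimes H_l(Y;R)$ or $H_k(X;R) \otimes H_{l+1}(Y;R)$; since $k+1 > \dim_R(F_X) - \dim(G)$ and $l+1 > \dim_R(F_Y) - \dim(G)$, the vanishing theorem applied to these shifted inputs forces the indeterminacy to be zero, so $\overline{\mu_{M_G}}$ is an honest homomorphism with no quotient required. Naturality under maps of diagrams over $M_G$, closed subgroup inclusions $H \hookrightarrow G$, and group extensions $1 \to N \to G \to Q \to 1$ then follows directly from the corresponding naturality of $\mu_{M_G}$ established in the primary theorem, because the Pontryagin--Thom data, the Serre filtrations, and hence the canonical nullhomotopies are functorial under all three operations. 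The main technical obstacle is therefore not any single geometric step but the careful bookkeeping of degrees and filtration levels needed to match the vanishing ranges of the primary and shifted inputs, ensuring that no residual indeterminacy appears.
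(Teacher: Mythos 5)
Your high-level slogan -- that $\overline{\mu_{M_G}}$ is a secondary operation extracted from a nullhomotopy witnessing the vanishing of $\mu_{M_G}$ -- captures the spirit of the construction, but the proposal contains a gap at the exact point where the paper does its real work, and the well-definedness argument is incorrect.

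The vanishing theorem does not produce a nullhomotopy of any stable map. Its proof (Lemma \ref{vanishing lemma}) factors the input class through a CW-complex $T$ of dimension $l$, pulls back the fibration, and observes that the total space $\tilde T$ has $\dim_R(\tilde T) < k+l+\dim(G)$, so the relevant homology group vanishes for degree reasons. This is an argument about homology of a \emph{replacement} space, not a nullhomotopy of the Pontryagin--Thom/umkehr data; a stable map can be nonzero while inducing the zero map in the relevant homological degree, so ``zero on homology'' does not yield ``canonically nullhomotopic.'' The step that makes the actual construction go is precisely this replacement: one chooses CW-complexes $S$ and $T$ of dimensions $k$ and $l$ with maps to $X$ and $Y$ carrying the given classes, forms the homotopy pullback $W$ over $BG$, and applies $\mu_{BG}$ to get an honest, generally \emph{nonzero} class in $H_{k+l+\dim(G)}(W;R)$. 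The degree hypotheses then force $\dim_R(\hat S),\dim_R(\hat T)<k+l+\dim(G)$, so that in the Mayer--Vietoris sequence of the homotopy pushout $B$ of $\hat S\leftarrow W\to\hat T$ the boundary $\partial:H_{N+1}(B;R)\to H_N(W;R)$ is an isomorphism; composing $\partial^{-1}$ with the whisker map $q:B\to (K\times L)_G$ and finally $\hat\Delta_G^!$ gives the product. Your proposal never introduces $S$ or $T$ and never invokes the homotopy pushout / Mayer--Vietoris step, so there is nowhere for the degree-$(+1)$ class to come from.

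The indeterminacy argument is also wrong as stated. If $h_1, h_2$ are two chain nullhomotopies of the same degree-$d$ chain map, their difference $h_1-h_2$ is a degree-$(d+1)$ chain map, and the ambiguity in the associated secondary class of a cycle $c$ is the homology class $[(h_1-h_2)(c)]$. This is the image of $c$ under an arbitrary degree-shifted chain map, not the value of $\mu_{M_G}$ on a class of degree $k+1$ or $l+1$, so ``apply the vanishing theorem to shifted inputs'' does not dispose of it. What the paper actually proves (Lemma \ref{well-definedness}) is independence of the choice of $(S,T,\overline\alpha,\overline\beta)$: it passes to the $(k{+}1)$-skeleton $S'$ of a CW-approximation of $K_G$, observes that a class in $\ker f_*$ dies already in $S'$, and chases the naturality of Mayer--Vietoris to kill the contribution. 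That is an argument specific to the CW-lifting formulation and has no counterpart in the proposal. Finally, the paper does record (at the end of \S\ref{sec:def-secondary}) a reformulation via a homotopy, but it is the \emph{commuting homotopy of the (replaced) square}, applied to the diagram built from $S$ and $T$ -- not a nullhomotopy of the umkehr map on $X\times Y$ -- and the dimension bounds on $\hat S,\hat T$ are what make $H_{N+1}(D)\to H_{N+1}(D,B\sqcup C)$ an isomorphism there. Without the replacement step, that reformulation is unavailable too.
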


Now we list interesting examples of $\mu_{M_G}$. 
In the following examples, we assume all actions are suitably oriented (see Definition \ref{orientation}) without mentioning it.
\begin{enumerate}
\item Let $M=pt$. For $G$-spaces $K$ and $L$ with
classifying maps $K_G\xrightarrow{}BG, L_G\xrightarrow{}BG$,
we have the following homotopy pullback diagram:
\[
\xymatrix{
(K\times L)_G \ar[d] \ar[r] & K_G \ar[d] \\
L_G \ar[r] & BG.
}
\]
In this case, we obtain an exterior product in the homology of the Borel construction
\[
\mu_{BG}: h_k(K_G)\otimes h_l(L_G) \to h_{k+l+\dim(G)}((K\times L)_G),
\]
which reduces to the ordinary cross product when $G$ is trivial.
\item When $X=Y=M_G$ with the identity maps, we obtain an ``equivariant intersection product''
\[
\inprod: h_k(M_G)\otimes h_l(M_G) \to h_{k+l+\dim(G)-\dim(M)}(M_G),
\]
which reduces to the intersection product when $G$ is trivial.
If $h$ is the integral homology, 
equivariant homology classes can be represented by bordism classes of equivariant maps from closed, oriented stratifolds with free and orientation preserving actions of $G$ 
(see \cite{T2} and Appendix \ref{sec:stratifold}).
In this description, the product is given by transversal intersection.
\item
Let $X=Y=L(M_G)$ be the free loop space over $M_G$ with the evaluation map $ev_0: L(M_G)\to M_G$.
In this case, $P \sim\mathrm{Map}(S^1\vee S^1, M_G)$ and 
by composing $\mu_{M_G}$ with the induced map of the concatenation 
$cat: \mathrm{Map}(S^1\vee S^1, M_G) \to L(M_G)$,
we obtain a string product in the Borel construction:
\[
\psi: h_k(L(M_G))\otimes h_l(L(M_G)) \to h_{k+l+\dim(G)-\dim(M)}(L(M_G)).
\]
This is similar to the product constructed by Behrend-Ginot-Noohi-Xu (\cite{BGNX}) in the language of stacks.
This is also similar to the product by Lupercio-Uribe-Xicotencatl \cite{LMX} for orbifolds $[M/G]$ when $G$ is finite and $h$ is
 rational homology, which is later generalised by \'Angel-Backelin-Uribe \cite{Angel-Backelin-Uribe} to the integral homology.
Our product $\psi$ generalises two known constructions:
\begin{enumerate}
\item When $G$ is trivial, this reduces to the Chas-Sullivan string product for $LM$ \cite{CS,CJ}.
\item When $M=pt$, this reduces to the Chataur-Menichi pair-of-pants string product for $LBG$ \cite[Example (a)]{CM}.
\end{enumerate}
\item
More generally, let $E\to M_G$ be a fibrewise monoid.
Take $X = Y = E$ with the projection maps to $M_G$. The composition of $\mu_{M_G}$ with the fibrewise multiplication gives a product
\[
 h_k(E) \otimes h_l(E) \to h_{k+l+\dim(G)-\dim(M)}(E),
\]
which generalises Gruher-Salvatore's construction \cite{GS}.
\item
For the homotopy pullback diagram
\[
\xymatrix{
K_G \ar@{=}[d] \ar[r] & M_G \ar@{=}[d] \\
K_G \ar[r] & M_G,
}
\]
we have a pairing
\[
h_k(K_G) \otimes h_l(M_G) \to h_{k+l+\dim(G)-\dim(M)}(K_G).
\]
\end{enumerate}
\medskip

The corresponding examples of the secondary product $\overline{\mu_{M_G}}$ are given as follows.
\begin{enumerate}[(i)]
\item In the case corresponding to (1), we obtain a secondary exterior product in the Borel equivariant homology
\[
\overline{\mu_{BG}}: H^G_k(K;R)\otimes H^G_l(L;R) \to H^G_{k+l+\dim(G)+1}(K\times L;R)
\]
for $k>\dim_R(K)-\dim(G)$ and $l>\dim_R(L)-\dim(G)$.
\item In the case corresponding to (2), we obtain a secondary equivariant intersection product
\[
\overline{\inprod}: H^G_k(M;R)\otimes H^G_l(M;R) \to H^G_{k+l+\dim(G)-\dim(M)+1}(M;R)
\]
for $k,l>\dim_R(M)-\dim(G)$. 
When $M$ is a point, 
this coincides (Proposition \ref{prop-appendix}) with the product in the integral homology of $BG$ defined in an unpublished work by Kreck
 (see Appendix \ref{sec:stratifold} and \cite{K}). 
The Kreck product was described in a geometric way using stratifolds and stratifold homology.
When $G$ is finite,
the product was shown in \cite{T1} to coincide with the cup product
in negative Tate cohomology, under the identification 
$H_{k}(BG;\mathbb{Z})\xrightarrow{\cong}\widehat{H}^{-k-1}(G;\mathbb{Z})$ for $k>0$. 

Another product of the same grading 
can be defined by the tools developed in \cite{G-M} using equivariant
Poincar\'e duality and the cup product in negative
Tate cohomology under an appropriate orientability condition.
In the case when $G$ is finite and $M=pt$, it agrees with ours. 
In general we do not know if the two agree, though we expect that it is the case.
\item In the case corresponding to (3)-(b), 
composing $\overline{\mu_{BG}}$ with the induced map of the concatenation, we obtain a  product
\[
\overline{\psi}: H_{k}(LBG;R)\otimes H_{l}(LBG;R)\to H_{k+l+\dim(G)+1}(LBG;R)
\]
for $k,l>0$.
This is a secondary product to Chataur-Menichi's string product.
\item
In the case corresponding to (5), we have
\[
H^G_k(K;R) \otimes H^G_l(M;R) \to H^G_{k+l+\dim(G)-\dim(M)+1}(K;R)
\]
for $k>\dim_R(K)-\dim(G)$ and $l>\dim_R(M)-\dim(G)$.

\end{enumerate}

In \S \ref{sec:applications-secondary}, we prove some properties of the secondary product $\overline{\mu_{M_G}}$. 
In particular, we show in Theorem \ref{vanishing-general} a condition in which the secondary product vanishes.
On the other hand,
we describe several interesting examples in which $\overline{\mu_{M_G}}$ is non-trivial.
We show in Corollary \ref{ex:non-vanishing-secondary} that for every non-trivial compact Lie group $G$, 
there exists a manifold $M$ where the secondary intersection product $\overline{\mu_{M_G}}$ is non-trivial,
and we see in Proposition \ref{non-trivial-bar-psi} 
that $\overline{\psi}$ in the homology of $LBG$ is non-trivial in general.
In \S \ref{relation to the other construction}
we focus on the concrete computation of $\overline{\mu_{BG}}: H_k(BG;R)\otimes H_l(BG;R) \to H_{k+l+\dim(G)+1}(BG;R)$
 when all maps in diagram  \eqref{square} are the identity maps. In Propositions \ref{product-rank1} and \ref{computation-so3} we give a complete description
of the product in the case when $G$ is one of $S^{1},\SU(2)$ or $\SO(3)$.
We also prove the following two vanishing results:
The product in $H_*(BG;R)$ is trivial for 
\begin{itemize}
\item any compact Lie group $G$ of rank greater than one with $R=\Q$ (Proposition \ref{vanishing-rational}).
\item any connected compact classical (matrix) group of rank greater than one with $R=\Z$ (Proposition \ref{vanishing-classical}).
\end{itemize}
In the appendix, 
we show that the secondary intersection product generalises the Kreck product on $H_*(BG;\Z)$. 
\medskip

We also define a third product $\mu^{\ast}_{M_G}$ in \S \ref{sec:def-secondary}
which is related to both $\mu_{M_G}$ and $\overline{\mu_{M_G}}$. 
Consider the homotopy pushout of the upper left corner of diagram \eqref{square}
\begin{equation}\label{homotopy-join}
\xymatrix{
 P \ar[d] \ar[r] & X \ar[d]\\
 Y \ar[r] & X\bowtie_{M_G}Y.
}
\end{equation}
The space $X\bowtie_{M_G}Y$ is called the {\em homotopy join}  \cite{Do}.
\begin{theorem}[Theorem \ref{def-of-join}]
There exists a homomorphism 
\[
\mu^{\ast}_{M_G}: H_k(X;R)\otimes H_l(Y;R) \to H_{k+l+\dim(G)-\dim(M)+1}(X\bowtie_{M_G}Y;R)
\]
when $\dim(G)>0$ or $k,l>0$.
It satisfies
\[
\partial \circ \mu^{\ast}_{M_G} = \mu_{M_G}, \quad
u_* \circ \overline{\mu_{M_G}}=\mu^{\ast}_{M_G}
\]
where $\partial$ is the boundary operator in the Mayer-Vietoris sequence associated to the homotopy pushout,
and $u$ is the composition $P\to X\to X\bowtie_{M_G}Y$.
\end{theorem}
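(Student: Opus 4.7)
The plan is to lift $\mu_{M_G}$ along the Mayer--Vietoris connecting homomorphism $\partial\colon H_{*+1}(X\bowtie_{M_G}Y;R)\to H_{*}(P;R)$ associated to \eqref{homotopy-join}, and then to identify this lift with the pushforward of $\overline{\mu_{M_G}}$ under $u$. For a lift to exist, both pushforwards $(i_X)_*\mu_{M_G}(x,y)$ and $(i_Y)_*\mu_{M_G}(x,y)$ must vanish. I would prove the first by applying the naturality of $\mu_{M_G}$ with respect to the map of diagrams over $M_G$ that keeps $X$ fixed and collapses the $Y$-corner along $g\colon Y\to M_G$: the target pullback degenerates to $X$, the induced map on pullbacks is precisely $i_X\colon P\to X$, and naturality identifies $(i_X)_*\mu_{M_G}(x,y)$ with the primary product for the degenerate diagram applied to $(x, g_*y)$. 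Since the homotopy fibre of $M_G\to BG$ is $M$, the vanishing theorem (Theorem B) applies under $l>\dim_R(M)-\dim(G)$ and forces this class to vanish. The symmetric argument, collapsing the $X$-corner along $f$, handles the other direction under $k>\dim_R(M)-\dim(G)$.

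Given the vanishing of both pushforwards, Mayer--Vietoris exactness provides a lift; to make it canonical I would refine the umkehr construction of \S\ref{sec:primary} to the chain (or spectrum) level. This yields a specific cycle representative $\widetilde\mu_{M_G}(x,y)\in C_{k+l+d}(P;R)$, with $d=\dim(G)-\dim(M)$, while the chain-level implementation of the naturality argument above produces canonical bounding chains $b_X\in C_{k+l+d+1}(X;R)$ and $b_Y\in C_{k+l+d+1}(Y;R)$ whose boundaries equal, up to sign, the pushforwards of $\widetilde\mu_{M_G}$ into $X$ and $Y$. In the double mapping cylinder model of $X\bowtie_{M_G}Y$, these assemble with the cylinder $\widetilde\mu_{M_G}\times[0,1]$ into a cycle representing $\mu^*_{M_G}(x,y)$, and the identity $\partial\mu^*_{M_G}=\mu_{M_G}$ is then built into the Mayer--Vietoris boundary. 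For the identity $u_*\circ\overline{\mu_{M_G}}=\mu^*_{M_G}$ under the stronger hypothesis of Theorem C, note that $\overline{\mu_{M_G}}(x,y)$ is represented by a chain inside $P$ coming from a null-witness for $\widetilde\mu_{M_G}$; pushing this along the factorisation $u\colon P=P\times\{0\}\hookrightarrow P\times[0,1]\to X\bowtie_{M_G}Y$ produces a cycle chain-homotopic to the one above, giving the equality in homology. Naturality with respect to maps of diagrams over $M_G$ is then inherited from that of the primary product together with the functoriality of the Mayer--Vietoris sequence.

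The main obstacle is showing that $\mu^*_{M_G}$ is well-defined on homology, independent of the chain-level choices of $b_X$ and $b_Y$. A priori the indeterminacy in $\mu^*_{M_G}(x,y)$ lies in the image of $H_{k+l+d+1}(X;R)\oplus H_{k+l+d+1}(Y;R)\to H_{k+l+d+1}(X\bowtie_{M_G}Y;R)$ and is not obviously trivial. Controlling it requires that both the umkehr construction and the null-chains coming from the naturality argument be chosen functorially enough that different admissible choices differ by chains becoming null under the pushout identifications. This canonicity should follow from a careful spectrum-level or stratifold-level implementation of the Thom collapse underlying $\mu_{M_G}$ in \S\ref{sec:primary}, and once it is in place the identity $u_*\circ\overline{\mu_{M_G}}=\mu^*_{M_G}$ on the common domain serves as a useful internal consistency check between the two secondary constructions.
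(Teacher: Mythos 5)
Your first step is exactly the right observation: using naturality of $\mu_{M_G}$ over the fixed base $M_G$ (Proposition \ref{primary-property}(3)) to identify $(i_X)_*\mu_{M_G}(x,y)$ with the primary product for the degenerate diagram in which $Y$ is replaced by $M_G$ itself, and then invoking Theorem \ref{vanishing} (with $L=M$) under $l>\dim_R(M)-\dim(G)$; the symmetric argument with $k$ handles the other projection. This correctly explains the degree condition in the statement and shows that $\mu_{M_G}(x,y)$ lies in the image of the Mayer--Vietoris boundary. But the core of the theorem is producing a \emph{canonical} lift, and here you have a genuine gap which you yourself flag: the indeterminacy lives in the image of $H_{k+l+d+1}(X;R)\oplus H_{k+l+d+1}(Y;R)$, and appealing to an unspecified ``chain-level or spectrum-level'' refinement of the umkehr maps is not a proof. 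In particular, your claim that the naturality argument ``produces canonical bounding chains $b_X$ and $b_Y$'' does not follow: the vanishing in Theorem \ref{vanishing} is obtained by a dimension count through a CW-skeleton (Lemma \ref{vanishing lemma}), not by exhibiting explicit null-homotopies, so there is no canonical chain-level witness handed to you.

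The device the paper uses to obtain canonicity is the same one it already used for $\overline{\mu_{M_G}}$, and it is worth spelling out rather than deferring to an unbuilt chain model. Represent $\alpha\in H_k(X;R)$ and $\beta\in H_l(Y;R)$ by maps $f\colon S\to X$, $g\colon T\to Y$ from CW complexes of dimensions exactly $k$ and $l$; form the pullback $W'=S\times^h_{M_G}T$ and its homotopy pushout $S\bowtie_{M_G}T$, together with the whisker map $S\bowtie_{M_G}T\to X\bowtie_{M_G}Y$. The class $\mu_{M_G}(\bar\alpha,\bar\beta)\in H_{k+l+d}(W';R)$ admits a Mayer--Vietoris lift into $H_{k+l+d+1}(S\bowtie_{M_G}T;R)$ controlled by $H_{k+l+d+1}(S;R)\oplus H_{k+l+d+1}(T;R)$, which is governed by the dimensions of $S$ and $T$ rather than those of the (possibly infinite-dimensional) spaces $X$ and $Y$; pushing forward along the whisker map defines $\mu^\ast_{M_G}(\alpha,\beta)$. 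Independence of the choice of $(S,\bar\alpha)$ and $(T,\bar\beta)$ then follows by the same comparison with a $(k+1)$-skeleton as in Lemma \ref{well-definedness}, and the identities $\partial\circ\mu^\ast_{M_G}=\mu_{M_G}$ and $u_*\circ\overline{\mu_{M_G}}=\mu^\ast_{M_G}$ fall out of the naturality of the Mayer--Vietoris sequences for the various pushouts, exactly as in the comparison arguments already carried out for $\overline{\mu_{M_G}}$.
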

When $M=pt$ and $G$ is trivial, $\mu^{\ast}_{M_G}$ reduces to the homomorphism 
$H_k(X;R)\otimes H_l(Y;R) \to H_{k+l+1}(X*Y;R)$
defined in \cite[\S 2]{Wh}.

\medskip

We finish the introduction with a remark on the homotopy pullback diagram
\eqref{square}, which looks somewhat artificial to have $M_G$ as its base space.
In fact, the diagram \eqref{square} can be viewed in a different but equivalent way.
Given a map $X\to M_G$, consider the pullback of the universal bundle
$G\to EG\to BG$ via the composition $X\to M_G\xrightarrow{m} BG$.
Denote the total space by $K$, then we have a homotopy commutative diagram
\[
\xymatrix{
K \ar[r]^{f} \ar[d] & M \ar[r] \ar[d] & EG\ar[d] \\
K_G \ar[r]^{f_G}& M_G \ar[r] &BG
}
\]
where $f_G$ is the Borel construction of the $G$-map $f$.
Note that $X\sim K_G$ and the map $X\to M_G$ is homotopic to $f_G$.
Similarly, identify $Y\to M_G$ with the Borel construction of a $G$-map $g: L\to M$.
Taking the homotopy fibre of each space in \eqref{square} mapping down to $BG$, we obtain a homotopy pullback
\begin{equation}\label{square-upstairs}
\xymatrix{
Z \ar[d] \ar[r] & K \ar[d]^{f} \\
L \ar[r]^{g} & M,
}
\end{equation}
where $Z_G\sim P$.
Therefore, the diagram \eqref{square} can always be considered to have the form
\begin{equation}\label{equivariant-square}
\xymatrix{
Z_G \ar[d] \ar[r] & K_G \ar[d]^{f_G} \\
L_G \ar[r]^{g_G} & M_G,
}
\end{equation}
where $g_G$ and $f_G$ are the Borel constructions of some equivariant maps $f:K\to M$ and $g:L\to M$.
Conversely, the homotopy pullback \eqref{square-upstairs} consisting of equivariant maps
gives rise to the homotopy commutative diagram \eqref{equivariant-square} by taking the Borel construction.
We see that it is in fact a homotopy pullback by the five lemma.
We freely switch between the two equivalent diagrams in this paper.

\subsection*{Acknowledgements}
The authors would like to thank Matthias Kreck for helpful discussions, Peter Teichner for suggesting 
Corollary \ref{ex:non-vanishing-secondary},
and the anonymous referee for the careful reading and the precious comments.
We are grateful to the university of Bonn and the Mathematics Center Heidelberg (MATCH) for financial support during a part of this project.

\section{The construction of the product $\mu_{M_G}$}\label{sec:primary}
Throughout this paper,
we work in the category of compactly generated weak Hausdorff spaces.
Fibrations and cofibrations are in the sense of Hurewicz.

The construction of $\mu_{M_G}$ depends on umkehr (or Gysin) maps in homology of two kinds: 
one with respect to fibre bundles and the other with respect to cofibrations. 
We review these umkehr maps in the following subsections.

\subsection{The Grothendieck bundle transfer}\label{sec:grothendieck-bundle-transfer}(\cite[\S 6 (h), Chap. V]{B})
There are different ways to define umkehr maps with respect to ``nice" fibrations. 
Here we adopt Boardman's approach, called 
the {\em Grothendieck bundle transfer} \cite[\S 6 (h), Chap. V]{B} which has the advantage of allowing
arbitrary homology theories and non-connected fibres.
Note that the integration along the fibre \cite[\S 6 (e), Chap. V]{B} was used, for example, in \cite{CM}.
The integration along the fibre is defined under different conditions;
only for ordinary homology but with a larger class of fibres.
The Grothendieck bundle transfer coincides with it when both are defined. 

We briefly recall the definition of the Grothendieck bundle transfer. We refer the reader to \cite{BG,B} for details.
Let $G$ be a compact Lie group acting smoothly on a closed oriented manifold $F$.
Let $G\to E' \xrightarrow{p'} B$ be a principal $G$-bundle and 
$F\to E \xrightarrow{p} B$ be the associated $F$-bundle.
The bundle of tangents along the fibre is defined to be the vector bundle $t(p): E'\times_G TF\to E'\times_G F=E$,
where $TF$ is the tangent bundle of $F$.
There exists a fibrewise embedding $j:E \hookrightarrow U$ for some vector bundle $u: U\to B$. 
Then, by the Pontryagin-Thom construction one obtains a map between the Thom spaces 
$B^u \to E^\nu$, where $\nu$ is the normal bundle of $j$.
This, together with the fact $p^*(u)=t(p)\oplus \nu$, defines a map between the Thom spectra
$B^0 \to E^{-t(p)}$.
When an $h^*$-orientation of $-t(p)$ is specified,
it induces a homomorphism
\[
p^\natural: h_k(B) \to h_{k+\dim(F)}(E).
\]
It is easy to see the homomorphism is natural with respect to pullbacks:
for a pullback diagram
\[
\xymatrix{
\hat{E} \ar[r]^{\hat{p}} \ar[d]^{\hat{f}}  & \hat{B}\ar[d]^f \\
E \ar[r]^p & B,
}
\]
we have $\hat{f}_* \circ \hat{p}^\natural=  p^\natural\circ f_*$.

\subsection{The umkehr map with respect to a cofibration}\label{umkehr-cofibration}
(compare \cite{GS})
Let $i:A\to X$ be a cofibration. For simplicity, we assume that $A$ is a subspace of $X$. 
Fix a cohomology class $\alpha \in h^d(X,X \setminus A)$,
which we call an {\em orientation} for $i$. A typical case is when $i$ is an embedding of manifolds and $\alpha$ is an orientation class of its normal bundle.
We define an umkehr map $i^!: h_k(X) \to h_{k-d}(A)$ as follows.
For every open neighbourhood $U$  of $A$ one has the restriction homomorphism $i^!_U:h_k(X)\to  h_{k-d}(U)$: 
\[
h_k(X)\to h_k(X,X \setminus A)\cong h_k(U,U\setminus A)
\xrightarrow{\cap \alpha} h_{k-d}(U),
\]
where $\cap \alpha$ denotes the cap product with $\alpha$.
If $U'\subseteq U$ is another open neighbourhood of $A$, by naturality of the cap product we have $i^{!}_{U}=j_* \circ i^{!}_{U'}$, where $j_*$ is the map induced by the inclusion $j: U'\to U$. This implies that there exists a homomorphism $h_k(X)\to \varprojlim h_{k-d}(U)$, where the inverse limit is taken over a fundamental system of neighbourhoods. By \cite[Corollary 2]{H}, the inclusion of $A$ in its neighbourhoods induces an isomorphism $h_{*}(A)\to \varprojlim h_{*}(U)$.
By composing its inverse with the above homomorphism, we obtain the desired umkehr map $h_k(X)\to \varprojlim h_{k-d}(U) \simeq h_{k-d}(A)$.

This construction is natural in the following sense:
\begin{lem}
Suppose we are given the following topological pullback diagram
\begin{equation}\label{def-umkehr}
\xymatrix{
\hat{A} \ar[r]^{\hat{i}} \ar[d]^{\hat{f}}  & \hat{X}\ar[d]^f \\
A \ar[r]^i & X.
}
\end{equation}
If the horizontal maps are cofibrations, 
the umkehr maps with respect to a class $\alpha \in h^d(X,X \setminus A)$ and 
its pullback $f^*(\alpha)\in h^d(\hat{X},\hat{X} \setminus \hat{A})$ satisfy $\hat{f}_* \circ \hat{i}^!=i^! \circ f_*$.
\end{lem}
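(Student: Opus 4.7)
The plan is to verify the identity at the level of each open neighbourhood and then pass to the inverse limit. Since diagram \eqref{def-umkehr} is a topological pullback, $\hat{A}=f^{-1}(A)$, so for every open neighbourhood $U$ of $A$ in $X$ the preimage $\hat{U}:=f^{-1}(U)$ is an open neighbourhood of $\hat{A}$ in $\hat{X}$, and $f$ restricts to a map of pairs $(\hat U,\hat U\setminus \hat A)\to(U,U\setminus A)$.

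First I would check that the diagram
\[
\xymatrix{
h_k(\hat X)\ar[d]_{f_*}\ar[r] & h_k(\hat X,\hat X\setminus \hat A)\ar[d]_{f_*} & h_k(\hat U,\hat U\setminus \hat A)\ar[l]_-{\cong}\ar[d]_{f_*}\ar[r]^-{\cap f^*\varphi} & h_{k-d}(\hat U)\ar[d]^{f_*}\\
h_k(X)\ar[r] & h_k(X,X\setminus A) & h_k(U,U\setminus A)\ar[l]_-{\cong}\ar[r]^-{\cap \varphi} & h_{k-d}(U)
}
\]
commutes for every $U$. The first square commutes by naturality of the long exact sequence of a pair, the second by naturality of excision, and the third by the projection formula $f_*(\alpha\cap f^*\varphi)=f_*(\alpha)\cap \varphi$ for the cap product, which holds in any multiplicative generalised cohomology theory. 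Since the composition of the top row is by definition $\hat i^{\,!}_{\hat U}$ and of the bottom row is $i^!_U$, this yields $f_*\circ \hat i^{\,!}_{\hat U}=i^!_U\circ f_*$ for every $U$.

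Next I would pass to the limit. By \cite{H} Corollary~2, the canonical maps $h_*(A)\to\varprojlim_U h_*(U)$ and $h_*(\hat A)\to\varprojlim_{\hat U}h_*(\hat U)$ are isomorphisms, and by construction $(\iota_U)_*\circ i^!=i^!_U$ and $(\iota_{\hat U})_*\circ\hat i^{\,!}=\hat i^{\,!}_{\hat U}$, where $\iota_U\colon A\hookrightarrow U$ and $\iota_{\hat U}\colon \hat A\hookrightarrow \hat U$. For every $U$ one then has
\[
(\iota_U)_*\circ f_*\circ\hat i^{\,!}=f_*\circ(\iota_{\hat U})_*\circ\hat i^{\,!}=f_*\circ \hat i^{\,!}_{\hat U}=i^!_U\circ f_*=(\iota_U)_*\circ i^!\circ f_*,
\]
and the universal property of the inverse limit over the neighbourhoods of $A$ forces $f_*\circ\hat i^{\,!}=i^!\circ f_*$. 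The only non-formal step is the projection formula for the cap product; everything else is a book-keeping exercise in the naturality of the long exact sequence, excision, and the inverse limit, so I expect the main obstacle to be simply making sure the projection formula is available in the chosen generalised cohomology theory.
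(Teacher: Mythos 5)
Your proof is correct and follows essentially the same route as the paper: establish $f_*\circ\hat i^!_{\hat U}=i^!_U\circ f_*$ at each neighbourhood $U$ (via naturality of the pair sequence, excision, and the projection formula for the cap product — the last of which the paper simply calls "naturality of the cap product"), then push the identity through the natural isomorphism $h_*(A)\cong\varprojlim_U h_*(U)$ from Holm's result. The only cosmetic difference is that you unpack the commutativity into three explicit squares, whereas the paper compresses this, but the logical content is identical.
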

\begin{proof}
Since ${f}^{-1}(A)=\hat{A}$ we have a map of pairs 
$(\hat{X},\hat{X} \setminus \hat{A}) \to (X,X \setminus A)$, hence $f^*(\alpha)\in h^d(\hat{X},\hat{X} \setminus \hat{A})$. Let $U$ be an open neighbourhood of $A$ and $\hat{U}=f^{-1}(U)$. By the naturality of the cap product, we have
\[
f_* \circ \hat{i}^!_{\hat{U}}=i_U^! \circ f_*.
\]
The statement follows from the fact that this is true for every neighbourhood $U$ of $A$ and $h_{k-d}(A)\simeq \varprojlim h_{k-d}(U)$.
\end{proof}

Let $i:A\to X$ be a cofibration with an orientation $\alpha \in h^d(X,X \setminus A)$
and $f: \hat{X}\to X$ be a map.
We would like to define the umkehr map $\hat{i}^!: h_k(X) \to h_{k-d}(A)$,
where
\[
\xymatrix{
\hat{A} \ar[r]^{\hat{i}} \ar[d]^{\hat{f}}  & \hat{X}\ar[d]^{f} \\
A \ar[r]^i & X
}
\]
is a homotopy pullback.
First, we factor $f$ as the composition of a weak equivalence $u:\hat{X}\to \tilde{X}$ together with a fibration $g:\tilde{X}\to X$. Consider the following diagram:
\[
\xymatrix{
\hat{A} \ar[r]^{\hat{i}} \ar[d]^{\hat{u}}  & \hat{X}\ar[d]^{u} \\
\tilde{A} \ar[r]^{\tilde{i}} \ar[d]^{\hat{g}}  & \tilde{X}\ar[d]^{g} \\
A \ar[r]^i & X.
}
\]
where the bottom square is a topological pullback (which is also a homotopy pullback) and the map $\hat{u}$ is a whisker map, which is a weak equivalence.
Since the pullback of a cofibration along a fibration is 
again a cofibration \cite[Theorem 14.1]{Strom}, $\tilde{i}$ is a cofibration.
 Let $\tilde{i}^!$ be the umkehr map with respect to $g^{*}(\alpha)$. Then we define
\[
\hat{i}^!=\hat{u}^{-1}_{*} \circ \tilde{i}^! \circ u_{*}.
\]
By the previous Lemma, we see the umkehr map has the naturality $\hat{f}_* \circ \hat{i}^!=i^! \circ f_*$.
It is easy to see that this definition does not depend on the choice of the factorisation of $f$.



\subsection{Relation between the two umkehr maps}\label{two-umkehr-maps}
When all maps involved are smooth maps between finite closed manifolds, 
the two kinds of umkehr maps are both identified with the {\em Grothendieck transfer} \cite[\S 6 (d), Chap. V]{B}.
Let $f:N \to M$ be a smooth map between closed manifolds with an $h$-orientation of its stable normal bundle $f^*(TM)-TN$.
Choose a smooth embedding $\iota:N\to \R^n$ for some large $n$, and 
consider the embedding $f\times \iota:N \to M\times \R^n$.
We identify its normal bundle $\nu$ with the tubular neighbourhood of $N$.
Take the class $\alpha \in h^d\big ( M\times \R^n, M\times \R^n \setminus (f\times \iota)(N)\big)$ given by the inverse image of the Thom class 
by the excision isomorphism $h^*(M\times \R^n,M\times \R^n\setminus (f\times \iota)(N))\simeq h^*(\nu,\nu\setminus N)$.
Applying the construction in \S \ref{umkehr-cofibration} to the cofibration $f\times \iota$, 
we obtain a homomorphism $(f\times \iota)^!: h_k(M) \simeq h_k(M\times \R^n)\to h_{k-\dim(M)+\dim(N)}(N)$.
This construction, which is independent of the choice of $\iota$, is called the Grothendieck transfer in \cite[\S 6 (d), Chap. V]{B}. 
We denote it, by abuse of notation, by $f^!$ as well.

Assume in addition that $f$ is the projection of a fibre bundle with which the Grothendieck bundle transfer 
(\S \ref{sec:grothendieck-bundle-transfer}) is applicable.
An $h$-orientation of $f^*(TM)-TN$ amounts to an $h$-orientation of $-t(f)$ (see \cite{Dold} page 39). 
Under this correspondence $f^!$ and $f^\natural$ coincide up to sign by \cite[Theorem 6.24, Chap. V]{B}. 

\subsection{The definition of $\mu_{M_G}$}
We are now ready to define the primary product $\mu_{M_G}: h_k(X)\otimes h_l(Y) \to h_{k+l+\dim(G)-\dim(M)}(P)$
for the diagram \eqref{square}.
Observe that diagram \eqref{square}
gives rise to another homotopy pullback diagram
\[
\xymatrix{
P \ar[r] \ar[d] & M_G \ar[d]^{\Delta_{M_G}} \\
X\times Y \ar[r]^(0.42){f\times g} & M_G \times M_G,
}
\]
where $\Delta_{M_G}$ is the diagonal.
Recall that the Chas-Sullivan product is defined using 
the umkehr map with respect to the finite codimensional embedding $\Delta_M: M\to M\times M$
and the Chataur-Menichi product is defined using 
the integration along the fibre with respect to the fibration $\Delta_{BG}: BG \to BG\times BG$ whose fibre 
is the compact manifold $G$.
In contrast to these situations, in general neither $\Delta_{M_G}: M_G\to M_G\times M_G$
is finite codimensional nor its fibre $\Omega (M_G)$ is compact.
Thus, we cannot just apply the umkehr maps reviewed at the beginning of this section.
Denote the diagonal subgroup of $G\times G$ by $\Delta G$, which acts on $M\times M$.
The key observation is that one can decompose $\Delta_{M_G}$ as
\begin{equation}\label{key-decomp}
M_G \xrightarrow{\Delta_G} (M\times M)_{\Delta(G)} \xrightarrow{p_G} M_G \times M_G,
\end{equation}
where $\Delta_G$ is the equivariant diagonal and 
$p$ is obtained by further quotienting by $G\times G$.
Then, $\Delta_G$ is finite codimensional and 
$p_G$ has the fibre $(G\times G)/\Delta G$ which is homeomorphic to $G$.
Note that $p_G$ is the pullback of 
 the homogeneous bundle
 $(G\times G)/\Delta G \to (EG\times EG)_{\Delta(G)} \xrightarrow{p_G} BG \times BG$
with the structure group $G\times G$
along the direct product of the classifying maps $m\times m: M_G \times M_G \to BG \times BG$.
We identify $(EG\times EG)_{\Delta(G)}$ with $BG$,
and under this identification, the diagonal $\Delta_{BG}:BG \to BG\times BG$ is homotopic to $p_G$.
In what follows, we denote $\Delta G$ simply by $G$ when it causes no confusion.


Given diagram \eqref{square}, let $Q$ be the homotopy pullback:
\[
\xymatrix{
 Q \ar[r] \ar[d] & X \ar[d] \\
 Y \ar[r] & BG,
 }
\]
where $X\to M_G \to BG$ and $Y\to M_G \to BG$ are
compositions with the classifying map $m: M_G\to BG$.
Since $p_G$ is the homotopy pullback of $\Delta_{BG}$ via $m\times m$,
we have the following diagram with all the squares being homotopy pullbacks:
\begin{equation}\label{defining-diagram-of-mu}
\xymatrix{
P \ar[d] \ar[r]^{\hat{\Delta}_G} & Q \ar[r]^{\hat{p}_G} \ar[d] & X\times Y  \ar[d]^{} \\
M_G \ar[r]^(0.42){\Delta_G} & (M\times M)_{G}  \ar[r]^{p_G} \ar[d] & M_G \times M_G \ar[d]^{m\times m} \\
& BG \ar[r]^{p_G} & BG\times BG.
}
\end{equation}
We would like to define an $h_*(pt)$-module homomorphism $\mu_{M_G}$ by the following composition:
\[
h_k(X)\otimes h_l(Y) \xrightarrow{\times} h_{k+l}(X\times Y)\xrightarrow{\hat{p}_G^\natural} h_{k+l+\dim(G)}(Q)\xrightarrow{\hat{\Delta}_G^!}h_{k+l+\dim(G)-\dim(M)}(P).
\]
In order to define the umkehr maps  $\hat{p}_G^\natural$ and $\hat{\Delta}_G^!$
using \S \ref{sec:grothendieck-bundle-transfer} and \S \ref{umkehr-cofibration},
 the bundle of tangents along the fibre for $\hat{p}_G$ and the normal bundle of $\Delta_G$ 
should be oriented.

\begin{lem} \label{orientability}
\begin{enumerate}
\item The normal bundle of $\Delta_G$ is canonically isomorphic to the vector bundle
$(TM)_G:EG\times_G TM \to EG\times_G M=M_G$.
\item
The bundle of tangents along the fibre $t(p_G)$ for $p_G: BG\to BG\times BG$ is 
canonically isomorphic to the universal adjoint bundle $\mathrm{ad}(EG):\mathfrak{g} \to EG \times_G \mathfrak{g} \to BG$,
where $G$ acts on its Lie algebra $\mathfrak{g}$ by the adjoint action.
\end{enumerate}
\end{lem}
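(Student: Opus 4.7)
The plan is to treat both parts by first working with the relevant bundles $G$-equivariantly downstairs on $M$ (respectively at a point) and then applying the Borel construction functor. Both statements assert that a bundle over $M_G$ (respectively $BG$) is the Borel construction of a canonical $G$-equivariant bundle, so the problem reduces to explicit equivariant differential-geometric identifications.

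For (1), I would view $\Delta_G$ as the Borel construction of the $G$-equivariant diagonal embedding $\Delta\colon M \to M \times M$, where $G$ acts diagonally on the target. This is a closed equivariant smooth embedding, so the tangent exact sequence $0 \to TM \xrightarrow{d\Delta} \Delta^\ast T(M\times M) \to \nu_\Delta \to 0$ is a sequence of $G$-equivariant vector bundles on $M$. The canonical splitting $T(M\times M)|_{\Delta(M)} \cong TM \oplus TM$ gives a canonical $G$-equivariant isomorphism $\nu_\Delta \cong TM$ with its standard $G$-action. Since the Borel construction is functorial on $G$-equivariant vector bundles and takes equivariant tubular neighbourhoods to tubular neighbourhoods, applying it yields $\nu_{\Delta_G} \cong EG\times_G TM = (TM)_G$.

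For (2), I would use the presentation of $\Delta_{BG}$ recalled in the text as the fibre bundle associated to the principal $(G\times G)$-bundle $EG\times EG \to BG\times BG$ with fibre $F=(G\times G)/\Delta G$. By definition $t(\Delta_{BG}) = (EG\times EG)\times_{G\times G} TF$. The tangent bundle of the homogeneous space $F$ is $(G\times G)$-equivariantly isomorphic to $(G\times G)\times_{\Delta G} T_{[e]}F$, and the tangent space at the identity coset is $(\mathfrak{g}\oplus\mathfrak{g})/\Delta\mathfrak{g}$ with the isotropy representation of $\Delta G$. The map $(X,Y)\mapsto X-Y$ is a $\Delta G$-equivariant isomorphism $(\mathfrak{g}\oplus\mathfrak{g})/\Delta\mathfrak{g} \xrightarrow{\cong} \mathfrak{g}$, where the right-hand side carries the adjoint representation. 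Combining with the collapse $(EG\times EG)\times_{G\times G}(G\times G) \cong EG\times EG$ gives $t(\Delta_{BG}) \cong (EG\times EG)\times_{\Delta G}\mathfrak{g}$, and projection onto the first $EG$-factor furnishes an isomorphism of vector bundles to $EG\times_G\mathfrak{g} = \mathrm{ad}(EG)$ covering the canonical equivalence $(EG\times EG)/\Delta G \simeq BG$.

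The main care point is bookkeeping: keeping the left/right conventions straight and verifying that the identifications are canonical rather than merely existing. Concretely, one must check that the isotropy representation of $\Delta G$ on $(\mathfrak{g}\oplus\mathfrak{g})/\Delta\mathfrak{g}$ agrees with $\mathrm{Ad}$ under $(X,Y)\mapsto X-Y$, and that the resulting bundle isomorphism is compatible with the natural homotopy equivalence identifying the total space $(EG\times EG)/\Delta G$ of $\Delta_{BG}$ with $BG$. Once these checks are done, both statements follow from routine associated-bundle manipulations.
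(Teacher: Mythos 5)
Your proof is correct and follows essentially the same approach as the paper. Part (1) matches exactly: identify the normal bundle of the equivariant diagonal $\Delta\colon M\to M\times M$ with $TM$ $G$-equivariantly via the canonical splitting, then apply the Borel construction. For part (2) the content is the same but the packaging differs slightly: the paper trivialises $T\bigl((G\times G)/\Delta G\bigr)$ by first identifying $(G\times G)/\Delta G$ with $G$ via $(g,h)\mapsto gh^{-1}$ (with the sandwich $G\times G$-action) and then using $TG\simeq G\times\mathfrak{g}$, whereas you go directly to the isotropy representation on $T_{[e]}F=(\mathfrak{g}\oplus\mathfrak{g})/\Delta\mathfrak{g}$ and use the difference map $(X,Y)\mapsto X-Y$. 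Both land on the adjoint representation of $\Delta G$, and the final associated-bundle collapse $(EG\times EG)\times_{G\times G}\bigl((G\times G)\times_{\Delta G}\mathfrak{g}\bigr)\cong (EG\times EG)\times_{\Delta G}\mathfrak{g}\simeq EG\times_G\mathfrak{g}$ is the same in both; your route avoids having to trivialise $TG$, at the cost of spelling out the isotropy representation.
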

\begin{proof}
\begin{enumerate}
\item Observe that the bundle isomorphism given by the difference $(TM\oplus TM)/TM \to TM$
is $G$-equivariant, where $G$ acts diagonally on the source.
Thus, we identify ($G$-equivariantly) the normal bundle of $\Delta: M\to M\times M$ with $TM$.
The normal bundle of $EG\times M\to EG\times M\times M$ is then identified with
$EG \times TM \to EG\times M$. 
By taking quotient by $G$, we obtain a canonical isomorphism between the normal bundle of $\Delta_G$ and $(TM)_G$.
\item By definition, the bundle of tangents along the fibre for $p_G$ is 
\[
E(G\times G)\times_{G\times G} T((G\times G)/G) \to E(G\times G) \times_{G\times G} (G\times G)/G = BG.
\]
Observe that $(G\times G)/G \to G$ given by $(g,h) \mapsto gh^{-1}$ is 
$G\times G$-equivariant homeomorphism, where $G\times G$ acts on the target by the sandwich action.
Using the canonical isomorphism $TG\simeq G\times \mathfrak{g}$, 
this pullbacks to the $G\times G$-bundle isomorphism $T((G\times G)/G)\simeq ((G\times G)/G) \times \mathfrak{g}$,
where $\mathfrak{g}$ is acted by the sandwich action. In particular, the diagonal acts by adjunction.
Thus, we have the canonical bundle isomorphism
$E(G\times G)\times_{G\times G} T((G\times G)/G) \simeq EG \times_G \mathfrak{g}=ad(EG)$.
\end{enumerate}
\end{proof}

\begin{dfn} \label{orientation}
Suppose that $M$ is a smooth manifold together with a smooth action $\rho$ of a compact Lie group $G$. For $h$, a multiplicative generalised (co)homology theory, we say  that {\bf the triple $(M,G,\rho)$ is $h$-orientable (resp. oriented)} if the following two vector bundles, which appear in Lemma \ref{orientability}, are $h$-orientable (resp. oriented)
\begin{itemize}
\item $(TM)_G:EG\times_G TM \to EG\times_G M=M_G$, the Borel construction of the tangent bundle of $M$
\item $ad(EG): EG\times_G\mathfrak{g} \to BG$, the universal adjoint bundle of $G$.
\end{itemize}
\end{dfn}
By Lemma \ref{orientability}, we orient the bundle of tangents along the fibre $t(\hat{p}_G)$
 in \eqref{defining-diagram-of-mu}
by the pullback of the orientation of $ad(EG)$,
and the embedding $\Delta_G$ by that of $(TM)_G$.
\medskip

Note that this notion of orientability is generally stronger than the usual one:
\begin{lem}\label{triple-orientation}
An $h$-orientation of a triple induces an $h$-orientation of $M$ and $\mathfrak{g}$ such that 
the action of $G$ is orientation preserving, where $G$ acts on $\mathfrak{g}$ by adjunction.
When $h$ is the ordinary homology, the converse is also true.
\end{lem}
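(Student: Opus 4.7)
The plan is to reduce both directions to the general principle that an $H\mathbb{Z}$-orientation of a vector bundle over a connected base is detected by the triviality of the monodromy action of $\pi_1$ on the top exterior power of the fiber, combined with the fibration $M \to M_G \to BG$ and the contractibility of $EG$. The forward direction should work for any multiplicative $h$ satisfying the weak equivalence axiom, while the converse genuinely requires ordinary homology.

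For the forward direction, I would pull back $(TM)_G$ and $\mathrm{ad}(EG)$ along the $G$-equivariant quotient maps $EG \times M \to M_G$ and $EG \to BG$. The resulting bundles are $EG \times TM \to EG \times M$ and $EG \times \mathfrak{g} \to EG$. Since $EG$ is contractible, the projections $EG \times M \to M$ and $EG \to \mathrm{pt}$ are weak equivalences, and by the weak equivalence axiom the pulled-back $h$-orientations descend to $h$-orientations of $TM$ and of $\mathfrak{g}$. These are manifestly $G$-invariant because every step of the construction is $G$-equivariant, so they provide $h$-orientations of $M$ and of $\mathfrak{g}$ for which $G$ acts by orientation-preserving maps.

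For the converse, assuming $h = H\mathbb{Z}$ and $M$ connected (the general case reduces to this componentwise), I would use the exact sequence $\pi_1(M) \to \pi_1(M_G) \to \pi_0(G) \to 1$ obtained from the fibration $M \to M_G \to BG$. The monodromy of $(TM)_G$ restricted to the image of $\pi_1(M)$ coincides with the monodromy of $TM$, which is trivial since $M$ is oriented. The residual action of $\pi_0(G)$ on the top exterior power of a tangent space is trivial by the hypothesis that $G$ acts by orientation-preserving diffeomorphisms. Hence $(TM)_G$ is orientable. The analogous argument for $\mathrm{ad}(EG) \to BG$ reduces to noting that the monodromy $\pi_1(BG) = \pi_0(G) \to \{\pm 1\}$ is precisely the adjoint action of $G$ on the top exterior power of $\mathfrak{g}$, trivial by hypothesis.

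The main obstacle is simply ensuring the converse is not claimed beyond ordinary homology: for theories like $KO$, orientability requires additional structure not captured by monodromy, so the equivalence is genuinely special to $H\mathbb{Z}$. In the forward direction the work is largely bookkeeping, the essential inputs being the identifications of the bundles in Lemma \ref{orientability} together with the contractibility of $EG$.
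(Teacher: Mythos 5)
Your proof is correct, and the forward direction is essentially the paper's argument (pull back to the $G$-bundles $EG\times TM\to EG\times M$ and $EG\times\mathfrak{g}\to EG$, descend via contractibility of $EG$). For the converse, however, the paper takes a more direct route than your monodromy computation. Rather than invoking the fibration $M\to M_G\to BG$, the exact sequence $\pi_1(M)\to\pi_1(M_G)\to\pi_0(G)\to 1$, and an analysis of the orientation character of $\pi_1(M_G)$, the paper simply observes that $(TM)_G$ and $\mathrm{ad}(EG)$ are quotients of the equivariant bundles $EG\times TM$ and $EG\times\mathfrak g$ by \emph{free} $G$-actions, and that for ordinary homology an orientation is the same as a continuous choice of orientations of the fibres; a $G$-invariant such choice on the equivariant bundle descends immediately to an orientation of the quotient bundle. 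This avoids the care needed to make precise your ``residual action of $\pi_0(G)$ on the top exterior power of a tangent space'' (which is really a map between tangent spaces at different points, needing the orientation of $M$ to identify them consistently) and dispenses with the reduction to connected $M$. Both arguments of course rest on the same fact, that $H(-;R)$-orientability is detected by the orientation local system, and both correctly isolate this as the point where the converse fails for general $h$. One cosmetic point: the paper's converse is for ordinary homology with any commutative coefficient ring $R$, not only $H\mathbb Z$; the same reasoning applies, an $R$-orientation being a continuous choice of $R$-module generators of the local cohomology of the fibres.
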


\begin{proof}
Observe that the bundles $(TM)_G$ and $ad(EG)$ are quotients of the equivariant bundles $EG\times TM$ and $EG\times \mathfrak{g}$
 by the free $G$-actions.
Pulling back orientations of $(TM)_G$ and $ad(EG)$, we obtain invariant orientations of $TM$ and $\mathfrak{g}$.
Conversely, in the case of ordinary homology, 
an orientation of a vector bundle is equivalent to a continuous choice of orientations of the fibres,
hence,
an invariant orientation of an equivariant vector bundle with a free action
gives rise to an orientation of its quotient.
\end{proof}

To summarise, 
\begin{thm}\label{def-of-primary}
Let $M$ be a smooth manifold with a smooth action of
a compact Lie group $G$, such that the triple $(M,G,\rho)$ is oriented. Given the homotopy pullback \eqref{square},
the umkehr maps $\hat{\Delta}_G^!$ and $\hat{p}_G^\natural$ for \eqref{defining-diagram-of-mu}
are defined and induce an $h_*(pt)$-module homomorphism 
$\mu_{M_G}: h_k(X)\otimes h_l(Y) \to h_{k+l+\dim(G)-\dim(M)}(P)$.
Dually, we have a homomorphism in cohomology of the form $h^{m}(P)\to h^{m-dim(G)+dim(M)}(X\times Y)$.
\end{thm}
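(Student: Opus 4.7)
The plan is to execute the three-step composition outlined in the paragraph immediately preceding the theorem: first the exterior cross product, then the Grothendieck bundle transfer $\hat{p}^\natural$, then the cofibration umkehr map $\hat{\Delta}_G^!$. The substantive content is that each of the two umkehr maps is well defined under the orientation hypothesis on $(M,G,\rho)$; the construction then simply composes them and the degrees add up as claimed.

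First I would construct $\hat{\Delta}_G^!$. Diagram \eqref{defining-diagram-of-mu} exhibits $\hat{\Delta}_G:P\to Q$ as the homotopy pullback of the closed embedding $\Delta_G:M_G\to (M\times M)_G$. By Lemma \ref{orientability}(1) the normal bundle of $\Delta_G$ is $(TM)_G$, which is $h$-oriented by Definition \ref{orientation} and hence carries a Thom class $\tau\in h^{\dim(M)}((M\times M)_G,(M\times M)_G\setminus M_G)$. Invoking Remark \ref{umkehr map for homotopy pullback}, I would replace the map $Q\to (M\times M)_G$ by a fibration, pull back $\tau$ to obtain a Thom class for the pulled-back normal bundle, and apply the cofibration umkehr construction of \S \ref{sec:primary} to produce $\hat{\Delta}_G^!:h_k(Q)\to h_{k-\dim(M)}(P)$.

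Next I would construct $\hat{p}^\natural$. Since $p$ is the homotopy pullback of $\Delta_{BG}$ along $m\times m$ and $\hat{p}$ is in turn the pullback of $p$ along $X\times Y\to M_G\times M_G$, the map $\hat{p}$ is a fibre bundle with fibre $(G\times G)/\Delta G\simeq G$. The bundle of tangents along the fibre of $\Delta_{BG}$ is the universal adjoint bundle $\mathrm{ad}(EG)$ by Lemma \ref{orientability}(2), which is $h$-oriented by Definition \ref{orientation}. Pulling this orientation back successively to $t(p)$ and then to $t(\hat{p})$ provides the input needed for the Grothendieck bundle transfer of \cite{B}, and yields $\hat{p}^\natural:h_{k+l}(X\times Y)\to h_{k+l+\dim(G)}(Q)$. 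Composing with the cross product $h_k(X)\otimes h_l(Y)\to h_{k+l}(X\times Y)$ and then with $\hat{\Delta}_G^!$ produces $\mu_{M_G}$ in total degree $k+l+\dim(G)-\dim(M)$, as claimed.

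The dual cohomological statement follows from the same ingredients with each umkehr map reversed. The closed embedding $\hat{\Delta}_G$ admits a Gysin pushforward $(\hat{\Delta}_G)_!:h^m(P)\to h^{m+\dim(M)}(Q)$ obtained from the pulled-back Thom class via the Thom isomorphism followed by the map of pairs, and the oriented bundle $\hat{p}$ with compact fibre $G$ admits integration along the fibre $\hat{p}_!:h^{m+\dim(M)}(Q)\to h^{m+\dim(M)-\dim(G)}(X\times Y)$. Composing these gives the desired map $h^m(P)\to h^{m-\dim(G)+\dim(M)}(X\times Y)$. The only point that genuinely requires checking is that the two global orientation conditions packaged into Definition \ref{orientation} really do suffice to orient the specific bundles over $P$ and $Q$ that the umkehr constructions demand, and this has already been handled by Lemma \ref{orientability} together with the naturality of Thom classes under pullback.
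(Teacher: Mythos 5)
Your proposal is correct and matches the paper's construction essentially verbatim: the same factorisation of $\Delta_{M_G}$ into the finite-codimensional equivariant diagonal $\Delta_G$ followed by the $G$-bundle $p$, the same appeal to Lemma \ref{orientability} and Definition \ref{orientation} to orient the normal bundle of $\Delta_G$ and the bundle of tangents along the fibre of $\hat p$, the same composition of cross product, $\hat p^\natural$, and $\hat\Delta_G^!$, and the same dualisation via Thom isomorphism and integration along the fibre. Nothing is missing.
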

\begin{rem}
{In \cite{FT}, F\'elix and Thomas define a homomorphism similar to our $\mu_{M_G}$
when the base space in \eqref{square} is a Gorenstein space.}
The Borel construction $M_G$ is a typical example of Gorenstein spaces.
They work algebraically on the level of the singular chain with coefficients in a field.
In contrast, our construction is topological and allows us to work with any generalised homology theory.
Also, our construction reveals the vanishing phenomenon Theorem \ref{vanishing} in a clear way.
(Compare with \cite[Theorem 14]{FT}.)
\end{rem}

Several examples related to equivariant homology and string topology are given in the introduction.
The following example can be derived similarly.
\begin{ex}
Let $(M,G,\rho)$ be an $h$-oriented triple.
Consider the following homotopy pullback
\[
\xymatrix{
\mathrm{Map}(S^1\vee S^1,M_G) \ar[r] \ar[d] & L(M_G) \ar[d]^{(ev_0,ev_{1/2})} \\
M_G \ar[r]^{\Delta_{M_G}} & M_G \times M_G,
}
\]
where $ev_t$ is the evaluation map at $t$. 
From the factorisation \eqref{key-decomp} of $\Delta_{M_G}$, we obtain
\[
 h_*(L(M_G)) \xrightarrow{\hat{\Delta}_G^!\circ \hat{p}_G^\natural}
 h_{*+\dim(G)-\dim(M)}(\mathrm{Map}(S^1\vee S^1,M_G)).
\]
Composing it with the map induced by the inclusion 
$\mathrm{Map}(S^1\vee S^1,M_G)\hookrightarrow L(M_G) \times L(M_G)$, one obtains the ``coproduct''
\[
h_*(L(M_G)) \to h_{*+\dim(G)-\dim(M)}(L(M_G) \times L(M_G)).
\]
This reduces to Chataur-Menichi's coproduct \cite[Example (b)]{CM} when $M=pt$ 
since $\mathrm{Map}(I,M_G)\xrightarrow{(ev_0,ev_{1})} M_G\times M_G$
is equivalent to $\Delta_{M_G}$.
\end{ex}
\begin{rem}
Recall the homotopy equivalence $LBG\sim G^{ad}_G=EG\times_G G^{ad}$,
where $G^{ad}$ is $G$ acted by itself by adjunction.
We have two different products on $h_*(LBG)\cong h_*(G^{ad}_{G})$ corresponding to (2) and (3) in \S 1:
\begin{align*}
\inprod: & h_k(G^{ad}_{G})\otimes h_l(G^{ad}_{G}) \to h_{k+l}(G^{ad}_{G}), \\
\psi: & h_k(LBG)\otimes h_l(LBG) \to h_{k+l+\dim(G)}(LBG).
\end{align*}
They are relation can be seen as follows.
Observe the following homotopy commutative diagram:
\[
\xymatrix{
 G^{ad}_G \ar[d]^\sim \ar@<0.5ex>[r]^{\Delta_G}& \ar[l]^(0.6){mul} (G^{ad}\times G^{ad})_{G} \ar[r]^{\hat{p}_G} \ar[d]^\sim 
 & G^{ad}_{G}\times G^{ad}_{G} \ar[d]^\sim \\
 LBG  & \ar[l]^(0.6){cat} \mathrm{Map}(S^1\vee S^1,BG)\ar[r]^{\hat{p}_G}  & LBG \times LBG,
}
\]
where $mul$ is induced by the multiplication of $G$ (which is equivariant with respect to the adjoint action).
By definition, $\psi = cat_* \circ \hat{p}_G^\natural = mul_* \circ \hat{p}_G^\natural$
and $\inprod = \Delta_G^! \circ \hat{p}_G^\natural$.
\end{rem}

The following properties of $\mu_{M_G}$ are easy to check:
\begin{prop}\label{primary-property}
We have
\begin{enumerate}
\item $\mu_{M_G}$ is associative up to sign.
\item $\mu_{M_G}$ commutes with homomorphism between homology theories.
\item $\mu_{M_G}$ is natural with respect to maps of diagrams with the same base $M_G$, that is,
to commutative diagrams of the form
\[
\xymatrix{
X \ar[r] \ar[d] & M_G \ar@{=}[d] & Y \ar[l] \ar[d] \\
X' \ar[r] & M_G  & Y'. \ar[l]
}
\]
\end{enumerate}
\end{prop}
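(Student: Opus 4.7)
The plan is to verify the three properties by unfolding the construction of $\mu_{M_G}$ as the composition
\[
h_k(X)\otimes h_l(Y) \xrightarrow{\times} h_{k+l}(X\times Y) \xrightarrow{\hat p^{\natural}} h_{k+l+\dim(G)}(Q) \xrightarrow{\hat\Delta_G^!} h_{k+l+\dim(G)-\dim(M)}(P),
\]
and then invoking the naturality of each of the three building blocks (the cross product, the Grothendieck bundle transfer, and the cofibration umkehr map) together with the fact that the Thom class and the orientation of the tangent bundle along the fibre used in the middle two factors are pulled back from fixed classes on $(TM)_G \to M_G$ and $\mathrm{ad}(EG)\to BG$ via Lemma \ref{orientability}.

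For property (3), a map of diagrams with common base $M_G$ induces compatible maps $P\to P'$, $Q\to Q'$, $X\times Y\to X'\times Y'$ by the functoriality of homotopy pullbacks. The cross product is bifunctorial; the transfer $\hat p^{\natural}$ is natural under pullbacks of fibre bundles covered by an orientation-preserving isomorphism on the tangents along the fibre (see \cite[Chap.\ V]{B}), and both $\hat p$ and the orientation are pulled back from $p$; and $\hat\Delta_G^!$ is natural by the lemma preceding Remark \ref{umkehr map for homotopy pullback}, since the relevant Thom classes are all restrictions of a single Thom class on the normal bundle of $\Delta_G$. Property (2) is similar: the cross product, the cap product with a Thom class, and the Pontryagin--Thom construction underlying $\hat p^{\natural}$ each commute with any ring homomorphism of multiplicative generalised homology theories, so $\mu_{M_G}$ does as well.

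I expect property (1), associativity up to a unit, to be the main obstacle. The strategy is to model both iterated products on the three-fold homotopy pullback $P_3$ of $X,Y,Z$ over $M_G$ through a single ``ternary'' product. The key geometric input is that the three-fold diagonal $\Delta^{(3)}_{M_G}:M_G \to M_G^3$ admits the factorisation
\[
M_G \xrightarrow{\Delta^{(3)}_G} (M\times M\times M)_G \xrightarrow{p^{(3)}} M_G^{3},
\]
and that both iterated factorisations $(\Delta_G\times\mathrm{id})\circ\Delta_G$ and $(\mathrm{id}\times\Delta_G)\circ\Delta_G$ coincide with $\Delta^{(3)}_G$ (after the analogous factorisation of $p$). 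Combining this with associativity of the cross product, the multiplicativity of the Grothendieck transfer under composition of bundles, and the fact that an iterated cofibration umkehr map computed with the restriction of a product Thom class agrees with the single umkehr map for $\Delta^{(3)}_G$ (by naturality of the cap product), one sees that both $\mu_{M_G}\circ(\mu_{M_G}\otimes\mathrm{id})$ and $\mu_{M_G}\circ(\mathrm{id}\otimes\mu_{M_G})$ equal the ternary homomorphism attached to $\Delta^{(3)}_G$ and $p^{(3)}$.

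The unit ambiguity arises from the signs needed to interchange the cross product with the transfer and the cofibration umkehr map, and from comparing the product orientation of the normal bundle of $\Delta^{(3)}_G$ with the two iterated orientations of the normal bundle of $\Delta_G$. The hard part will be the explicit identification of these signs (and of the two orientation conventions on the tangent bundle along the fibre of $p^{(3)}$, which is classified by the three-fold sandwich action of $G$ on $\mathfrak g\oplus\mathfrak g$); once this bookkeeping is carried out a straightforward diagram chase across the appropriate commutative diagram of homotopy pullbacks yields the associativity up to a unit.
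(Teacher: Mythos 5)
The paper offers no written proof for this proposition (it is simply declared ``easy to check''), and your plan --- unfolding $\mu_{M_G}$ into the cross product, the Grothendieck transfer $\hat p^\natural$, and the cofibration umkehr map $\hat\Delta_G^!$, then invoking the naturality of each, with associativity reduced to a ternary product attached to $\Delta^{(3)}_G$ and $p^{(3)}$ --- is exactly the argument the authors expect the reader to supply. The proposal is correct and takes the same approach the paper implicitly takes.
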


Next we show that the product $\mu_{M_G}$ commutes with restriction to closed subgroups.
Consider a closed subgroup $H\subseteq G$ with its Lie algebra $\mathfrak{h}$,
and denote by $\rho|_H$ the restriction of the action $\rho$
of $G$ on $M$ to $H$.
Let $G/H \to BH \xrightarrow{i} BG$ be the associated homogeneous bundle,
and $t(i)$ be its bundle of the tangents along the fibre.
We first recall the following standard fact:
\begin{lem}\label{bundle-of-tangents}
\begin{enumerate}
\item There is a canonical $G$-equivariant isomorphism $T(G/H)\simeq G\times_H \mathfrak{g}/\mathfrak{h}$,
where $H$ acts on $\mathfrak{g}$ by adjunction.
Hence, we have $t(i)\simeq EG\times_G G\times_H \mathfrak{g}/\mathfrak{h}\simeq EG\times_H \mathfrak{g}/\mathfrak{h}$
and the following short exact sequence of vector bundles over $BH$
\begin{equation}\label{EH-EG}
0\to ad(EH) \to i^*(ad(EG)) \to  t(i)\to 0
\end{equation}
since $i^*(ad(EG)) \simeq EG\times_H \mathfrak{g}$.
\item 
Furthermore, if $K\subseteq H$ is a closed subgroup and $i': BK\to BH$ is the classifying map of the inclusion,
we have the following short exact sequence of vector bundles over $BK$
\[
 0\to EG\times_K \mathfrak{h}/\mathfrak{k} \to 
 EG\times_K \mathfrak{g}/\mathfrak{k} \to
 EG\times_K \mathfrak{g}/\mathfrak{h} \to 0,
\]
where $\mathfrak{k}$ is the Lie algebra of $K$.
In particular, we have
$t(i\circ i')\simeq (i')^*(t(i))\oplus t(i')$.
\end{enumerate}
\end{lem}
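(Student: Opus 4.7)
The proof is essentially bookkeeping with associated bundles, so the plan is to reduce everything to $H$- or $K$-equivariant short exact sequences of representations and then apply the Borel construction.

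For part (1), I would start with the standard fact that the quotient map $G \to G/H$ is a principal $H$-bundle, so the tangent space at the identity coset is canonically $\mathfrak{g}/\mathfrak{h}$, and the $H$-stabiliser acts on this tangent space by the differential of conjugation, i.e.\ by the adjoint action (which descends to $\mathfrak{g}/\mathfrak{h}$ because $\mathfrak{h}$ is $\mathrm{Ad}(H)$-invariant). Trivialising $TG \simeq G \times \mathfrak{g}$ by left translation and using left-equivariance then yields the $G$-equivariant isomorphism $T(G/H) \simeq G \times_H \mathfrak{g}/\mathfrak{h}$. Applying the Borel construction $EG \times_G(-)$ and using $EG \times_G (G \times_H -) \simeq EG \times_H -$ gives $t(i) \simeq EG \times_H \mathfrak{g}/\mathfrak{h}$. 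Since $EG$ is contractible with a free $H$-action it is a model for $EH$, so $ad(EH) \simeq EG \times_H \mathfrak{h}$, and $i^*(ad(EG)) = EG \times_H \mathfrak{g}$ by definition of the pullback along $i: BH \to BG$. The claimed short exact sequence is then obtained by applying the exact functor $EG \times_H (-)$ to the $H$-equivariant short exact sequence of representations
\[
0 \to \mathfrak{h} \to \mathfrak{g} \to \mathfrak{g}/\mathfrak{h} \to 0.
\]

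For part (2), the same machine applies one level deeper. The $H$-equivariant sequence above restricts to a $K$-equivariant sequence, and sits inside the $K$-equivariant sequence
\[
0 \to \mathfrak{h}/\mathfrak{k} \to \mathfrak{g}/\mathfrak{k} \to \mathfrak{g}/\mathfrak{h} \to 0,
\]
obtained from the snake lemma applied to the quotient by $\mathfrak{k}$. Applying $EG \times_K(-)$ (which is exact since $EG \to BK$ is a principal $K$-bundle) gives the displayed short exact sequence of vector bundles over $BK$. To identify the terms with the bundles in the statement, apply part (1) twice: once to the composite $i \circ i': BK \to BG$ with fibre $G/K$ to get $t(i \circ i') = EG \times_K \mathfrak{g}/\mathfrak{k}$, and once to $i': BK \to BH$ to get $t(i') = EH \times_K \mathfrak{h}/\mathfrak{k} \simeq EG \times_K \mathfrak{h}/\mathfrak{k}$ (again using $EG$ as a model for $EH$); the pullback is computed directly as $(i')^*(t(i)) = (i')^*(EG \times_H \mathfrak{g}/\mathfrak{h}) \simeq EG \times_K \mathfrak{g}/\mathfrak{h}$.

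Finally, the splitting $t(i \circ i') \simeq (i')^*(t(i)) \oplus t(i')$ follows because every short exact sequence of vector bundles over a paracompact Hausdorff base (such as $BK$, or rather a paracompact model for it) splits: one chooses a Euclidean metric and takes the orthogonal complement. No obstruction is truly hard here; the only point that deserves care is keeping track of which groups act by which actions on which Lie algebras, and checking that the identifications $EG \times_H \mathfrak{h} \simeq ad(EH)$ and $EG \times_K \mathfrak{h}/\mathfrak{k} \simeq t(i')$ really come from the same recipe applied in part (1). Once those identifications are pinned down, both statements follow from the exactness and monoidality of the Borel construction.
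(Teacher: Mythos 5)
The paper states this lemma as a ``standard fact'' and gives no proof, so there is no argument of the authors to compare against; your proposal supplies the missing details. Your argument is correct: the left-translation trivialisation of $TG$ gives the equivariant identification $T(G/H)\simeq G\times_H \mathfrak{g}/\mathfrak{h}$, the Borel construction $EG\times_H(-)$ sends the $H$-equivariant short exact sequence $0\to\mathfrak{h}\to\mathfrak{g}\to\mathfrak{g}/\mathfrak{h}\to 0$ to the displayed sequence of bundles over $BH$ (using $EG$ as a model for $EH$ to identify $EG\times_H\mathfrak{h}$ with $ad(EH)$), and the same mechanism one level down with the $K$-equivariant sequence $0\to\mathfrak{h}/\mathfrak{k}\to\mathfrak{g}/\mathfrak{k}\to\mathfrak{g}/\mathfrak{h}\to 0$ gives part (2), with the splitting coming from a choice of metric. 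You have also correctly kept track of which group acts on which Lie algebra and by which restriction of the adjoint action, which is where the real bookkeeping lies.
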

Orientations of two vector bundles in a short exact sequence
determine that of the third one
(see, for example, \cite[Chapter V, Proposition 1.10]{Rudyak}).
We always orient the bundle of tangents using the above short exact sequences
so that we have $(i\circ i')^\natural = \pm  (i')^\natural \circ i^\natural$
(\cite[Chap. V (6.1)]{B}).
\medskip

By abuse of notation, 
denote by $i$ the map $M_H\to M_G$ which is the pullback of $i$ via the classifying map $M_G\to BG$.
We can pullback the whole square \eqref{equivariant-square} via $i$ to have 
\[
\xymatrix@!0@=45pt{
 & Z_H \ar[rr]\ar'[d][dd]^{i} \ar[dl]
   & & K_H \ar[dd]^i \ar[dl]
\\
 L_H\ar[rr]\ar[dd]^i
 & & M_H \ar[dd]^(0.4)i
\\
 & Z_G \ar'[r][rr] \ar[dl]
   & & K_G \ar[dl]
\\
 L_G \ar[rr]
 & & M_G
}
\]
where we denote all the vertical maps by the same symbol $i$.
The orientations of the bundles of tangents along the fibre for $i$ are given 
by the pullback of the orientation of $BH\to BG$,
which in turn is given by those of $ad(EG)$ and $ad(EH)$ by \eqref{EH-EG}.

\begin{prop}[Restriction]\label{primary-restriction}
Suppose that $(M,G,\rho)$ and $(M,H,\rho|_H)$ are $h$-oriented
in such a way that the orientation of $(TM)_H$ is the pullback orientation of $(TM)_G$.
Then, the products $\mu_{M_G}$ and $\mu_{M_H}$ are compatible with 
the Grothendieck bundle transfer for $i$, that is, the following diagram commutes up to sign:
\[
\xymatrix{
h_k(K_G)\otimes h_l(L_G) \ar[r]^{\mu_{M_G}} \ar[d]^{i^\natural \otimes i^\natural}
 & h_{k+l+\dim(G)-\dim(M)}(Z_G) \ar[d]^{i^\natural} \\
h_{k+N}(K_H)\otimes h_{l+N}(L_H) \ar[r]^{\mu_{M_H}} & h_{k+l+2N+\dim(H)-\dim(M)}(Z_H),
}
\]
where $N=\dim(G)-\dim(H)$.
\end{prop}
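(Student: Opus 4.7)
Plan: The proof proceeds by unfolding the definitions of $\mu_{M_G}$ and $\mu_{M_H}$ from Theorem \ref{def-of-primary} as the triple composition $\hat{\Delta}^! \circ \hat{p}^\natural \circ \times$, and verifying that $i^\natural$ intertwines each of the three factors up to sign. The key geometric observation is that the entire $H$-version of the defining diagram \eqref{defining-diagram-of-mu} is obtained from the $G$-version by pulling back along the classifying map $BH \to BG$ of the inclusion $H \hookrightarrow G$. Concretely, $K_H = K_G \times_{BG} BH$, $L_H = L_G \times_{BG} BH$, $Z_H = Z_G \times_{BG} BH$, and the intermediate space $Q_H := K_H \times_{BH} L_H$ equals $Q_G \times_{BG} BH$; hence each of the vertical maps $i$ from an $H$-object to the corresponding $G$-object is a fibration with fibre $G/H$, whose bundle of tangents along the fibre is the pullback of $t(BH\to BG) = EG \times_H \mathfrak{g}/\mathfrak{h}$.

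The commutativity then reduces to three compatibilities, one for each factor of $\mu$. First, compatibility of the cross product with the transfer, $\times \circ (i^\natural \otimes i^\natural) = \pm (i\times i)^\natural \circ \times$, is the Fubini property of the Grothendieck bundle transfer for a product fibration \cite[Chap.~V]{B}. Second, compatibility of $\hat{p}^\natural$ with the transfer, $i^\natural \circ \hat{p}_G^\natural = \pm \hat{p}_H^\natural \circ (i\times i)^\natural$: both sides are Grothendieck transfers for the composite $Q_H \to K_G \times L_G$ obtained via the two different factorisations (through $Q_G$ and through $K_H \times L_H$), so the equality follows from the multiplicativity of the Grothendieck transfer \cite[Chap.~V~(6.1)]{B}. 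Third, compatibility of the umkehr map with the transfer, $i^\natural \circ \hat{\Delta}_G^! = \pm \hat{\Delta}_H^! \circ i^\natural$: here one uses that the square formed by $\hat{\Delta}_H$, $\hat{\Delta}_G$ and the two vertical $i$'s is a homotopy pullback, and that the normal bundle of $\hat{\Delta}_H$ is the pullback of that of $\hat{\Delta}_G$ (both being pullbacks of $(TM)_G$ by the orientation hypothesis). Passing to strict pullbacks via Remark \ref{umkehr map for homotopy pullback} and realising both $\hat{\Delta}^!$ and $i^\natural$ as Pontryagin-Thom collapses of appropriate fibrewise embeddings, the two compositions arise from a single Pontryagin-Thom collapse for the composite embedding of $Z_H$ into a vector bundle over $Q_G$, and hence agree up to sign.

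The main obstacle is the careful bookkeeping of orientations and signs. The sign in the second compatibility is controlled by comparing the two decompositions of the bundle of tangents along the fibre of the composite $Q_H \to K_G \times L_G$: pulled back over $Q_H$, the short exact sequence \eqref{EH-EG} shows that the two resulting stable orientations agree up to a permutation of Whitney factors. The sign in the third compatibility requires commuting the Thom class of the normal bundle of $\hat{\Delta}_H$ with the Pontryagin-Thom collapse for the transfer $i^\natural$, which is ensured by the hypothesis that the orientation of $(TM)_H$ is the pullback of that of $(TM)_G$. Once all orientation conventions are aligned, the various signs combine into a single overall $\pm$ and the diagram commutes up to sign.
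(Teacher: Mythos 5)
Your proposal follows the same overall strategy as the paper: decompose $\mu_{M_G}$ into the cross product, the bundle transfer $\hat{p}^\natural$, and the umkehr map $\hat{\Delta}_G^!$, and verify that $i^\natural$ intertwines each factor up to sign, with the intermediate spaces in the $H$-version obtained by pulling the $G$-version back along $BH\to BG$. The cross-product (Fubini) step and the $\hat{p}^\natural$ step via functoriality of the transfer and comparison of orientations through \eqref{EH-EG} and Lemma~\ref{bundle-of-tangents} match the paper's argument.

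The one step where you diverge is the compatibility of $\hat{\Delta}^!$ with $i^\natural$. The paper handles $i^\natural\circ\hat{\Delta}_G^!=\pm\hat{\Delta}_H^!\circ i^\natural$ in a single line via the projection formula $i^\natural(x\cap c)=i^\natural(x)\cap i^*(c)$: since $\hat{\Delta}^!$ is locally a cap with a Thom class, and the orientation hypothesis makes the $H$-Thom class the pullback of the $G$-Thom class, commutation is immediate. You instead propose to realise both $\hat{\Delta}^!$ and $i^\natural$ as Pontryagin--Thom collapses and fold them into a single collapse for a composite fibrewise embedding of $Z_H$. This is a legitimate route, but it carries more overhead: the two maps are different flavours of umkehr (cofibration umkehr vs.\ bundle transfer), and unifying them into one Pontryagin--Thom collapse needs care about the intermediate fibrewise embedding and its stable normal data. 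The projection-formula argument the paper uses gets the same sign-level commutation with less machinery. Both approaches are sound; yours buys a more uniformly geometric picture at the cost of extra bookkeeping.
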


\begin{proof}
We show for the diagrams below
\[
 \xymatrix{
M_H \ar[r]^(0.45){\Delta_H} \ar[d]^i & (M\times M)_{H} \ar[d]^i \\
M_G \ar[r]^(0.45){\Delta_G} & (M\times M)_{G} 
}, \quad
\xymatrix{
 (M\times M)_H \ar[r]^{p_H} \ar[d]^i & M_H\times M_H \ar[d]^{i\times i} \\
 (M\times M)_G \ar[r]^{p_G} & M_G \times M_G
 },
\]
the umkehr maps commute up to sign, that is, $\Delta^!_H\circ i^\natural = \pm i^\natural \circ \Delta^!_G$ 
and $p_H^\natural \circ (i\times i)^\natural = \pm i^\natural\circ p_G^\natural$.
Then, the similar commutativity of their pullbacks follows by the same argument.

For the left diagram, the commutativity follows from the naturality of the cap product
$i^\natural(x\cap c) = i^\natural(x)\cap i^*(c)$.
For the right diagram, observe that it is a pullback of the following:
\[
\xymatrix{
 BH \ar[r]^(0.45){p_H} \ar[d]^i & BH \times BH \ar[d]^{i\times i} \\
 BG \ar[r]^(0.45){p_G} & BG \times BG.
 }
\]
The bundle of tangents along the fibre for
$p_G\circ i=i\times i\circ p_H: BH\ \to BG\times BG$
is, by Lemma \ref{bundle-of-tangents}, 
\[
i^*(t(p_G))\oplus t(i)
\simeq p_H^*(t(i\times i)) \oplus t(p_H),
\]
where $p_H^*(t(i\times i))\simeq t(i)\oplus t(i)$,
and $t(p_G) \simeq ad(EG)$ by Lemma \ref{orientability}.
Denote an orientation of a bundle $\xi$ over $B$ by an element in the cohomology of the Thom space
$u(\xi)\in \tilde{h}^*(B^\xi)$.
Now we compare the following two orientations for $t(BH \to BG\times BG)$:
\begin{align*}
u(t(p_G\circ i))&=\pm u(i^*(ad(EG))) \wedge u(t(i)) \\
u(t(i\times i\circ p_H))&=\pm u(t(i)) \wedge u(t(i)) \wedge u(ad(EH)).
\end{align*}
Since $u(i^*(ad(EG))) = \pm u(ad(EH)) \wedge u(t(i))$, the two agree up to sign.
\end{proof}

We now see another kind of naturality of $\mu_{M_G}$ with regard to the extension of groups.
Let $1\to N \to G \xrightarrow{\gamma} G/N \to 1$
be a short exact sequence of groups.
Assume that $N$ acts on $M$ trivially so that the action $\rho$ of $G$ on $M$
induces an action $\overline{\rho}$ of $G/N$. Given a homotopy pullback
\[
\xymatrix{
P_G \ar[d] \ar[r] & X \ar[d] \\
Y \ar[r] & M_G,
}
\]
we use the induced map $\gamma: M_G \to M_{G/N}$ and the compositions $X\to M_G \xrightarrow{\gamma} M_{G/N}$
(resp. $Y\to M_G \xrightarrow{\gamma} M_{G/N}$) to form another homotopy pullback
\[
\xymatrix{
P_{G/N} \ar[d] \ar[r] & X \ar[d] \\
Y \ar[r] & M_{G/N}.
}
\]
Assuming that both triples $(M,G,\rho)$ and $(M,G/N,\bar{\rho})$ are oriented, we would like to relate  $\mu_{M_G}$ and $\mu_{M_{G/N}}$.
For this, we need an auxiliary group.
Let $K=\{ (x,y)\in G\times G \mid \gamma(x)=\gamma(y)\}$ be
the fibre product of two copies of $G$ over $G/N$.
Denote by $G \xrightarrow{d} K \xrightarrow{j} G\times G$ the inclusions of subgroups (as well as their classifying maps),
and by $k$ the homomorphism $K\to G/N$ sending $(x,y)$ to $\gamma(x)$.
\begin{lem}
The following diagram is a homotopy pullback 
\begin{equation}\label{def-of-K}
\xymatrix{
BK \ar[r]^k \ar[d]^{j} & B(G/N) \ar[d]^{p_{G/N}} \\
BG \times BG \ar[r]^(0.4){\gamma\times \gamma} & B(G/N) \times B(G/N).
}
\end{equation}
\end{lem}
\begin{proof}
Let $Z$ be the homotopy pullback of the diagram.
The whisker map $BK\to Z$ is shown to be a weak equivalence by the five lemma applied
to the bundle map from $G/N\hookrightarrow BK\to BG\times BG$ to $G/N\hookrightarrow Z\to BG\times BG$.
\end{proof}

Let $W$ be the homotopy pullback
\begin{equation}\label{K-G/N}
\xymatrix{
W \ar[r] \ar[d]^w & M_{G/N} \ar[d]^{\Delta_{G/N}} \\
(M\times M)_K \ar[r]_k & (M\times M)_{G/N}. 
}
\end{equation}

\begin{lem}
We have the following ladder of homotopy pullbacks:
\[
\xymatrix{
P_G \ar[d]^{\hat{d}} \ar[r] & M_G \ar[r]^(0.4){\Delta_G} \ar[d]^d & (M\times M)_G \ar[r] \ar[d]^d & BG \ar[d]^d \\
P_{G/N} \ar[r] & W \ar[r]^(0.4)w &  (M\times M)_K \ar[r] & BK.
}
\]
where $\hat{d}$ is the whisker map.
\end{lem}
\begin{proof}
The right-most square is a homotopy pullback by a standard fact about the homotopy quotient.
For the middle square, observe the following commutative diagram:
\[
\xymatrix{
M_G \ar[r]^(0.4){\Delta_G} \ar[d]^d \ar@{}[dr]|{\mbox{(3)}} & (M\times M)_G \ar[d] \ar[r] \ar@{}[ddr]|{\mbox{(2)}} & BG \ar[dd]^\gamma\\
W \ar[r]^(0.4)w \ar[d]  \ar@{}[dr]|{\mbox{(1)}} &  (M\times M)_K \ar[d] \\
M_{G/N} \ar[r]^(0.4){\Delta_{G/N}}   &  (M\times M)_{G/N}\ar[r] & B(G/N).
}
\]
Since the square (2) and the concatenated square (1)+(2)+(3) are pullbacks, 
so is (1)+(3) by Lemma 14 in \cite{M}.
Since the (1) is a homotopy pullback by definition, so is (3).

For the left-most square, consider the following diagram:
\begin{equation}\label{big diagram}
\xymatrix{
P_G \ar@{}[dr]|{\mbox{(5)}} \ar[r] \ar[d]^{\hat{d}} & M_G \ar[d]^d \\
P_{G/N} \ar@{}[dr]|{\mbox{(4)}} \ar[r] \ar[d] & W \ar[d]^{w} \ar[r] \ar@{}[dr]|{\mbox{(2)}} & M_{G/N} \ar[d]^{\Delta_{G/N}} \\
Q_{G/N} \ar@{}[dr]|{\mbox{(3)}} \ar[r]^u \ar[d] & (M\times M)_K \ar[r] \ar[d]^j \ar@{}[dr]|{\mbox{(1)}} & (M\times M)_{G/N} \ar[d]^{p_{G/N}} \\
 X\times Y \ar[r] & M_G\times M_G \ar[r]^(0.4){\gamma\times \gamma} & M_{G/N}\times M_{G/N}.
}
\end{equation}
We claim that all the squares are homotopy pullbacks. 
Square (1) is the pullback of \eqref{def-of-K}.
This implies that the concatenated square (1)+(2) is a homotopy pullback as well. The outer square (1)+(2)+(3)+(4) is a homotopy pullback by definition, hence it follows that also the square (3)+(4) is a homotopy pullback. This, together with the fact that the square (3)+(4)+(5) is a homotopy pullback by definition, shows that the square (5) is a homotopy pullback.
\end{proof}

Now, we are ready to prove:
\begin{prop}[Extension]\label{primary-extension}
Assume that $(TM)_G\to M_G$ is given the pullback orientation of that of $(TM)_{G/N}\to M_{G/N}$.
With an appropriate choice of an orientation of $-t(d)$ for the bundle $d: BG \to BK$,
we have $$\mu_{M_G}=\pm \hat{d}^\natural \circ \mu_{M_{G/N}}.$$
\end{prop}
\begin{proof}
Consider the following homotopy commutative diagram:
\[
\xymatrix{
P_G \ar[r]^(0.4){\hat{\Delta}_G} \ar[d]_{\hat{d}} & Q_G \ar[r]^{\hat{p}_G} \ar[d]^d & X \times Y \\
 P_{G/N} \ar[r]^(0.4){\hat{\Delta}_{G/N}} & Q_{G/N} \ar[ru]_{\hat{p}_{G/N}}.
}
\]
where $Q_G$ and $Q_{G/N}$ are as in \eqref{defining-diagram-of-mu}.
The right triangle is the pullback of the triangle
\[
\xymatrix{
 BG \ar[r]^(0.4){p_G} \ar[d]^d & BG \times BG \\
 BK \ar[ru]_{j}
}
\]
along $X\times Y\to M_G\times M_G \to BG\times BG.$
Hence, by choosing the orientation of $-t(d)$ by Lemma \ref{bundle-of-tangents} applied to
the sequence $G\xrightarrow{d} K \xrightarrow{j} G\times G$, 
we have $d^{\natural} \circ \hat{p}_{G/N}^\natural= \pm \hat{p}_G^\natural$. The left square is the pullback of the  following diagram
\[
\xymatrix{
M_G \ar[r]^(0.4){\Delta_G} \ar[d]_d & (M\times M)_G  \ar[d]^d  \\
 W \ar[r]^(0.4){w} & (M\times M)_K
}
\]
along $Q_{G/N}\xrightarrow{u} (M\times M)_K$.
Give the pullback orientation of $\Delta_{G/N}$ to $w$ by \eqref{K-G/N}. Then, $\Delta_G$ is given the pullback orientation of $w$ by assumption. 
A similar argument to the one used in Proposition \ref{primary-restriction} shows that
 ${d}^{\natural} \circ w^! = \pm {\Delta}_{G}^! \circ d^{\natural}$
and $\hat{d}^{\natural} \circ \hat{\Delta}_{G/N}^! = \pm \hat{\Delta}_{G}^! \circ d^{\natural}$.
We conclude that 
$$\hat{d}^\natural \circ \mu_{M_{G/N}}=  \hat{d}^\natural \circ \hat{\Delta}_{G/N}^! 
\circ \hat{p}_{G/N}^\natural=\pm\hat{\Delta}_{G}^! \circ d^{\natural} \circ \hat{p}_{G/N}^\natural
=\pm \hat{\Delta}_{G}^! \circ \hat{p}_G^\natural = 
\pm\mu_{M_G}.$$
\end{proof}

\section{vanishing of $\mu_{M_G}$} \label{sec:vanishing}
From now on, we restrict to the case where $h$ is the ordinary homology
with coefficients in a commutative ring  $R$. We prove that in certain cases the product $\mu_{M_G}$ vanishes.
 In particular, it implies that the Chataur-Menichi pair-of-pants product vanishes in positive degrees. In the next section we use the vanishing result of $\mu_{M_G}$
  to construct a secondary product $\overline{\mu_{M_G}}$.

Recall from Lemma \ref{triple-orientation} that the triple $(M,G,\rho)$ is $R$-orientable if and only if 
the action $\rho$ on $M$ is $R$-orientation preserving and
the adjoint action of $G$ on $\mathfrak{g}$ is $R$-orientation preserving.
We will always assume that our triples are oriented. 

\begin{lem}\label{vanishing lemma}
Let $K$ and $L$ be $G$-spaces.
Given the following pullback diagram
\[
\xymatrix{
(K\times L)_G \ar[d] \ar[r] & K_G \ar[d]^{f_G} \\
L_G \ar[r]^{g_G} & BG,
}
\]
where $g_G$ and $f_G$ are the classifying maps. 
The product $\mu_{BG}: H^G_k(K;R) \otimes H^G_l(L;R) \to H^G_{k+l+\dim(G)}(K\times L;R)$
 is trivial if $k>\dim_R(K)-\dim(G)$ or $l>\dim_R(L)-\dim(G)$.
\end{lem}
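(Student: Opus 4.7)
The plan is as follows. Since $M = pt$, the embedding $\Delta_G$ is an equivalence, so $\mu_{BG}(\alpha \otimes \beta) = \hat{p}^\natural(\alpha \times \beta)$, where $\hat{p} : (K \times L)_G \to K_G \times L_G$ is the $G$-fibration obtained as the pullback of $\Delta_{BG} : BG \to BG \times BG$. I would analyse this transfer via the Serre spectral sequence
\[
E^2_{p,q} = H_p(K_G \times L_G; H_q(G; R)) \Rightarrow H_{p+q}((K \times L)_G; R).
\]
Since $H_q(G; R) = 0$ for $q > \dim G$, the Serre filtration on $H_{k+l+\dim G}((K \times L)_G; R)$ satisfies $F_{k+l-1} = 0$. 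Consequently $\hat{p}^\natural(\alpha \times \beta)$ is determined by its associated-graded class in $E^\infty_{k+l, \dim G}$, which, by the defining property of the Grothendieck bundle transfer reviewed in \S\ref{sec:primary}, is the surviving class of $(\alpha \times \beta) \otimes [G]$, with $[G] \in H_{\dim G}(G; R)$ the fundamental class. It therefore suffices to show that $(\alpha \times \beta) \otimes [G]$ is killed in the spectral sequence under the degree hypothesis.

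As a warm-up, in the case $K = L = pt$ the Becker--Gottlieb relation gives $(\Delta_{BG})_* \circ \Delta_{BG}^\natural = \chi(G) \cdot \mathrm{id}$, which vanishes whenever $\chi(G) = 0$ in $R$ (in particular, for any compact Lie group of positive dimension with connected identity component). The diagonal $(\Delta_{BG})_* : H_n(BG; R) \to H_n(BG \times BG; R)$ is split injective, since the K\"unneth projection to $H_n(BG) \otimes H_0(BG) \cong H_n(BG)$ retracts it. Hence $\Delta_{BG}^\natural = 0$, which settles the lemma in this case.

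For general $K, L$, WLOG assume $k > \dim_R K - \dim G$. Since $H_q(K; R) = 0$ for $q > \dim_R K$, in the Serre SS of $K \to K_G \to BG$ the leading term of $\alpha$ has fibre degree $q_\alpha \leq \dim_R K$, so its base degree $p_\alpha = k - q_\alpha > -\dim G$. This bound provides enough room in the Serre SS of $\hat{p}$ to produce, by the multiplicative $H^*(K_G \times L_G; R)$-module structure of the SS together with the Borel transgression theorem applied to the odd primitive generators of $H^*(G; R)$ whose product gives the top class $[G]$, a source class $\xi \in H_{k+l+\dim G + 1}(K_G \times L_G; R)$ whose transgression differential satisfies $d_{\dim G + 1}(\xi) = (\alpha \times \beta) \otimes [G]$.

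The main obstacle is the precise construction of $\xi$: one must carefully track the cup-product action on the Serre SS of $\hat{p}$ and verify that the degree hypothesis exactly matches the bidegree required to lift the universal transgression of $[G]$ through the multiplicative structure. An alternative route would be an Eilenberg--Moore-type computation modelled on the original proof of Chataur--Menichi, but the Serre SS approach above has the virtue of aligning directly with the bundle-transfer construction of $\hat{p}^\natural$ given in \S\ref{sec:primary}.
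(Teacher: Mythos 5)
Your route is genuinely different from the paper's, and unfortunately it is incomplete at exactly the step that matters. You explicitly flag this yourself: ``The main obstacle is the precise construction of $\xi$.'' But producing $\xi$ is precisely the content of the lemma; without it the argument does not close. The degree analysis you offer in its support does not do what you want: from $k>\dim_R K-\dim G$ you conclude $p_\alpha>-\dim G$, but since filtration degrees in the Serre spectral sequence are automatically nonnegative, this inequality carries no information whenever $\dim G\ge 1$, so it cannot be the hypothesis that ``provides enough room.'' There are also further issues: treating $[G]$ as a product of transgressive primitives is a field-coefficient (or rational) phenomenon and does not straightforwardly transfer to an arbitrary commutative ring $R$; and your warm-up via $(\Delta_{BG})_*\circ\Delta_{BG}^\natural=\chi(G)\cdot\mathrm{id}$ only applies when $\chi(G)=0$, hence misses the case $\dim G=0$ (finite $G$), which the lemma also covers.

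The paper's proof avoids this entirely by a clean replacement trick. Assuming, say, $l\le\dim_R L$ is the relevant hypothesis on $K$, pick a CW complex $T$ of dimension $l$ with a map $q:T\to L_G$ surjecting onto $H_l(L_G;R)$, pull back the whole diagram, and let $\tilde T$ be the pullback of $T$ and $K_G$ over $BG$. By naturality of the construction, every value of $\mu_{BG}$ in degree $k+l+\dim G$ is pushed forward from $H_{k+l+\dim G}(\tilde T;R)$. But the fibration $K\to\tilde T\to T$ forces $\dim_R(\tilde T)\le l+\dim_R K<k+l+\dim G$ by hypothesis, so that homology group is zero. This uses the Serre spectral sequence only to bound a dimension, not to track the transfer; it is simpler and works uniformly in $\dim G$ and in the coefficient ring. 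If you want to salvage a spectral-sequence proof, you would at minimum need to (a) justify that $\hat p^\natural(\alpha\times\beta)$ has leading term $(\alpha\times\beta)\otimes[G]$ in $E^\infty_{k+l,\dim G}$, and (b) actually construct the killing class $\xi$, which for general $R$ and general $G$ is substantially harder than the paper's one-line dimension count.
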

\begin{proof}
Assume $k>\dim_R(K)-\dim(G)$. Let $T$ be a CW-complex of dimension $l$ and $q:T\to L_G$ a map which induces a surjection $q_*:H_l(T;R)\to H_l(L_G;R)$ (one can always find such $T$ and $q$ by taking a skeleton of a CW-approximation). 
Construct the following homotopy pullback diagram
\[
\xymatrix{
\tilde{T}\ar[r]^-{\tilde{q}} \ar[d]&(K\times L)_G \ar[d] \ar[r] & K_G \ar[d]^{f_G} \\
T\ar[r]^{q} &L_G \ar[r]^{g_G} & BG.
}
\]
By the naturality of the Grothendieck bundle transfer, we have $\mu_{BG}=\tilde{q}_* \circ \tilde{\mu}_{BG}$, where $\tilde{\mu}_{BG}$ is the primary product defined for 
 the outer diagram which involves $T$ and $K_G$. 
By the Serre spectral sequence for the fibration $K\to \tilde{T} \to T$ we have
\[
\dim_R(\tilde{T}) \leq \dim(T) +\dim_R(K) < l+k+dim(G),
\]
since the homology of $T$ with local coefficients vanishes above its dimension. Hence, by dimensional reasons,
 $\tilde{\mu}_{BG}$, and thus,  $\mu_{BG}$ vanishes.
\end{proof}

As a corollary, we have our main vanishing result in which $M$ needs not be a point:
\begin{thm}[Vanishing of the product]\label{vanishing}
For the diagram \eqref{equivariant-square}, 
the product $\mu_{M_G}: H^G_k(K;R) \otimes H^G_l(L;R) \to H^G_{k+l+\dim(G)-\dim(M)}(Z;R)$
is trivial if $k>\dim_R(K)-\dim(G)$ or $l>\dim_R(L)-\dim(G)$.
\end{thm}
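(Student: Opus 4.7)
The plan is to reduce Theorem \ref{vanishing} to Lemma \ref{vanishing lemma} by exhibiting $\mu_{M_G}$ as the composition of $\mu_{BG}$ with a single umkehr map. The key observation is that, in the defining diagram \eqref{defining-diagram-of-mu} for $\mu_{M_G}$, the intermediate pullback $Q$ is already the space $(K\times L)_G$ that appears as the target of $\mu_{BG}$, so the $\hat p^\natural\circ\times$ part of $\mu_{M_G}$ coincides with $\mu_{BG}$, and only the $\hat\Delta_G^!$ piece remains to apply afterwards.

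To make this precise, take $X=K_G$ and $Y=L_G$ in \eqref{defining-diagram-of-mu}, and recall that $p:(M\times M)_G\to M_G\times M_G$ is itself the homotopy pullback of $\Delta_{BG}$ along $m\times m:M_G\times M_G\to BG\times BG$. Pasting the pullback square defining $Q$ with this one gives
\[
Q \;\simeq\; (K_G\times L_G)\times_{BG\times BG}BG \;\simeq\; K_G\times_{BG}L_G \;\simeq\; (K\times L)_G.
\]
Under this identification, the Grothendieck bundle transfer $\hat p^\natural:H_{k+l}(K_G\times L_G)\to H_{k+l+\dim(G)}(Q)$ agrees with the one used to construct $\mu_{BG}$ for the $G$-spaces $K$ and $L$: both are pulled back along $K_G\times L_G\to BG\times BG$ from the Grothendieck transfer of $\Delta_{BG}$, with the orientation of the bundle of tangents along the fibre inherited from $ad(EG)$ (Lemma \ref{orientability}).

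Consequently, the first two arrows in the composition defining $\mu_{M_G}$ assemble into $\mu_{BG}$, and we obtain the factorization
\[
\mu_{M_G} \;=\; \hat\Delta_G^!\circ\mu_{BG}: \; H^G_k(K;R)\otimes H^G_l(L;R)\;\longrightarrow\; H^G_{k+l+\dim(G)-\dim(M)}(Z;R),
\]
where $\hat\Delta_G^!$ is the umkehr map of the closed embedding $Z_G\hookrightarrow(K\times L)_G$ obtained by pulling back $\Delta_G:M_G\to(M\times M)_G$. Under the hypothesis $k>\dim_R(K)-\dim(G)$ or $l>\dim_R(L)-\dim(G)$, Lemma \ref{vanishing lemma} gives $\mu_{BG}=0$, and hence $\mu_{M_G}=0$.

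The only point that requires care is the identification of $Q$ with $(K\times L)_G$ together with the matching of the two instances of $\hat p^\natural$; both are immediate once one notes that everything in sight is pulled back from $\Delta_{BG}:BG\to BG\times BG$, so after this recognition the argument is essentially a bookkeeping reduction to the $M=\mathrm{pt}$ case handled in the lemma.
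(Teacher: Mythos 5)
Your proof is correct and uses the same argument as the paper: in both, one observes that the first two arrows ($\hat p^\natural \circ \times$) in the defining composition of $\mu_{M_G}$ are exactly $\mu_{BG}$, so that $\mu_{M_G}=\hat\Delta_G^!\circ\mu_{BG}$, and then Lemma \ref{vanishing lemma} kills $\mu_{BG}$. The extra care you take in identifying $Q\simeq(K\times L)_G$ and matching the transfers is a sound but unsurprising verification of what the paper takes as given.
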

\begin{proof}
In this case $\mu_{M_G}$ is defined to be the composition  
\[
\mu_{M_G}:H^G_{k}(K)\otimes H^G_{l}(L)\xrightarrow{\mu_{BG}} H^G_{k+l+\dim(G)}((K\times L)) 
\xrightarrow{\hat{\Delta}_G^!} H^G_{k+l+\dim(G)-\dim(M)}(Z)
 \] 
and by Lemma \ref{vanishing lemma} the first map is trivial.
\end{proof}

\begin{ex}
Assume the adjoint action of $G$ on $\mathfrak{g}$ is $R$-orientation preserving
and $G$ acts on $M$ orientation preservingly.
When $K=L=M$, the equivariant intersection product
\[
 \inprod: H^G_k(M;R) \otimes H_l^G(M;R) \to H^G_{k+l+\dim(G)}(M;R)
\]
is trivial when $k>\dim_R(M)-\dim(G)$ or $l>\dim_R(M)-\dim(G)$.
\end{ex}

\begin{ex}\label{ex:Chataur-Menichi}
Assume the adjoint action of $G$ on $\mathfrak{g}$ is orientation preserving.
Consider $\psi$ in \S 1 (3) when $M=pt$ and $h$ is the ordinary homology:
\[
\psi: H_{k}(LBG;R)\otimes H_{l}(LBG;R)\to H_{k+l+\dim(G)}(LBG;R).
\]
%
By Lemma \ref{vanishing lemma}, the product $\psi$ is trivial when $k>0$ or $l>0$. 
Moreover, by the gluing property \cite[Proposition 13]{CM} and the pants decomposition of the surface $F_{g,p+q}$ 
(the connected compact oriented surface of genus $g$ with $p$ incoming boundary circles and $q$ outgoing boundary circles),
the vanishing of the pair-of-pants product implies 
that the non-equivariant version of Chataur-Menichi's stringy operation associated to $F_{g,p+q}$ with field coefficients $\kk$
\[
 H_*(LBG;\kk)^{\otimes p} \to H_*(LBG;\kk)^{\otimes q}
\]
is trivial in positive degrees unless $g=0$ and $p=1$.

\end{ex}
\begin{rem}
When $G$ is finite or connected, the vanishing of
$\psi$ follows from 
Tamanoi's vanishing theorem \cite[\S4]{Ta} by invoking
Hepworth and Lahtinen's result \cite{HL} that asserts that
the Chataur-Menichi's TQFT admits a counit.
\end{rem}

The string product $\psi$ is not always trivial for generalised homology theories, as seen in the following example.

\begin{ex} \label{product in LBG}
The free loop fibration $LBG\to BG$ has a section $i$, hence $i_*:h_*(BG) \to h_*(LBG)$ is an injection. 
By Proposition \ref{primary-property} (3), 
we have $i_*\circ \inprod=\psi\circ (i_*\otimes i_*)$,
where $\inprod$ is the equivariant intersection product for $h_*(BG)$ (\S 1 (2)).
Now we show $\inprod$ is non-trivial when $h= \Omega^{fr}$ is the framed bordism, $G=S^1$, 
and $ad(ES^1)$ is oriented in the standard way.
Let $\iota=[pt]\in \Omega^{fr}_0$ and $\eta=[S^1]\in \Omega^{fr}_1$ be the generators.
Denote by $b: pt \to BS^1$ the base point inclusion.
By assumption, the bundle $S^1\hookrightarrow BS^1\xrightarrow{p_{S^1}} BS^1\times BS^1$
is oriented in such a way that $p_{S^1}^\natural(b_*(\iota)\times b_*(\iota))=b_*(\eta)$.
 Therefore, we have
$$\inprod(b_*(\eta)\otimes b_*(\iota))=p_{S^1}^\natural(b_*(\eta) \times b_*(\iota))=
\eta p_{S^1}^\natural(b_*(\iota) \times b_*(\iota))=\eta  b_*(\eta)=\eta^2 b_*(\iota)\neq 0.$$
\end{ex}

\section{The secondary product $\overline{\mu_{M_G}}$}\label{sec:def-secondary}
The vanishing of $\mu_{M_G}$ in Theorem \ref{vanishing} suggests 
 that in the case where both inequalities $k>\dim_{R}(K)-\dim(G)$ and $l>\dim_{R}(L)-\dim(G)$ hold,
  we can define a secondary product associated to \eqref{equivariant-square}
\[
\overline{\mu_{M_G}}:H^G_{k}(K;R)\otimes H^G_{l}(L;R)\to  H^G_{k+l+\dim(G)-\dim(M)+1}(Z;R).
\]
In this section we describe the construction of  $\overline{\mu_{M_G}}$ and prove some basic properties. 

We first use some auxiliary spaces $S$ and $T$,
and later we show that the construction is independent of this choice. 
Given maps $h_1:S\to K_G$ and $h_2:T \to L_G$ where $S$ and $T$ have the homotopy type of CW-complexes of dimensions $k$ and $l$ respectively, we construct a homomorphism
\[
 \varphi: H_k(S;R) \otimes H_l(T;R) \to H_{k+l+\dim(G)+1}((K\times L)_G;R).
\]
Consider the following diagram, where all the squares are homotopy pullbacks:
\begin{equation}\label{defining-diagram}
\xymatrix{
\pull \ar[r] \ar[d] & \hat{S} \ar[r] \ar[d] & S \ar[d] \\
\hat{T}\ar[r] \ar[d] & (K \times L)_G \ar[r] \ar[d] & K_G \ar[d]^{m\circ f_G} \\
T \ar[r] & L_G \ar[r]^{m\circ g_G} & BG.}
\end{equation}
We first consider the outer diagram and obtain a homomorphism
\[
\mu_{BG}: H_{k}(S;R) \otimes H_{l}(T;R) \to H_{k+l+\dim(G)}(\pull;R).
\]
Next, we construct a homomorphism (see also Appendix \ref{sec:suspension})
\begin{equation}\label{eq:phi}
\phi_W: H_{k+l+\dim(G)}(\pull;R) \to H_{k+l+\dim(G)+1}((K\times L)_G;R).
\end{equation}
For this, form the homotopy pushout of the the upper left corner of the diagram \eqref{defining-diagram}
\[
\xymatrix{
 \pull \ar[r] \ar[d] & \hat{S} \ar[d] \ar[ddr] \\
 \hat{T} \ar[r] \ar[rrd] & B \ar@{-->}[dr]^q \\ 
 & & (K \times L)_G,
 }
\]
where $q$ is the whisker map.
As in the proof of Lemma \ref{vanishing lemma}, the assumption on $S$ and  $T$ implies that  $\dim_R(\hat{S}), \dim_R(\hat{T})< k+l+\dim(G)$. Using this, we define $\phi_W$ to be the composition $q_* \circ \partial^{-1}$,
where $\partial$ is the boundary homomorphism of the Mayer-Vietoris sequence for the homotopy pushout 
\[
 0 = H_{N+1}(\hat{S};R) \oplus H_{N+1}(\hat{T};R) \to H_{N+1}(B;R) \xrightarrow{\partial} 
 H_N(\pull;R) \to  H_{N}(\hat{S};R) \oplus H_{N}(\hat{T};R) =0,
\]
where $N=k+l+\dim(G)$. By composition, we obtain a homomorphism
\begin{equation}\label{ext-prod}
\varphi: H_{k}(S;R) \otimes H_{l}(T;R) 
\xrightarrow{\phi_W \circ \mu_{BG}}
H_{k+l+\dim(G)+1}((K\times L)_G;R).
\end{equation}

\begin{lem}\label{well-definedness}
Let $S^{(i)} \ (i=1,2)$  (resp.\ $T$) be spaces which have the homotopy type of CW-complexes of dimension $k$ (resp.\ $l$). Assume we are given maps $h_S^{(i)}:S^{(i)} \to K_G$ and $h_T:T\to L_G$ and classes $\alpha^{(i)} \in H_k(S^{(i)};R)$ such that $(h^{(1)}_S)_*(\alpha^{(1)})=(h^{(2)}_S)_*(\alpha^{(2)})$. Then for every $\beta\in H_l(T;R)$  we have $\varphi(\alpha^{(1)}\otimes\beta)=\varphi(\alpha^{(2)}\otimes\beta)$.
\end{lem}
\begin{proof}
%
For each $i=1,2$, consider the diagram (\ref{defining-diagram})
 with superscript like $W^{(i)}$.
Let $h_{S'}:S'\to K_G$ be the $(k+1)$-skeleton of a CW-approximation of $K_G$.
By cellular approximation, $h^{(i)}_1$ factors, up to homotopy, as $S^{(i)} \xrightarrow{\iota^{(i)}} S' \xrightarrow{h_{S'}} K_G$
and $\iota^{(1)}_*(\alpha^{(1)})=\iota^{(2)}_*(\alpha^{(2)})$.
Consider the diagram (\ref{defining-diagram}) with $S$ replaced by $S'$.
Take the homotopy pushout of the upper-left corner to obtain
\begin{equation}\label{pushout}
\xymatrix{
 \pull' \ar[r] \ar[d] & \hat{S'} \ar[d] \ar[ddr] \\
 \hat{T} \ar[r] \ar[rrd] & B' \ar@{-->}[dr]^{q'} \\ 
 & & (K\times L)_G,
 }
\end{equation}
where $\dim_R(\hat{S'})\le N=k+l+\dim(G)$ and $q'$ is the whisker map.
By the naturality of the Mayer-Vietoris sequence,
we have the following commutative diagram of exact sequences:
\[
\xymatrix{
0 \ar[r] & H_{N+1}(B^{(i)};R) \ar[r] \ar[d] & H_N(\pull^{(i)};R) \ar[r] \ar[d]^{d^{(i)}_*} & 0 \ar[d]\\
0 \ar[r] & H_{N+1}(B';R) \ar[r] & H_N(\pull';R) \ar[r] & H_N(\hat{S'};R)\oplus 0,
}
\]
where $d^{(i)}: \pull^{(i)} \to \pull'$ is the whisker map.
Since $\mu_{BG}$ is natural we have 
$d^{(i)}_*(\mu_{BG}(\alpha^{(i)}\otimes\beta))=
\mu_{BG}(\iota^{(i)}_*(\alpha^{(i)})\otimes\beta)$,
whose right hand side does not depend on $i$.
Since the map $H_{N+1}(B';R) \to H_N(\pull';R)$ is injective,
and each $q^{(i)}$ factors up to homotopy through $B^{(i)}\to B' \xrightarrow{q'} (K\times L)_G$,
we have that $\varphi(\alpha^{(1)}\otimes\beta)=\varphi(\alpha^{(2)}\otimes\beta)$. 
\end{proof}

To define our product
for classes ${\alpha}\in H_{k}(K_G;R)$ and ${\beta}\in H_{l}(L_G;R)$,
choose two spaces $S,T$ as above, together with maps $h_S:S\to K_G$ and $h_T:T\to L_G$ 
and classes 
$\overline{\alpha}\in H_{k}(S;R)$ and $\overline{\beta}\in H_{l}(T;R)$
such that $\alpha=(h_S)_*(\overline{\alpha})$ and $\beta=(h_T)_*(\overline{\beta})$.

Using the fact that $Q\sim (K\times L)_G$ in \eqref{equivariant-square}, we define 
\[
\overline{\mu_{M_G}}(\alpha \otimes \beta) = \hat{\Delta}_G^!\circ \varphi(\overline{\alpha}\otimes\overline{\beta}).
\]
By Lemma \ref{well-definedness} we see that this is independent of the choices made.

To summarise, 
\begin{thm}\label{def-of-secondary}
Let $M$ be a smooth $R$-oriented manifold with a smooth and orientation preserving action of $G$, 
and assume that the adjoint action of $G$ on $\mathfrak{g}$ is $R$-orientation preserving. 
For the homotopy pullback \eqref{equivariant-square},
we have a homomorphism 
\[
\overline{\mu_{M_G}}:H^G_{k}(K;R)\otimes H^G_{l}(L;R)\to  H^G_{k+l+\dim(G)-\dim(M)+1}(Z;R),
\]
when $k>\dim_{R}(K)-\dim(G)$ and $l>\dim_{R}(L)-\dim(G)$.
\end{thm}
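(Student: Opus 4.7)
The plan is to carry out the construction sketched before the statement and then verify that it descends to a well-defined map on $H^G_k(K;R)\otimes H^G_l(L;R)$. Given classes $\alpha\in H^G_k(K;R)$ and $\beta\in H^G_l(L;R)$, I first pick auxiliary spaces $(S,f)$ and $(T,g)$ as in the proof of Lemma \ref{vanishing lemma}: take $S$ to be the $k$-skeleton of a CW-approximation of $K_G$ (and similarly $T$ for $L_G$), and choose lifts $\overline{\alpha}\in H_k(S;R)$, $\overline{\beta}\in H_l(T;R)$ of $\alpha,\beta$.

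Next I feed these into diagram \eqref{defining-diagram} and form the composition $\varphi=\phi_W\circ\mu_{BG}$ of \eqref{ext-prod}. The key point enabling $\phi_W=q_\ast\circ\partial^{-1}$ to exist is that the Serre spectral sequence for $K\to \hat{S}\to S$ (as in the proof of Lemma \ref{vanishing lemma}) together with $k>\dim_R(K)-\dim(G)$ gives $\dim_R(\hat{S})<N$, and similarly $\dim_R(\hat{T})<N$, where $N=k+l+\dim(G)$; this forces the Mayer--Vietoris boundary $\partial\colon H_{N+1}(B;R)\to H_N(W;R)$ to be an isomorphism. Finally I define
\[
\overline{\mu_{M_G}}(\alpha\otimes\beta)=\hat{\Delta}_G^!\circ\varphi(\overline{\alpha}\otimes\overline{\beta}),
\]
where $\hat{\Delta}_G^!$ is available because the triple $(M,G,\rho)$ is $R$-oriented by the hypothesis on $M$ and on the adjoint action together with Lemma \ref{triple-orientation}; this yields the correct target $H^G_{k+l+\dim(G)-\dim(M)+1}(Z;R)$ under $Q\simeq(K\times L)_G$ in \eqref{equivariant-square}.

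The main obstacle is \emph{well-definedness}, i.e.\ independence of the choice of $(S,f,\overline{\alpha})$ and $(T,g,\overline{\beta})$. Lemma \ref{well-definedness} already shows that the value depends only on $f_\ast(\overline{\alpha})$ (for fixed $(S,f)$, $(T,g)$, $\overline{\beta}$) by comparing with the $(k+1)$-skeleton $S'$ of a CW-approximation via the naturality of both $\mu_{BG}$ and the Mayer--Vietoris sequence. To compare two different choices $(S_1,f_1,\overline{\alpha_1})$ and $(S_2,f_2,\overline{\alpha_2})$ with common image $\alpha$, I enlarge $S_1$ and $S_2$ to a common $(k+1)$-dimensional refinement $S'$ mapping to $K_G$, through which both $f_1,f_2$ factor up to homotopy; the images of $\overline{\alpha_1},\overline{\alpha_2}$ in $H_k(S';R)$ then have identical pushforwards to $H_k(K_G;R)$, so by Lemma \ref{well-definedness} applied to $S'$ the two constructions agree. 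The symmetric argument handles $T$. Naturality of the umkehr $\hat{\Delta}_G^!$ (Lemma/Remark \ref{umkehr map for homotopy pullback}) ensures that the final step commutes with all of these comparison maps, so the definition descends.

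The remaining checks are routine: bilinearity follows from that of $\mu_{BG}$, $\phi_W$, and $\hat{\Delta}_G^!$ (each is a composition of linear homological operations), and the degree count is as claimed since $\mu_{BG}$ raises degree by $\dim(G)$, $\phi_W$ raises by $1$, and $\hat{\Delta}_G^!$ lowers by $\dim(M)$. I expect the subtle point, beyond well-definedness, to be the careful verification that taking skeleta of CW-approximations supplies surjections of the required type for \emph{every} class simultaneously; this is handled by choosing a single $CW$-approximation large enough to carry both $\overline{\alpha}$ and any chosen comparison class, which is always possible since we only need a $(k+1)$-skeleton to detect classes in degree $k$.
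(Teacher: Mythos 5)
Your proposal matches the paper's construction: define $\overline{\mu_{M_G}}$ as $\hat{\Delta}_G^!\circ\phi_W\circ\mu_{BG}$ on chosen lifts through CW complexes of the correct dimensions, and reduce well-definedness to Lemma \ref{well-definedness}; the degree count and the role of the orientation hypothesis are as you describe. One small caveat in your comparison of two choices $(S_1,f_1,\overline{\alpha_1})$ and $(S_2,f_2,\overline{\alpha_2})$: the phrase ``by Lemma \ref{well-definedness} applied to $S'$'' is a slight abuse, since $S'$ has dimension $k+1$, so $\dim_R(\hat{S'})$ may equal $N$, and thus $\phi_{W'}$ (hence $\varphi$) is not defined for $S'$. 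What you actually need, and what the \emph{proof} of Lemma \ref{well-definedness} supplies, is naturality of $\mu_{BG}$ and of the Mayer--Vietoris boundary along the maps $\iota_i\colon S_i\to S'$, together with the fact that $\partial'\colon H_{N+1}(B';R)\to H_N(\pull';R)$ is still injective. A cleaner route that stays within the literal statement of the lemma is to set $S=S_1\sqcup S_2$ (still of dimension $k$), regard $\overline{\alpha_1},\overline{\alpha_2}$ as classes in $H_k(S;R)$ with the same $f_*$-image, apply Lemma \ref{well-definedness} directly, and then identify each original construction with the one on $S$ via naturality of $\varphi$ under the inclusions $S_i\hookrightarrow S$, which is unproblematic since all the spaces involved have dimension $k$.
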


For a wider degree range, we can define another homomorphism in a similar manner.
\begin{thm}\label{def-of-join}
Assume $\dim(G)>0$ or $k,l>0$.
Given the diagram \eqref{square},
we have a homomorphism 
\[
\mu^\ast_{M_G}:H_{k}(X;R)\otimes H_{l}(Y;R)\to  H_{k+l+\dim(G)-\dim(M)+1}(X\bowtie_{M_G}Y;R),
\]
where $X\bowtie_{M_G}Y$ is the homotopy join of $X\to M_G$ and $Y\to M_G$ as in \eqref{homotopy-join}.
It satisfies
\[
\partial \circ \mu^{\ast}_{M_G} = \mu_{M_G}, \quad
u_* \circ \overline{\mu_{M_G}}=\mu^{\ast}_{M_G}
\]
where $\partial$ is the boundary operator in the Mayer-Vietoris sequence associated to the homotopy pushout
defining $X\bowtie_{M_G}Y$,
and $u$ is the composition $P\to X\to X\bowtie_{M_G}Y$.
\end{thm}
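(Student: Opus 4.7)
The plan is to construct $\mu^{\ast}_{M_G}$ by adapting the mechanism of Section \ref{sec:def-secondary} so that the Mayer-Vietoris argument is performed directly on the homotopy pushout defining $X\bowtie_{M_G}Y$, with the primary product $\mu_{M_G}$ playing the role that $\mu_{BG}$ played in the construction of $\overline{\mu_{M_G}}$.

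Given $\alpha\in H_{k}(X;R)$ and $\beta\in H_{l}(Y;R)$, first choose CW complexes $S,T$ of dimensions $k,l$, maps $f\colon S\to X$, $g\colon T\to Y$, and classes $\overline{\alpha}\in H_k(S;R)$, $\overline{\beta}\in H_l(T;R)$ with $f_*(\overline{\alpha})=\alpha$ and $g_*(\overline{\beta})=\beta$. Form the homotopy pullback $V:=S\times_{M_G}T$ and the class $c:=\mu_{M_G}(\overline{\alpha}\otimes\overline{\beta})\in H_N(V;R)$, where $N=k+l+\dim(G)-\dim(M)$. In the Mayer-Vietoris long exact sequence for the homotopy pushout $S\bowtie_{M_G}T$ of $S\leftarrow V\to T$,
\[
H_{N+1}(S)\oplus H_{N+1}(T)\to H_{N+1}(S\bowtie_{M_G}T)\xrightarrow{\partial}H_N(V)\xrightarrow{\psi}H_N(S)\oplus H_N(T),
\]
I apply Proposition \ref{primary-property}(3) to the two maps of diagrams over $M_G$ obtained by replacing $T$ (resp.\ $S$) with $M_G$; this identifies the components of $\psi(c)$ with the equivariant intersection products $\mu_{M_G}(\overline{\alpha}\otimes (b\circ g)_*\overline{\beta})\in H_N(S;R)$ and $\mu_{M_G}((a\circ f)_*\overline{\alpha}\otimes \overline{\beta})\in H_N(T;R)$, where $a,b$ denote the structure maps to $M_G$. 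Since $M$ is the fibre of $M_G\to BG$, Theorem \ref{vanishing} forces both products to vanish under the hypothesis $k,l>\dim_R(M)-\dim(G)$. Hence $c\in\mathrm{image}(\partial)$, and I define
\[
\mu^{\ast}_{M_G}(\alpha\otimes\beta)=(q_{\bowtie})_*(\widetilde{c}),
\]
where $\widetilde{c}\in H_{N+1}(S\bowtie_{M_G}T;R)$ is a preimage of $c$ under $\partial$, and $q_{\bowtie}\colon S\bowtie_{M_G}T\to X\bowtie_{M_G}Y$ is the whisker map induced by $(f,g)$.

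Well-definedness (independence of the choices of $S,T,\overline{\alpha},\overline{\beta}$ and of the preimage $\widetilde{c}$) follows by a CW-approximation argument parallel to Lemma \ref{well-definedness}: any two choices embed into a common refinement, and the naturality of the Mayer-Vietoris sequence together with the naturality of $\mu_{M_G}$ force the images in $H_{N+1}(X\bowtie_{M_G}Y;R)$ to agree. The identity $\partial\circ\mu^{\ast}_{M_G}=\mu_{M_G}$ is then immediate by combining the naturality of $\partial$ under $q_{\bowtie}$ (which yields $\partial\circ (q_{\bowtie})_*=(V\to P)_*\circ\partial$) with the naturality of $\mu_{M_G}$ under $(f,g)$ (which transports $c$ to $\mu_{M_G}(\alpha\otimes\beta)\in H_N(P;R)$). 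For the identity $u_*\circ\overline{\mu_{M_G}}=\mu^{\ast}_{M_G}$, valid when both sides are defined, one factors $\mu_{M_G}=\hat{\Delta}_G^!\circ\hat{p}^\natural\circ(\times)$ and observes that both constructions pass through the same class $\hat{p}^\natural(\overline{\alpha}\times\overline{\beta})$ on the pullback over $BG$; they differ only in the order in which the umkehr $\hat{\Delta}_G^!$ and the Mayer-Vietoris inversion are applied, and these commute up to the pushforward $u_*$ thanks to the compatibility of umkehr maps with pullbacks (Remark \ref{umkehr map for homotopy pullback}) applied to the induced map of pushouts.

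The main obstacle is the well-definedness step. Under the weaker hypothesis $k,l>\dim_R(M)-\dim(G)$ (rather than $k>\dim_R(F_X)-\dim(G)$, $l>\dim_R(F_Y)-\dim(G)$ as in $\overline{\mu_{M_G}}$), the outer groups $H_{N+1}(S)\oplus H_{N+1}(T)$ in the Mayer-Vietoris sequence need not vanish, so $\widetilde{c}$ is only well-defined modulo the image of this map; one must show that this ambiguity is killed by $(q_{\bowtie})_*$, which requires a careful cellular-approximation argument mirroring that of Lemma \ref{well-definedness}, using the freedom to subdivide $S,T$ to arrange the compatibility without altering $\overline{\alpha},\overline{\beta}$.
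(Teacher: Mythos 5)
The paper gives no explicit proof of Theorem~\ref{def-of-join}, only the phrase ``in a similar manner,'' so the comparison must be against the natural construction suggested by \S\ref{sec:def-secondary}. Your proposal correctly identifies that construction: form $c=\mu_{M_G}(\overline\alpha\otimes\overline\beta)\in H_N(V)$ on the small pullback $V=S\times_{M_G}T$, check $c\in\mathrm{im}(\partial)$ for the Mayer--Vietoris sequence of $S\bowtie_{M_G}T$, lift, and whisker to $X\bowtie_{M_G}Y$. Your vanishing argument for $\psi(c)=0$ via Proposition~\ref{primary-property}(3) applied to the map of diagrams $(S,T)\to(S,M_G)$ over $M_G$, followed by Theorem~\ref{vanishing} with $L=M$, is exactly right and uses the hypothesis $k,l>\dim_R(M)-\dim(G)$ in the intended way. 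The derivation of $\partial\circ\mu^{\ast}_{M_G}=\mu_{M_G}$ by naturality is correct.

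The one genuine flaw is in how you handle well-definedness. You propose to show that the ambiguity coming from $H_{N+1}(S)\oplus H_{N+1}(T)\to H_{N+1}(S\bowtie_{M_G}T)$ is ``killed by $(q_\bowtie)_*$.'' That cannot work in general: if $\tilde c_1-\tilde c_2=(i_S)_*(a)+(i_T)_*(b)$, then
\[
(q_\bowtie)_*(\tilde c_1)-(q_\bowtie)_*(\tilde c_2)=(i_X)_*f_*(a)+(i_Y)_*g_*(b),
\]
and there is no reason for $(i_X)_*\circ f_*$ or $(i_Y)_*\circ g_*$ to vanish; subdividing $S,T$ also cannot change $H_{N+1}(S)$ or $H_{N+1}(T)$. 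The correct observation is that under the stated degree hypothesis one in fact has $H_{N+1}(S)=H_{N+1}(T)=0$ as long as $\dim_R(M)\ge\dim(M)-1$ (e.g.\ $M$ closed $R$-oriented): since $\dim(S)=k$ and $N+1-k=l+\dim(G)-\dim(M)+1$, the hypothesis $l>\dim_R(M)-\dim(G)$ then gives $N+1>k$, and symmetrically for $T$. With the outer groups vanishing, $\partial^{-1}$ is unambiguous and independence of $S,T,\overline\alpha,\overline\beta$ follows by the same argument as Lemma~\ref{well-definedness}, not by an appeal to $(q_\bowtie)_*$ absorbing an ambiguity. You should replace the last paragraph with this observation. (Note also that the hypotheses you cite for $\overline{\mu_{M_G}}$ are those of Remark~\ref{alternative-def-of-secondary}, not Theorem~\ref{def-of-secondary}.) The sketch for $u_*\circ\overline{\mu_{M_G}}=\mu^{\ast}_{M_G}$ is plausible but rather compressed; a cleaner route is to compare, as in the proof of Proposition~\ref{secondary-restriction}, the two Mayer--Vietoris inversions through the map of pushout squares induced by $\hat\Delta_G^!$, rather than appealing informally to Remark~\ref{umkehr map for homotopy pullback}.
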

\begin{proof}
For classes ${\alpha}\in H_{k}(X;R)$ and ${\beta}\in H_{l}(Y;R)$,
choose CW-complexes $S, T$, and classes $\overline{\alpha}, \overline{\beta}$ as in the definition of 
$\overline{\mu_{M_G}}$.
Consider the diagram \eqref{defining-diagram}.
We define a map $\varphi'$ similar to $\varphi$ by the composition
\begin{align*}
H_k(S;R)\otimes H_l(T;R)& \xrightarrow{\mu_{BG}} H_{k+l+\dim(G)}(W;R)
 \xrightarrow{{\partial'}^{-1}} H_{k+l+\dim(G)+1}(S\bowtie_{BG}T;R)\\
 &\xrightarrow{q_*} H_{k+l+\dim(G)+1}(L_G\bowtie_{BG}K_G;R),
\end{align*}
where $\partial'$ is
the boundary operator in the Mayer-Vietoris sequence associated to the homotopy pushout as in the definition of \eqref{eq:phi},
and $q_*$ is induced by the whisker map $q: S\bowtie_{BG}T\to L_G\bowtie_{BG}K_G$.
Note by the degree condition, $\partial'$ is an isomorphism.
Composing $\varphi'$ with 
$\hat{\Delta}_G^!:
H_{k+l+\dim(G)+1}(L_G\bowtie_{BG}K_G;R) \to
H_{k+l+\dim(G)-\dim(M)+1}(X\bowtie_{M_G}Y;R)$,
we define 
\[
\mu^*_{M_G}(\alpha \otimes \beta) = 
\hat{\Delta}_G^!\circ \varphi'(\overline{\alpha}\otimes\overline{\beta}).
\]
Similarly to Lemma \ref{well-definedness}, we can show that it does not depend on the choices.

Its relation to $\mu_{M_G}$ and $\overline{\mu_{M_G}}$ follows easily from the definition.
\end{proof}

\begin{ex}\label{ex:ganea}
 Let $G^{*n}$ be the $n$-fold join of $G$ and 
 $B_nG=G^{*(n+1)}/G \xrightarrow{f_n} BG$ be the $n$-th stage of Milnor's construction of the classifying space of $G$.
Consider the following diagram, where the outer square is a homotopy pullback
 and the upper-left triangle is a homotopy pushout:
\begin{equation*}
\xymatrix{
(G^{*k}\times G^{*l})/G \ar[rr] \ar[dd]  & &  B_lG \ar[dd]^{f_l} \ar[dl] \\
& B_{k+l+1}G \\ 
B_kG \ar[rr]^{f_k} \ar[ur] & & BG.
}
\end{equation*}
 Here, we identify $B_{k+l+1}G \sim B_kG \bowtie_{BG} B_lG$. 
 Denote a generator of $H_{(\dim(G)+1)n}(B_nG;\Z)\cong \Z$ by $\alpha_n$.
 Then, we have $\mu^\ast_{BG}(\alpha_k, \alpha_l) = \pm \alpha_{k+l+1}$.
\end{ex}

\begin{rem}\label{alternative-def-of-secondary}
Under a stronger assumption on degrees than Theorem \ref{def-of-secondary},
we can give an alternative definition of $\overline{\mu_{M_G}}$.
For classes ${\alpha}\in H_{k}(X;R)$ and ${\beta}\in H_{l}(Y;R)$,
choose CW-complexes $S, T$, and classes $\overline{\alpha}, \overline{\beta}$ as in the definition of 
$\overline{\mu_{M_G}}$.
Consider the following diagram (instead of \eqref{defining-diagram})
with all the squares being homotopy pullback
\[
\xymatrix{
\overline{\pull} \ar[r] \ar[d] & \bar{S} \ar[r] \ar[d] & S \ar[d] \\
\bar{T}\ar[r] \ar[d] & Z_G \ar[r] \ar[d] & K_G \ar[d]^{} \\
T \ar[r] & L_G \ar[r]^{} & M_G.}
\]
Assume that $\dim_R(F_L)-\dim(G)+\dim(M)<l$ and $\dim_R(F_K)-\dim(G)+\dim(M)<k$,
where $F_L$ (resp. $F_K$) is the homotopy fibre of $L_G\xrightarrow{g_G} M_G$
(resp. of $K_G\xrightarrow{f_G} M_G$). Since $\dim_R(\bar{S})\le \dim(S)+\dim_R(F_L)$ and
$\dim_R(\bar{T})\le \dim(T)+\dim_R(F_K)$, the following is well-defined
\[
H_{k+l+\dim(G)-\dim(M)}(\overline{\pull};R)
 \xrightarrow{\phi_{\overline{W}}}H_{k+l+\dim(G)-\dim(M)+1}(Z_G;R).
\]
We consider the map
\[
\widetilde{\mu_{M_G}}(\alpha,\beta)=
\phi_{\overline{W}} \circ \mu_{M_G}(\overline{\alpha}, \overline{\beta}).
\]
Observe that $F_L$ is also the homotopy fibre of $L\xrightarrow{g} M$.
Thus, $\dim_R(L)\le\dim_R(F_L)+ \dim(M)$.
We see that the assumption
$\dim_R(F_L)-\dim(G)+\dim(M)<l$ and $\dim_R(F_K)-\dim(G)+\dim(M)<k$
implies $\dim_R(L)-\dim(G)<l$ and $\dim_R(K)-\dim(G)<k$, which is the condition required 
for $\overline{\mu_{M_G}}$ to be defined (Theorem \ref{def-of-secondary}).
In this case, we can see that $\widetilde{\mu_{M_G}}$ and $\overline{\mu_{M_G}}$ agree up to sign 
by Lemma \ref{lem:phi-commutes-with-umkehr}.
\end{rem}

\section{Properties of the secondary product $\overline{\mu_{M_G}}$}\label{sec:applications-secondary}

In this section, we discuss properties of the secondary product $\overline{\mu_{M_G}}$.
We prove a vanishing result in some cases. 
On the other hand, we see interesting non-trivial examples.

\begin{prop}\label{secondary-naturality-wrt-equivariant-maps}
(Naturality) The product $\overline{\mu_{M_G}}$ is natural with respect to ring homomorphisms $R\to R'$ of the coefficients and with respect to equivariant maps $K\to K'$ and $L\to L'$ over $M$ in degrees where it is defined. That is, for a commutative diagram
of equivariant maps
\[
\xymatrix@!0@=45pt{
 & Z \ar[rr]\ar'[d][dd]^{d} \ar[dl]
   & & K \ar[dd]^i \ar[dl]
\\
 L\ar[rr]\ar[dd]^j
 & & M \ar@{=}[dd]
\\
 & Z' \ar'[r][rr] \ar[dl]
   & & K' \ar[dl]
\\
 L' \ar[rr]
 & & M
}
\]
we have $\overline{\mu_{M_G}}(\alpha,\beta)=d_*\circ \overline{\mu_{M_G}}(i_*(\alpha), j_*(\beta))$.

\end{prop}
\begin{proof}
The first assertion is obvious. The second one follows from Lemma \ref{well-definedness}.
\end{proof}

As we see below, 
$\overline{\mu_{M_G}}$ captures the information of the singular part of the action.
\begin{cor}\label{singular set}
Assume that $K$ is a finite dimensional (rigid) $G$-CW-complex,
and denote by $\dim(K)$ its topological (covering) dimension.
Let $n$ be either the maximal dimension of a free cell $D^n\times G$ in $K$ or $-1$ if there is no such cell.
Then, $n\leq \dim(K)-\dim(G)$ since $\dim(K)$ is equal to $\max(\dim(G/H\times D^n))$, where $G/H \times D^n$ runs over all the $G$-cells of $K$ \cite{Illman}.
Let $K_\Sigma$ be the sub-complex consisting of those points with non-trivial stabiliser.
The inclusion $K_\Sigma \hookrightarrow K$ induces 
an isomorphism 
\[
H^G_*(K_\Sigma;R)\cong H^G_*(K;R)  \quad (*>n),
\]
which is compatible with $\overline{\mu_{M_G}}$ by the naturality.
This implies that we can compute 
$\overline{\mu_{M_G}}:H^G_{k}(K;R)\otimes H^G_{l}(L;R)\to  H^G_{k+l+\dim(G)-\dim(M)+1}(Z;R)$
for $k> \dim(K)-\dim(G)$ and $l> \dim(L)-\dim(G)$
by restricting it to
$H^G_{k}(K_\Sigma;R)\otimes H^G_{l}(L_\Sigma;R)$.
\end{cor}

Similarly to the primary product, $\overline{\mu_{M_G}}$ commutes with restriction to closed subgroups.

\begin{prop}[Restriction]\label{secondary-restriction}
 Let $H\subseteq G$ be a closed subgroup,
 where the adjoint actions of $H$ and $G$ on their Lie algebras are $R$-orientation preserving.
 Denote by $i$ the classifying map of the inclusion.
 Assume that $M$ is an $R$-oriented manifold with an $R$-orientation preserving action of $G$.
Then, the secondary products $\overline{\mu_{M_G}}$ and $\overline{\mu_{M_H}}$ are compatible with 
the Grothendieck bundle transfer for $i$, that is, the following diagram commutes up to sign:
\[
\xymatrix{
H_k(K_G;R)\otimes H_l(L_G;R) \ar[r]^{\overline{\mu_{M_G}}} \ar[d]^{i^\natural \otimes i^\natural}
 & H_{k+l+\dim(G)-\dim(M)+1}(Z_G;R) \ar[d]^{i^\natural} \\
H_{k+N}(K_H;R)\otimes H_{l+N}(L_H;R) \ar[r]^{\overline{\mu_{M_H}}} & H_{k+l+2N+\dim(H)-\dim(M)+1}(Z_H;R),
}
\]
where $N=\dim(G)-\dim(H)$.
\end{prop}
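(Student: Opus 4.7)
The plan is to run the construction of $\overline{\mu_{M_G}}$ in parallel for $G$ and $H$, and verify that the transfer $i^\natural$ intertwines the two constructions at every step.

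Given $\alpha \in H_k(K_G;R)$ and $\beta \in H_l(L_G;R)$, choose $S \to K_G$ and $T \to L_G$ of dimensions $k, l$ with lifts $\bar\alpha \in H_k(S;R)$, $\bar\beta \in H_l(T;R)$. Form the homotopy pullbacks $S_H := S \times_{K_G} K_H$ and $T_H := T \times_{L_G} L_H$; since $K_H \simeq K_G \times_{BG} BH$ and similarly for $L_H$, each is a $G/H$-bundle over the corresponding $G$-space, so that $\dim_R(S_H) \le k+N$ and $\dim_R(T_H) \le l+N$. This bound is enough to run the construction of $\overline{\mu_{M_H}}$ using $S_H$ and $T_H$ in place of genuine CW approximations (after, if needed, a mild CW-refinement to meet the strict dimension hypothesis in the proof of Lemma~\ref{well-definedness}). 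By naturality of the Grothendieck transfer with respect to pullback squares of bundles, $i^\natural\bar\alpha$ lifts $i^\natural\alpha$, and similarly for $\beta$. Crucially, the entire diagram \eqref{defining-diagram} on the $H$-side is obtained by pulling back the $G$-side diagram along $BH\to BG$, so every pair of corresponding spaces forms a $G/H$-bundle.

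Applying Proposition~\ref{primary-restriction} to the outer square of \eqref{defining-diagram} (viewing $S,T$ as the $K_G,L_G$ of that proposition with $M=\mathrm{pt}$) yields
\[
 \mu_{BH}(i^\natural\bar\alpha \otimes i^\natural\bar\beta) \;=\; \pm\, i^\natural\, \mu_{BG}(\bar\alpha \otimes \bar\beta) \quad \text{in } H_*(W_H;R).
\]
The next step is to verify that $\phi_W$ also commutes with $i^\natural$ up to sign. Using the alternative description of $\phi$ as the composition of the cross product with $[(I,\partial I)]$, the induced map of a commuting homotopy $\mathcal{H}\colon W\times I \to (K\times L)_G$, and the excision isomorphism $H_{N+1}((K\times L)_G, \hat S \sqcup \hat T;R) \cong H_{N+1}((K\times L)_G;R)$, this reduces to two facts: the $H$-version of $\mathcal{H}$ can be chosen as the bundle-pullback of the $G$-version, and the transfer is natural with respect to such pullback squares and satisfies $i^\natural(x\times y) = (i^\natural x)\times y$. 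The excision isomorphism persists on the $H$-side because the required dimensional bounds propagate through the $G/H$-bundle structure.

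Combining these two identities shows that the auxiliary homomorphism $\varphi$ of \eqref{ext-prod} is intertwined by $i^\natural$ up to sign. Finally, the umkehr $\hat\Delta_G^!$ commutes with $i^\natural$ up to sign by the same cap-product argument used in the first half of the proof of Proposition~\ref{primary-restriction}, applied to the pullback of the square relating $\Delta_H$ and $\Delta_G$. Chaining these three compatibilities produces the desired identity $i^\natural\circ\overline{\mu_{M_G}} = \pm\, \overline{\mu_{M_H}}\circ(i^\natural\otimes i^\natural)$. The main obstacle is the commutativity $\phi_{W_H}\circ i^\natural = \pm\, i^\natural\circ \phi_{W_G}$: while each individual ingredient (Mayer--Vietoris boundary, excision, cross product) is natural, one has to identify the $H$-version of the homotopy pushout and of the commuting homotopy as genuine bundle-theoretic pullbacks of the $G$-versions, which ultimately relies on the Borel-construction description of the defining square \eqref{equivariant-square}.
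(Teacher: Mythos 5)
Your proposal is correct and follows essentially the same strategy as the paper's proof: decompose $\overline{\mu_{M_G}}$ into the cross product, $\hat p^\natural$, $\phi_W$, and $\hat\Delta_G^!$, invoke Proposition~\ref{primary-restriction} for the first three pieces, and handle $\phi_W$ separately via its alternative description (cross product with $[(I,\partial I)]$, the commuting homotopy $H$, and excision), using naturality of the relative transfer. Your extra care about choosing $S_H,T_H$ as pullbacks and checking dimension bounds is a detail the paper leaves implicit, but it does not constitute a different route.
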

\begin{proof}
The secondary product $\overline{\mu_{M_G}}$ is defined as the 
composition of the cross product, $\hat{p}_G^\natural, \phi_\pull$, and $\hat{\Delta}_G^!$. 
We already showed in the proof of Proposition \ref{primary-restriction}
that the cross product, $\hat{p}_G^\natural$, and $\hat{\Delta}_G^!$ commute with the Grothendieck bundle transfer up to sign. 
Hence, it is enough to see the same is true for $\phi_\pull$,
which is shown in Lemma \ref{lem:phi-commutes-with-umkehr}.
\end{proof}

{Just as Proposition \ref{primary-extension},
we have a naturality of $\overline{\mu_{M_G}}$ with respect to group extensions.}
\begin{prop}[Extension]\label{secondary-extension}
Let $1\to N \xrightarrow{} G \xrightarrow{\gamma} G/N \to 1$
be a short exact sequence of groups.
Assume that $N$ acts on $M$ trivially
and that $(M,G,\rho)$ and $(M,G/N,\rho)$ are oriented compatibly with respect to $\gamma$.
Under the same notation as in Proposition \ref{primary-extension}, we have
\[
 \overline{\mu_{M_G}}=\pm d^\natural \circ \overline{\mu_{M_{G/N}}}.
\]
\end{prop}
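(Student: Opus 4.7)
The plan is to follow the proof of Proposition \ref{secondary-restriction} essentially line by line, substituting the Grothendieck bundle transfer $i^\natural$ associated to the subgroup inclusion $H \subseteq G$ (with fibre $G/H$) by the transfer $d^\natural$ associated to the group extension $N \trianglelefteq G$ (with fibre $N$). After choosing CW approximations $S \to K_G$ and $T \to L_G$ of dimensions $k, l$ representing the given homology classes, both $\overline{\mu_{M_G}}$ and $\overline{\mu_{M_{G/N}}}$ are, by construction, compositions of the cross product, $\hat{p}^\natural$, $\phi_W$, and $\hat{\Delta}_G^!$, applied to lifts in $H_*(S) \otimes H_*(T)$ that are common to both settings.

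By the argument used to prove Proposition \ref{primary-extension}, each of the cross product, $\hat{p}^\natural$, and $\hat{\Delta}_G^!$ commutes up to sign with $d^\natural$ (the first by naturality of the cross product with the Grothendieck transfer, the second because $\hat{p}$ and $\hat{p}'$ sit in a bundle map whose vertical fibres are $N$, and the third by the naturality of the cap product under the pullback of a Thom class). Consequently, to prove $\overline{\mu_{M_G}} = \pm d^\natural \circ \overline{\mu_{M_{G/N}}}$ it remains to establish $\phi_{\hat{W}} \circ d^\natural = \pm d^\natural \circ \phi_W$.

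For this last identity I would use the alternative description of $\phi_W$ introduced after Theorem \ref{def-of-join}, as a cross product with the class $[(I,\partial I)]$ followed by the induced map of a commuting homotopy $H: A \times I \to D$. The key point is that $d$ is a fibre bundle with fibre $N$, so pulling the entire homotopy-pushout data back along $d$ produces a compatible relative bundle map $d: (\hat{D}, \hat{B} \sqcup \hat{C}) \to (D, B \sqcup C)$ together with a pulled-back homotopy $\hat{H}: \hat{A} \times I \to \hat{D}$. Combining the naturality of the cross product with the Grothendieck transfer on the trivial bundle $A \times I$, and the naturality of the relative version of $d^\natural$ under $H$, gives the required compatibility up to sign.

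The one place that genuinely requires care is the sign bookkeeping coming from orientations. In the extension setting the orientation of $d$ is induced, via Lemma \ref{bundle-of-tangents}, by those of $\Delta_{BG}$ and $j$ (which in turn come from $\Delta_{B(G/N)}$ and the compatible orientations of $ad(EG)$, $ad(E(G/N))$), and one must check that this orientation matches up under the pullback of the pushout square defining $\phi_W$. Structurally this is the same verification as in the restriction case, so once it is done the sign in $\overline{\mu_{M_G}} = \pm d^\natural \circ \overline{\mu_{M_{G/N}}}$ is the composite of the signs tracked in Proposition \ref{primary-extension} and in the $\phi_W$-step above, exactly as in Proposition \ref{secondary-restriction}.
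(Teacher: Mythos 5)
Your proposal is correct and follows exactly the route the paper intends: the paper states Proposition \ref{secondary-extension} without a proof, but its placement after the explicit proof of Proposition \ref{secondary-restriction} (and after the phrase ``just as Proposition \ref{primary-extension}'') makes clear that the intended argument is precisely yours, namely to substitute the fibre bundle $d$ with fibre $N$ for the fibre bundle $i$ with fibre $G/H$, reuse Proposition \ref{primary-extension} to handle the cross product, $\hat{p}^\natural$, and $\hat{\Delta}_G^!$ steps, and rerun the relative-bundle-map argument from the proof of Proposition \ref{secondary-restriction} to handle the $\phi_W$ step. Your observation that the same CW models $S\to X$ and $T\to Y$ serve both constructions (since they differ only in the base $M_G$ vs.\ $M_{G/N}$, not in $X$ or $Y$) is the right way to organise this, and your treatment of the orientation of $d$ via Lemma \ref{bundle-of-tangents} matches the paper's.
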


Next, we show a vanishing of $\overline{\mu_{M_G}}$ for direct products.
\begin{thm}[Vanishing of the secondary product]\label{vanishing-general}
Let $G=G_1 \times G_2$ act component-wise on
$K=K_1\times K_2, L=L_1\times L_2$ and $M=M_1\times M_2$.
Set $k=k_1+k_2$ and $l=l_1+l_2$.
When the secondary products for both
\[
\xymatrix{
(Z_1)_{G_1} \ar[r] \ar[d] & (K_1)_{G_1} \ar[d] \\
(L_1)_{G_1} \ar[r] & (M_1)_{G_1}
} \quad
\xymatrix{
(Z_2)_{G_2} \ar[r] \ar[d] & (K_2)_{G_2} \ar[d] \\
(L_2)_{G_2} \ar[r] & (M_2)_{G_2}
}
\]
are defined, the secondary product 
\[ 
\overline{\mu_{M_G}}: H^{G}_{k}(K;R) \otimes 
 H^G_{l}(L;R) \to  H^G_{k+l+\dim(G)-\dim(M)+1}({Z};R),
\]
{vanishes on the image of the cross product}
\[
\big( H^{G_1}_{k_1}(K_1;R) \otimes  H^{G_2}_{k_2}(K_2;R) \big) \otimes  
 \big(H^{G_1}_{l_1}(L_1;R)  \otimes H^{G_2}_{l_2}(L_2;R)\big)
 \xrightarrow{\times \otimes \times} 
 H^{G}_{k}(K;R) \otimes 
 H^G_{l}(L;R).
\]
\end{thm}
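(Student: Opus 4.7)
\medskip
\noindent\textbf{Proof proposal.} The plan is to reduce the problem to showing that $\phi_W$ vanishes on the appropriate cross product class and then exploit a product decomposition of a conveniently chosen homotopy. Pick CW complexes $S_i$ of dimension $k_i$ and $T_i$ of dimension $l_i$ together with maps $f_i\colon S_i\to (K_i)_{G_i}$, $g_i\colon T_i\to (L_i)_{G_i}$, and classes $\bar\alpha_i\in H_{k_i}(S_i;R)$, $\bar\beta_i\in H_{l_i}(T_i;R)$ that lift $\alpha_i$ and $\beta_i$, as in the construction of $\overline{\mu_{M_G}}$. Then $S=S_1\times S_2$ and $T=T_1\times T_2$ are CW complexes of dimensions $k$ and $l$ over $K_G$ and $L_G$, and since everything in \eqref{defining-diagram} splits as a product, we have canonical identifications $W=W_1\times W_2$, $\hat S=\hat S_1\times\hat S_2$, $\hat T=\hat T_1\times\hat T_2$, and $D:=(K\times L)_G=D_1\times D_2$. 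The compatibility of both the Grothendieck bundle transfer and the cross product with products (and with $\hat\Delta_G^!$, which factors as $\hat\Delta_{G_1}^!\times\hat\Delta_{G_2}^!$) reduces the assertion to showing that $\phi_W$ vanishes on the class $w:=w_1\times w_2$, where $w_i:=\mu_{BG_i}(\bar\alpha_i\otimes\bar\beta_i)\in H_{k_i+l_i+\dim(G_i)}(W_i;R)$.

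To analyse $\phi_W(w_1\times w_2)$, I would use the alternative description of $\phi$ in terms of a homotopy $H\colon W\times I\to D$ between $f\circ\hat g$ and $g\circ\hat f$. The crucial step is to choose $H$ as a concatenation $H=H^{(1)}\ast H^{(2)}$, where $H^{(1)}$ runs in $W\times[0,\tfrac12]$ and equals $H_1\times(f_2\circ\hat g_2)$ (with $H_i$ a chosen component homotopy), while $H^{(2)}$ runs in $W\times[\tfrac12,1]$ and equals $(g_1\circ\hat f_1)\times H_2$. By excision and the additivity of $\phi$ under concatenation of homotopies (which follows from the definition and from splitting $[I,\partial I]=[[0,\tfrac12],\partial]+[[\tfrac12,1],\partial]$ in $H_1(I,\partial I;R)$), we obtain
\[
\phi_W(w_1\times w_2)=\phi^{(1)}(w_1\times w_2)+\phi^{(2)}(w_1\times w_2),
\]
where naturality of the cross product with respect to the relative Eilenberg--Zilber map gives
\[
\phi^{(1)}(w_1\times w_2)=\pm\,\phi_{W_1}(w_1)\times (f_2\circ\hat g_2)_*(w_2),\qquad
\phi^{(2)}(w_1\times w_2)=\pm\,(g_1\circ\hat f_1)_*(w_1)\times\phi_{W_2}(w_2).
\]

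Now I invoke the primary vanishing. The hypothesis that $\overline{\mu_{M_{G_i}}}$ is defined for both $i=1,2$ forces $k_i>\dim_R(K_i)-\dim(G_i)$ and $l_i>\dim_R(L_i)-\dim(G_i)$, so by Lemma \ref{vanishing lemma} each composition $(f_i\circ\hat g_i)_*\circ\mu_{BG_i}=\mu_{BG_i}\colon H_{k_i}(S_i;R)\otimes H_{l_i}(T_i;R)\to H_{k_i+l_i+\dim(G_i)}(D_i;R)$ is zero, and likewise for $(g_i\circ\hat f_i)_*\circ\mu_{BG_i}$. Thus $(f_2\circ\hat g_2)_*(w_2)=0$ and $(g_1\circ\hat f_1)_*(w_1)=0$, so both summands above are zero. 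Applying $\hat\Delta_G^!$ to the already vanishing class $\phi_W(w_1\times w_2)$ yields $\overline{\mu_{M_G}}(\alpha\otimes\beta)=0$, which is the claim.

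\medskip
\noindent\textbf{Main obstacle.} The formal manipulations with cross products, umkehr maps, and the Grothendieck transfer are essentially bookkeeping by naturality, so the substantive step is the additivity
$\phi_W=\phi^{(1)}+\phi^{(2)}$ induced by concatenation of homotopies together with the product formula for each summand. I would prove this by working relative to $B\sqcup C$ in the target and using the fundamental class decomposition of $[I,\partial I]$ combined with the Eilenberg--Zilber theorem; any sign ambiguity is harmless because both summands vanish. A minor verification is that the degree assumption $N>\dim_R(\hat S),\dim_R(\hat T)$ holds for the product (which it does by the same Serre spectral sequence argument as in Lemma \ref{vanishing lemma}), so $\phi_W$ is well-defined and independent of the choice of homotopy, legitimising the use of the concatenated $H$.
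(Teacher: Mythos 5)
Your proposal is correct and arrives at the conclusion by essentially the same mechanism as the paper: reduce the cross-product class to a form in which the primary product on one factor is visible, and then invoke Lemma \ref{vanishing lemma} (equivalently Theorem \ref{vanishing}) to kill it. The paper's proof is terser: it asserts the one-term product formula
\[
\overline{\mu_{M_G}}\bigl((\alpha_1\times\alpha_2)\otimes(\beta_1\times\beta_2)\bigr)
=\pm\,\overline{\mu_{(M_1)_{G_1}}}(\alpha_1\otimes\beta_1)\times\mu_{(M_2)_{G_2}}(\alpha_2\otimes\beta_2)
\]
without derivation and then observes that $\mu_{(M_2)_{G_2}}(\alpha_2\otimes\beta_2)=0$. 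Your route instead unpacks $\phi_W$ via a concatenated homotopy and obtains the honest two-term ``Leibniz rule'' $\phi_W(w_1\times w_2)=\pm\phi_{W_1}(w_1)\times\mu_{BG_2}(\ldots)\,\pm\,\mu_{BG_1}(\ldots)\times\phi_{W_2}(w_2)$, with both terms vanishing. The two identities are compatible precisely because the ``missing'' term vanishes under the stated hypotheses; yours is arguably the more transparent derivation.

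One step of your argument deserves more care. In the concatenated homotopy $H^{(1)}\ast H^{(2)}$ the middle time slice $W\times\{\tfrac12\}$ maps into $\hat T_1\times\hat S_2$, a space that is neither $\hat S=\hat S_1\times\hat S_2$ nor $\hat T=\hat T_1\times\hat T_2$. Consequently the decomposition $[(I,\partial I)]=[([0,\tfrac12],\partial)]+[([\tfrac12,1],\partial)]$ cannot literally be performed in $H_1(I,\partial I)$, and the additivity $\phi_W=\phi^{(1)}+\phi^{(2)}$ does not take place relative to $\hat S\sqcup\hat T$ alone. The fix is to work relative to the larger subspace $E=\hat S\cup\hat T\cup(\hat T_1\times\hat S_2)$ (after a mapping-cylinder replacement of the maps to $D=(K\times L)_G$). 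The degree hypotheses give $\dim_R(\hat T_1\times\hat S_2)<N$ in addition to $\dim_R(\hat S),\dim_R(\hat T)<N$, so $H_{N+1}(D;R)\to H_{N+1}(D,E;R)$ is still an isomorphism; the class $[(I,\partial I)]$ then does decompose in $H_1(I,\{0,\tfrac12,1\})$, and the cross-product and pushforward steps you describe go through. With this correction your proof is complete.
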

\begin{proof}
It is not difficult to show that
$\overline{\mu_{M_G}}(\alpha_1\times \beta_1\otimes \alpha_2\times \beta_2)
=\pm \overline{\mu_{(M_1)_{G_1}}}(\alpha_1\otimes \alpha_2)\times \mu_{(M_2)_{G_2}}(\beta_1\otimes \beta_2)$. This is trivial since $\mu_{(M_2)_{G_2}}(\beta_1\otimes \beta_2)=0$ when the secondary product is defined.
\end{proof}

Taking $M_i=K_i=L_i$ to be a point we get the following corollary, which will be used later.
\begin{cor}\label{vanishing-product}
If $k_i,l_i>-\dim_R(G_i)$ 
for $i=1,2$, then the secondary product
\[
 \overline{\mu_{BG}}:
H_{k}(BG;R) \otimes 
 H_{l}(BG;R)\to
 H_{k+l+\dim(G)+1}(BG;R)
\]
{vanishes on the image of the cross product}
\[
 \big( H_{k_1}(BG_1;R) \otimes  H_{k_2}(BG_2;R) \big) \otimes  
 \big(H_{l_1}(BG_1;R)  \otimes H_{l_2}(BG_2;R)\big)
 \xrightarrow{\times \otimes \times} 
 H_{k}(BG;R) \otimes 
 H_{l}(BG;R).
\]
\end{cor}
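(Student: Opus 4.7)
The plan is to derive this as a direct specialisation of Theorem \ref{vanishing-general} to the case $M_i = K_i = L_i = pt$ for $i = 1, 2$. Under the standard identification $B(G_1 \times G_2) \simeq BG_1 \times BG_2$, each Borel construction $(pt)_{G_i}$ collapses to $BG_i$, the group $G = G_1 \times G_2$ acts trivially on the one-point $M = pt$, and the map $\overline{\mu_{M_G}}$ produced by Theorem \ref{vanishing-general} coincides with $\overline{\mu_{BG}}$. With these identifications the cross product appearing in the corollary becomes exactly the cross product appearing in Theorem \ref{vanishing-general}, so applying that theorem to any pair of decomposable classes $\alpha_1 \times \alpha_2 \in H_k(BG;R)$ and $\beta_1 \times \beta_2 \in H_l(BG;R)$ yields
\[
\overline{\mu_{BG}}\bigl((\alpha_1 \times \alpha_2) \otimes (\beta_1 \times \beta_2)\bigr) = 0,
\]
and then bilinearity extends the vanishing to the full image of the cross product.

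The only step requiring checking is that the hypotheses of Theorem \ref{vanishing-general} are indeed met, namely that the secondary products $\overline{\mu_{BG_i}}$ are defined in bidegree $(k_i, l_i)$ for both $i = 1, 2$. By Theorem \ref{def-of-secondary}, the condition for definability with $K_i = L_i = pt$ reduces to $k_i > \dim_R(pt) - \dim(G_i) = -\dim(G_i)$ and $l_i > -\dim(G_i)$. Since $G_i$ is a compact orientable Lie group, $\dim_R(G_i) = \dim(G_i)$, so the assumption $k_i, l_i > -\dim_R(G_i)$ of the corollary is precisely the required bound.

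There is no real obstacle here beyond the bookkeeping of identifications: the essential content (the factorisation of $\overline{\mu_{M_G}}$ along the product decomposition and the vanishing of one factor by Theorem \ref{vanishing}) has already been supplied in the proof of Theorem \ref{vanishing-general}. The corollary is therefore obtained by setting all base manifolds and source spaces to be a point and reading off the degree condition.
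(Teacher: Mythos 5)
Your proof is correct and follows exactly the route the paper takes: the paper simply says ``Taking $M_i=K_i=L_i$ to be a point we get the following corollary,'' and your write-up supplies the bookkeeping (the identification $\overline{\mu_{M_G}}=\overline{\mu_{BG}}$ for $M=K=L=pt$, the reduction of the definability condition from Theorem \ref{def-of-secondary} to $k_i,l_i>-\dim(G_i)$, the observation $\dim_R(G_i)=\dim(G_i)$ for a compact Lie group, and the extension from decomposable classes to the image of the cross product by bilinearity) that the paper leaves implicit.
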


We immediately obtain the following:
\begin{cor}\label{cor:torus}
The secondary product $\overline{\mu_{BG}}$ in the homology of $BG$ vanishes when $G$ is the torus $T^n$ {with} $n>1$.
\end{cor}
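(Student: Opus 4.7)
The plan is straightforward: reduce to Corollary \ref{vanishing-product} by splitting the torus as $T^n \cong T^{n-1}\times S^1$, a factorisation that is available precisely because $n>1$. Before doing this I would remark that since $T^n$ is abelian, its adjoint representation on the Lie algebra $\mathfrak{t}^n$ is trivial, and is in particular $R$-orientation preserving for every commutative ring $R$, so $\overline{\mu_{BT^n}}$ is indeed defined.

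Next I set $G_1=T^{n-1}$ and $G_2=S^1$, so that $BG \simeq BG_1\times BG_2$ with $\dim(G_1)=n-1\ge 1$ and $\dim(G_2)=1$. The key structural input is that $H_*(BT^k;\Z)$ is free abelian, since $BT^k\simeq (\C P^\infty)^k$ admits a cell structure with only even-dimensional cells. Consequently $H_*(BT^k;R)$ is a free $R$-module for any commutative ring $R$, and the K\"unneth theorem yields an isomorphism
\[
H_*(BG_1;R)\otimes_R H_*(BG_2;R) \xrightarrow{\times} H_*(BG;R).
\]

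It will then follow that every tensor $\xi\otimes\eta$ with $\xi\in H_k(BG;R)$ and $\eta\in H_l(BG;R)$ decomposes as a sum of elements of the form $(\alpha_1\times\alpha_2)\otimes(\beta_1\times\beta_2)$, where $\alpha_i\in H_{k_i}(BG_i;R)$, $\beta_i\in H_{l_i}(BG_i;R)$, $k_1+k_2=k$, $l_1+l_2=l$. Since the degrees $k_i, l_i$ are non-negative while $-\dim(G_i)\le -1$, the degree hypotheses of Corollary \ref{vanishing-product} are automatically satisfied, and we conclude that $\overline{\mu_{BG}}$ annihilates each such decomposable element. By bilinearity, $\overline{\mu_{BG}}=0$.

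There is no real obstacle; the only steps needing a line of verification are the triviality of the torus adjoint action (which guarantees that the secondary product is defined) and the torsion-freeness of $H_*(BT^k;R)$ (which guarantees that the cross product is surjective via K\"unneth). The same argument would of course fail for $n=1$, since no nontrivial factorisation $T^1=G_1\times G_2$ with both factors of positive dimension is available.
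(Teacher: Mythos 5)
Your proposal is correct and is essentially the paper's own (implicit) argument: the paper derives Corollary \ref{cor:torus} directly from Corollary \ref{vanishing-product} by the splitting $T^n\cong T^{n-1}\times S^1$, with the surjectivity of the cross product coming from the K\"unneth isomorphism for the torsion-free groups $H_*(BT^k)$, exactly as you spell out. Your extra remark that the adjoint action of the torus is trivial (so the product is defined over any $R$) is a reasonable sanity check but not a new ingredient.
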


\begin{rem}
We do not know whether $\overline{\mu_{M_G}}$ is associative up to sign, though
we believe it is.
The main difficulty is that in the definition we have to choose a CW-complex $S$ and $T$ of 
the right dimension to
represent homology classes.
However, in our construction, the image of $\overline{\mu_{M_G}}$ is represented by $B$ which
is not necessarily a CW-complex of the right dimension. This prevents from directly iterating the construction. 
\end{rem}

\section{Examples of the secondary product $\overline{\mu_{M_G}}$}\label{sec:ex-secondary}
Here we present several examples of $\overline{\mu_{M_G}}$.
Throughout this section, we assume all triples $(M,G,\rho)$ are oriented.


\begin{ex}\label{ex:secondary-point-class}
{Recall the diagram in Example \ref{ex:ganea}.}
We can use this to compute partially
\[
\overline{\inprod}: H_k(BG;\Z) \otimes H_l(BG;\Z)\to H_{k+l+\dim(G)+1}(BG;\Z),
\]
where $\overline{\inprod}$ is the secondary equivariant intersection product (\S 1 (ii)) for $M=pt$.

For generators $\alpha_k \in H_{(\dim(G)+1)k}(B_nG;\Z)$,
we have
\[
\overline{\inprod}( (f_k)_*\alpha_k \otimes (f_l)_*\alpha_l) = \pm q_*(\alpha_{k+l+1}),
\]
where $q:B_{k+l+1}G\to BG$ is the whisker map.
In particular,
\[
\overline{\inprod}(\alpha_0 \otimes \alpha_0) =\pm q_*\circ \Sigma([G]) \in H_{\dim(G)+1}(BG;\Z),
\]
where $[G]$ is the fundamental class of $G$ and $q: \Sigma G\to BG$ is adjoint to the homotopy equivalence $G\xrightarrow{\sim}\Omega BG$.
\end{ex}

\begin{lem}\label{lem:induction-for-pt}
Let $i: H\subseteq G$ be the inclusion of a closed subgroup and assume that $ad(EH)$ is oriented.
Under an appropriate orientation of $(T(G/H))_G$, 
the following diagram commutes up to sign
\[
\xymatrix{
H^G_k(G/H;\Z) \otimes H^G_l(G/H;\Z)  \ar[d]^{\cong}\ar[r]^(0.52){\overline{\inprod}_G }& H^G_{k+l+\dim(H)+1}(G/H;\Z) \ar[d]^{\cong} \\
H^H_k(pt;\Z) \otimes H^H_l(pt;\Z)\ar[r]^(0.52){\overline{\inprod}_H }& H^H_{k+l+\dim(H)+1}(pt;\Z),
}
\]
where $\overline{\inprod}_H$ (resp. $\overline{\inprod}_G$) is the secondary equivariant intersection product (\S 1 (ii))
for $pt$ (resp. $G/H$).
\end{lem}
\begin{proof}
Recall the definition of the secondary equivariant intersection product from \S \ref{sec:def-secondary}.
For classes $\alpha,\beta\in H^G_*((G/H)_G;\Z)\simeq H^H_*(pt;\Z)$,
we choose maps $S\to (G/H)_G$ and $T\to (G/H)_G$, where $S$ and $T$ are CW-complexes of the appropriate dimensions, 
and lifts $\bar{\alpha}\in H_*(S;\Z)$ and $\bar{\beta}\in H_*(T;\Z)$. We can choose these maps to be fibrations.
Consider the following diagram
\[
\xymatrix{
P \ar[r]^{\hat{\Delta}_G} \ar[d] & Q \ar[r]^{\hat{p}_G} \ar[d] & S\times T \ar[d]^f\\
 (G/H)_G \ar[r]^(0.45){\Delta_G}\ar[d]^\simeq & (G/H \times G/H)_G  \ar[r]^{p_G} & 
(G/H)_G \times (G/H)_G \ar[d]^\simeq
\\
 BH  \ar[rr]^{p_H}  & & BH \times BH,  
 }
\]
where the upper squares are topological pullbacks and the lower square is strictly commutative.
Then, $\overline{\inprod}_H(\alpha,\beta)=\phi_P\circ \hat{p}_H^\natural(\bar{\alpha},\bar{\beta})$
and
$\overline{\inprod}_G(\alpha,\beta)=\Delta_G^! \circ \phi_Q\circ \hat{p}_G^\natural(\bar{\alpha},\bar{\beta})$.
By Lemma \ref{lem:phi-commutes-with-umkehr} we have
$\Delta_G^! \circ \phi_Q = \pm \phi_P\circ \hat{\Delta}_G^!$, and hence,
we would like to see $\hat{p}_H^\natural = \pm \hat{\Delta}_G^!\circ \hat{p}_G^\natural$.

Since $(G/H)_G$ has a model which is a CW-complex with finite skeleton, we can choose $S$ and $T$ to be the skeleta of the right dimension.
so that their homologies surject on the homology of $(G/H)_G$ in the dimensions.
This way we can compute the product for all classes at the same time.

Since ${S\times T}$ has the homotopy type of a finite CW-complex, there exists a closed manifold $M$ and a map 
$g:M\to {S\times T}$ which induces a surjection in homology (the dimension of $M$ does not have to coincide with that of $S\times T$). 
To see that, note that according to J.H.C. Whitehead \cite[theorem 13]{WhJ2}, every finite CW-complex has the homotopy type of finite polyhedron.
 By embedding it in a Euclidean space and taking a closed regular neighbourhood, we see it is homotopy equivalent to a compact manifold with boundary $W$.
 Choose a collar for $W$, and let $M$ be the smooth manifold obtained by gluing two copies of $M$ along their boundaries with respect to the identity map
 (this construction is called the \emph{smooth double} of $W$, see for example, \cite[VI-5]{Kosinski}). 
 Note that the fold map $M \to W$ is a retraction, hence its composition with the homotopy equivalence $W\to {S\times T}$ is the map we are looking for.  
Furthermore, considering $(G/H)_G \times (G/H)_G$ as a Hilbert manifold, then the composition $M\xrightarrow{g} {S\times T} \xrightarrow{f} (G/H)_G \times (G/H)_G$ is homotopic to a smooth map, and since $f$ is a fibration, one can choose the map $g:M\to {S\times T}$ in such a way that the composition is smooth. 

Consider the following topological pullback of the diagram above along the map $g$:
\[
\xymatrix{
M'' \ar[r]^{\tilde{\Delta}_G} \ar[d] & M' \ar[r]^{\tilde{p}_G} \ar[d] & M \ar[d]^(0.4){g}\\
P \ar[r]^{\hat{\Delta}_G} \ar[d] & Q \ar[r]^{\hat{p}_G} \ar[d] &{S\times T} \ar[d]^{f}\\
 (G/H)_G \ar[r]^(0.45){\Delta_G} & (G/H \times G/H)_G  \ar[r]^{p_G} & 
(G/H)_G \times (G/H)_G \\
 BH  \ar[rr]^{p_H} \ar@{=}[u] & & BH \times BH.  \ar@{=}[u]
}
\]
By the naturality of both umkehr maps, and the fact that $g_*$ is surjective,
it is enough to prove that $\tilde{p}_H^\natural = \pm \tilde{\Delta}_G^!\circ \tilde{p}_G^\natural$, where $\tilde{p}_H$ is the pullback
of $p_H$ along $M\to BH\times BH$, and the orientations of $\tilde{p}_H, \tilde{\Delta}_G$, and $\tilde{p}_G$ are taken to be the pullback orientations. 
The advantage of this  is that the upper row consists, as we shall see, of smooth, closed manifolds and smooth maps. 

We restrict to the first and third rows of the diagram:
\[
\xymatrix{
M'' \ar[r]^{\tilde{\Delta}_G} \ar[d] & M' \ar[r]^{\tilde{p}_G} \ar[d]^(0.4){\widetilde{f\circ g}} & M \ar[d]^(0.4){{f}\circ g}\\
 (G/H)_G \ar[r]^(0.45){\Delta_G} & (G/H \times G/H)_G  \ar[r]^{p_G} & 
(G/H)_G \times (G/H)_G.}
\]
Since $p_G$ is a proper submersion (a smooth fibre bundle with compact fibres), it is transversal to $ f\circ g$. 
This implies that $M'$ has a natural structure of a closed smooth manifold and that the maps $\tilde{p}_G$ and $\widetilde{f\circ g}$ are smooth. 
Since transversality is a local property, the bundle structure $p_H$ and $p_G$ imply that 
$\widetilde{f\circ g}$ is transversal to $\Delta_G$.
Therefore, $M''$ has a natural structure of a closed manifold and that the map  $M'' \xrightarrow{\tilde{\Delta}_G} M'$ is smooth.

By \S \ref{two-umkehr-maps} we have $\tilde{p}_H^\natural=\pm\tilde{p}_H^!$ and $\tilde{p}_G^\natural=\pm\tilde{p}_G^!$.
We will show $\tilde{p}_H^! = \pm\tilde{\Delta}_G^!\circ \tilde{p}_G^!$
by comparing the orientations of $p_G, \Delta_G$, and $p_H$ from which the orientations of 
$\tilde{p}_H, \tilde{p}_G$ and $\tilde{\Delta}_G$ are induced.
Observe that $(T(G/H))_G$ is canonically isomorphic to both the stable normal bundle $\nu(\Delta_G)$
and the  bundle of tangents along the fibre $t(i)$ associated to $i: BH\to BG$.
Thus, we orient $(T(G/H))_G= \nu(\Delta_G) = t(i)$ by the short exact sequence \eqref{EH-EG} using the orientations of $ad(EG)$ and $ad(EH)$.
Under the identification $t(p_H)=ad(EH), t(p_G)=ad(EG)$ given by Lemma \ref{orientability},
we see the orientations for 
$-t(p_H)$ and $\nu(\Delta_G)\oplus \Delta_G^*(-t(p_G))=t(i)-i^*(ad(EG))$
are equal. This implies 
$\hat{p}_G^!=(\hat{p}_G\circ \hat{\Delta}_G )^!=\pm\hat{\Delta}_G^!\circ \hat{p}_G^!$
by the composition law for the Grothendieck transfer (see \cite[\S 6 (d), Chap. V]{B}).
Therefore, we have
\[
\hat{p}_H^\natural = \pm \hat{p}_H^! = \pm (\hat{p}_G\circ \hat{\Delta}_G )^! = 
\pm\hat{\Delta}_G^!\circ \hat{p}_G^!
=\pm\hat{\Delta}_G^!\circ \hat{p}_G^\natural.
\]
\end{proof}

More generally, we believe that the following is true:
\begin{conj}
The induction isomorphism $H^G_*(M\times_H G; R) \simeq H_*^H(M; R)$
is compatible with the secondary equivariant intersection product.
\end{conj}

Lemma \ref{lem:induction-for-pt} has an interesting consequence:
\begin{cor}\label{ex:non-vanishing-secondary}
For every non trivial compact Lie group $G$ there exists a $G$-manifold such that 
the secondary equivariant intersection product is non-trivial.
\end{cor}
\begin{proof}
Take a non trivial cyclic subgroup $H\subseteq G$ when $G$ is discrete,
otherwise take $H$ to be a torus $S^1\subset G$.
Consider the $G$-manifold $M=G/H$. 
By the previous lemma, it is enough to show that the $\overline{\inprod}$ is non trivial for a point considered as a space with the trivial $H$ action. This is proved in Proposition \ref{product-rank1}.
\end{proof}


\begin{ex}[\textbf{Secondary string product in $H_*(LBG)$}]\label{sec:LBG}
Assume the same setting as in Example \ref{ex:Chataur-Menichi}.
By composing $\overline{\mu_{BG}}$ with the induced map of the concatenation map, we obtain
\[
\overline{\psi}: H_{k}(LBG;R)\otimes H_{l}(LBG;R)\to H_{k+l+\dim(G)+1}(LBG;R),
\]
where $k,l>0$.
This can be thought of as 
a secondary product for Chataur-Menichi's 
pair-of-pants product.
\end{ex}

\begin{prop}\label{non-trivial-bar-psi}
{The product $\overline{\psi}$ in $H_*(LBG;R)$ is non-trivial} if 
$\overline{\inprod}$ in $H_*(BG;R)$ is non-trivial.
Moreover, if $G$ is abelian,
the map $H_*(G;R)\otimes H_*(BG;R) \to H_*(LBG;R)$ induced by
the equivalence $G\times BG\sim LBG$ is compatible up to sign  
with the products, where the product
in $H_*(LBG;R)$ is given by $\overline{\psi}$, 
in $H_*(BG;R)$  by $\overline{\inprod}$, and in $H_*(G;R)$ by the Pontryagin product.
\end{prop}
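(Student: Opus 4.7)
\medskip

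\noindent\emph{Plan for the first assertion.} The evaluation map $ev:LBG\to BG$ has a canonical section $s:BG\to LBG$ sending a point to the constant loop at that point. This section defines a map of homotopy pullback diagrams over $BG$, from the trivial diagram $(BG=BG=BG)$ --- whose secondary product is $\overline{\inprod}$ --- to $(LBG\xrightarrow{ev}BG\xleftarrow{ev}LBG)$, whose secondary product composed with $cat_*$ is $\overline{\psi}$. On homotopy pullbacks, $s$ induces the map $BG\to\map(S^1\vee S^1,BG)$ assigning to a point the constant pair of loops, and composing this with $cat$ recovers $s$. By the naturality of the secondary product (Proposition \ref{secondary-naturality-wrt-equivariant-maps}),
\[
s_*\circ\overline{\inprod}=\overline{\psi}\circ(s_*\otimes s_*).
\]
Since $s$ is a section of $ev$, $s_*$ is injective, so non-triviality of $\overline{\inprod}$ forces non-triviality of $\overline{\psi}$.

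\medskip

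\noindent\emph{Plan for the second assertion.} When $G$ is abelian the adjoint action is trivial, and the equivalence $LBG\sim G^{ad}_G=G\times BG$ is an identification of the Borel construction with a product. Under the parallel identification $(G^{ad}\times G^{ad})_G=G\times G\times BG$, the commutative diagram in the remark preceding the subsection ``Properties of the product $\mu_{M_G}$'' becomes
\[
\xymatrix{
G\times BG & \ar[l]_(0.37){\mu_G\times\mathrm{id}} G\times G\times BG \ar[r]^(0.4){\hat p} & (G\times BG)\times(G\times BG),
}
\]
with $\hat p$ equal (up to a shuffle) to $\mathrm{id}_{G\times G}\times\Delta_{BG}$ and $cat$ equal to $\mu_G\times\mathrm{id}_{BG}$, where $\mu_G:G\times G\to G$ is the group multiplication. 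A direct K\"unneth calculation, using that the Grothendieck bundle transfer is multiplicative for products of bundles, yields the primary identity
\[
\psi\bigl((a\times b)\otimes(c\times d)\bigr)=\pm(a\cdot c)\times\inprod(b\otimes d).
\]
For the secondary identity, represent $a\times b$ and $c\times d$ by products of CW-complexes $S=S_G\times S_{BG}$ and $T=T_G\times T_{BG}$ of the correct dimensions, with product maps $f_1\times f_2$ and $h_1\times h_2$ into $G\times BG$. Tracing through the defining diagram \eqref{defining-diagram} one finds $\pull\simeq S_G\times T_G\times\tilde\pull$ with $\tilde\pull=S_{BG}\times_{BG}T_{BG}$, while $\hat S=S_G\times S_{BG}\times G$ and $\hat T=T_G\times T_{BG}\times G$. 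Using fibrewise translation by $h_1(t_G)$ and $f_1(s_G)$ in the abelian group $G$, the attaching maps $\pull\to\hat S,\hat T$ can be trivialised, so that the pushout $B$ becomes homotopy equivalent to $S_G\times T_G\times G\times G\times\tilde B$, where $\tilde B$ is the pushout used in the construction of $\overline{\inprod}$ on the $BG$-factor. Under K\"unneth, the Mayer--Vietoris boundary for $B$ identifies with the cross product with the fundamental class of $S_G\times T_G\times G\times G$ tensored with the Mayer--Vietoris boundary for $\tilde B$. Combining this with the primary identity produces the claim
\[
\overline{\psi}\bigl((a\times b)\otimes(c\times d)\bigr)=\pm(a\cdot c)\times\overline{\inprod}(b\otimes d).
\]

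\medskip

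\noindent\emph{Main obstacle.} The technical heart is the product decomposition of $B$. Since the attaching maps $\pull\to\hat S,\hat T$ are not literal projections --- they embed the graphs of $f_1$ and $h_1$ into the $G$-factors --- the space $B$ is not \emph{a priori} a product. The resolution uses the multiplication structure of the abelian group $G$ to perform a fibrewise translation trivialising these attaching maps, and then the naturality of the Mayer--Vietoris boundary under crossing with a fixed space delivers the K\"unneth-compatibility of $\phi_W$.
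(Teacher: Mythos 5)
Your proof of the first assertion is correct and is essentially identical to the paper's: both use the constant--loop section $s$ of $ev:LBG\to BG$ (the paper calls it $i$), the factorisation $cat\circ d = s$ where $d:BG\to\map(S^1\vee S^1,BG)$ is the whisker map, and naturality of the secondary product to conclude $s_*\circ\overline{\inprod}=\overline{\psi}\circ(s_*\otimes s_*)$, with $s_*$ injective.

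Your proof of the second assertion arrives at the right formula, but the central technical claim is false. You assert that after ``fibrewise translation'' the homotopy pushout $B$ of $\hat T\leftarrow W\to\hat S$ becomes homotopy equivalent to $S_G\times T_G\times G\times G\times\tilde B$. This cannot hold whenever $\dim G>0$: by the Mayer--Vietoris argument one has $H_m(B;R)=0$ for $m>N+1=k+l+\dim G+1$, yet your proposed model has nonzero homology in degree
\[
\dim_R S_G+\dim_R T_G+2\dim G+\dim_R\tilde B = k+l+3\dim G+1,
\]
which exceeds $N+1$ by $2\dim G$. The extra $G\times G$ comes from double--counting: the fibre $G$-coordinate of $\hat S$ is already realised as $h_1(t_G)$ and that of $\hat T$ as $f_1(s_G)$, so those coordinates are not independent of $S_G,T_G$ in $B$. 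More fundamentally, there is no ``translation'' on $\hat S=S_G\times S_{BG}\times G$ that can turn the attaching map $(s_G,t_G,p)\mapsto(s_G,\mathrm{proj}(p),h_1(t_G))$ into a projection, because $\hat S$ carries no $T_G$-coordinate to translate by; a homeomorphism of $\hat S$ can only move the $G$-factor by functions of $(s_G,s_{BG},g)$. So the proposed product decomposition of $B$, and the subsequent identification of the Mayer--Vietoris boundary as ``cross product with a fundamental class tensored with $\tilde\partial$,'' do not go through.

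The paper dismisses the second assertion as a ``straightforward diagram chasing.'' A route that does work is to bypass the pushout $B$ entirely and use the alternative description of $\phi_W$ given in \S\ref{sec:def-secondary} via the commuting homotopy $H:(W\times I,W\times\partial I)\to\bigl((K\times L)_G,\hat S\sqcup\hat T\bigr)$. With $(K\times L)_G=G\times G\times BG$ one checks directly that $H$ is a product map: on the $G\times G$-factor it is $(f_1\circ\mathrm{pr}_{S_G})\times(h_1\circ\mathrm{pr}_{T_G})$ independently of the interval coordinate, and on the $BG$-factor it is $\tilde H\circ(\mathrm{pr}_{\tilde W}\times\mathrm{id}_I)$ where $\tilde H$ is the commuting homotopy for the pair $(S_{BG},T_{BG})$ over $BG$. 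Since $\hat p^\natural$ also decomposes as $\mathrm{id}_{S_G\times T_G}\times\tilde p^\natural$, the K\"unneth theorem yields $\overline{\mu_{BG}}((a\times b)\otimes(c\times d))=\pm a\times c\times\overline{\inprod}(b\otimes d)$ in $H_*(G\times G\times BG;R)$, and applying $cat_*=(\mu_G\times\mathrm{id})_*$ gives the claimed compatibility. You should replace the decomposition-of-$B$ argument by this one.
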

\begin{proof}
The free loop fibration $LBG\to BG$ has a section $i$, hence $i_*:H_*(BG;R) \to H_*(LBG;R)$ is an injection. 
By naturality, we have the following commutative diagram:
\[
\xymatrix{
H_*(BG;R)\otimes H_*(BG;R) \ar[d]^{i_*\otimes i_*} \ar[r]^{\overline{\inprod}} & H_*(BG;R)\ar[d]^{d_*} \ar[rd]^{i_*}\\
H_*(LBG;R)\otimes H_*(LBG;R) \ar[r]^{\overline{\mu_{BG}}} & H_*(\mathrm{Map}(S^1\vee S^1,BG);R) 
\ar[r]^(0.6){c_*} & H_*(LBG;R),
}
\]
where $d$ is the whisker map and $c$ is the concatenation.
Since $c_*\circ d_*=i_*$ is injective, the first assertion holds.
The second assertion follows from a straightforward diagram chasing.
\end{proof}
We give examples where the product $\overline{\inprod}$ in $H_*(BG;R)$ is non-trivial in the next section.

\section{Computation of the Kreck product} \label{relation to the other construction} 

The Kreck product is a product in the homology of $BG$ with $\Z$ or $\mathbb \Z/2\Z$ coefficients,
where the adjoint action of $G$ on its Lie algebra is orientation preserving.
 It is defined in a geometric way using stratifolds. 
 We will see in the Appendix: 
 \begin{prop}
 The secondary intersection product $\overline{\inprod}$ 
 coincides with the Kreck product in the case where the latter is defined,
  that is, when $M=K=L=pt$ and $R=\mathbb Z$ or $\mathbb Z /2\Z$. 
 \end{prop}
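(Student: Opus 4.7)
The plan is to translate both products into the common geometric framework of stratifold bordism over $BG$ and show they produce the same bordism class. Throughout, I use the identification of the integral homology of $BG$ with stratifold homology (see Appendix \ref{sec:stratifold}), so classes $\alpha \in H_k(BG;\Z)$ and $\beta \in H_l(BG;\Z)$ are represented by classifying maps $f \colon S \to BG$ and $g \colon T \to BG$ from closed oriented stratifolds of the correct dimensions $k, l$. I assume $k, l > -\dim(G)$ so that $\overline{\inprod}$ is defined.

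First I would unpack $\overline{\inprod}$ in this setting. Since $M = X = Y = pt$, the factorisation \eqref{key-decomp} collapses so that $\Delta_G$ is an identity, and the defining diagram \eqref{defining-diagram} becomes simply the homotopy pullback of $f$ and $g$. Thus $\hat{\Delta}_G^!$ drops out, and
\[
\mu_{BG}(\alpha \otimes \beta) = \hat{p}^\natural(\alpha \times \beta) \in H_{k+l+\dim(G)}(\pull;\Z),
\]
where $\pull \simeq \tilde S \times_G \tilde T$, with $\tilde S = f^* EG$ and $\tilde T = g^* EG$, is a closed oriented $G$-stratifold. Under the stratifold interpretation of the Grothendieck bundle transfer, this class is represented by the fundamental cycle $[\pull \to BG]$ (compare Example (2) of \S 1). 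Then
\[
\overline{\inprod}(\alpha \otimes \beta) = q_\ast\, \partial^{-1}[\pull] \in H_{k+l+\dim(G)+1}(BG;\Z),
\]
where $q \colon B \to BG$ is the whisker map out of the homotopy pushout $B$ of $S \leftarrow \pull \to T$, and $\partial \colon H_{k+l+\dim(G)+1}(B;\Z) \to H_{k+l+\dim(G)}(\pull;\Z)$ is the Mayer--Vietoris boundary, which is an isomorphism in this degree since $\dim_R S = k$ and $\dim_R T = l$ are strictly less than $k+l+\dim(G)+1$ by our hypothesis.

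Next, I would recall Kreck's construction in the appendix's stratifold language. Given $f, g$ as above, Kreck produces a closed oriented $(k+l+\dim(G)+1)$-dimensional stratifold $\mathcal K$ together with a canonical map $\kappa \colon \mathcal K \to BG$, by geometrically gluing stratifold cobordisms along the projections $\pull \to S$ and $\pull \to T$; the Kreck product is $\kappa_\ast[\mathcal K]$. The central geometric observation I would prove is that $\mathcal K$ is a stratifold model for the homotopy pushout $B$: both are built as the double mapping cylinder of $S \leftarrow \pull \to T$, and $\kappa$ agrees with $q$ up to canonical homotopy. Hence $[\mathcal K]$ corresponds under this identification to $[B]$, and $[B] = \partial^{-1}[\pull]$ by the defining property of the Mayer--Vietoris boundary applied to the fundamental class of a pushout. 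Pushing forward to $BG$ then yields $\kappa_\ast[\mathcal K] = q_\ast[B] = \overline{\inprod}(\alpha \otimes \beta)$.

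The main obstacle is the bookkeeping of orientations. Kreck's stratifold $\mathcal K$ inherits its orientation from $\tilde S$, $\tilde T$, and $\pull$ via an explicit gluing convention, whereas the orientation on $[B]$ in our construction is induced by the cap product with the Thom class of $t(\hat p)$, identified with $ad(EG)$ pulled back to $\pull$ via Lemma \ref{orientability}. To match the two, I would explicitly compute the relevant Thom class signs at the level of stratifold cobordism, using the short exact sequences of Lemma \ref{bundle-of-tangents} and the fact that, with $M = pt$, the triple orientability condition reduces to an orientation of $ad(EG)$ alone. Once this is in place, the naturality of the Mayer--Vietoris boundary under bordism concludes the proof.
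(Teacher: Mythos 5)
Your proposal is correct in outline and arrives at the same geometric picture as the paper, but reaches it more directly. The paper proves a stronger Proposition \ref{prop-appendix}(2), valid for an arbitrary oriented triple $(M,G,\rho)$, by realising $\mu_{M_G}$ via transversal intersections of stratifolds (Proposition \ref{prop-appendix}(1), which itself requires the cofibration umkehr and the weak-equivalence/fibration factorisation) and by commuting $\phi$ past $\hat\Delta_G^!$ as in Proposition \ref{secondary-restriction}; it then records the $M=pt$ case as a remark, since then $S\pitchfork T = S\times T$ and the homotopy pushout is the join. Because you fix $M=pt$ at the outset, $\Delta_G$ is the identity, $\hat\Delta_G^!$ drops out, and all of that machinery becomes unnecessary: you only need Lemma \ref{lem:fundamental-class} for $\hat p^\natural$ on fundamental classes and the Mayer--Vietoris step. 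This is a genuine simplification for the stated proposition, at the cost of not re-deriving the general $M$ case. A few points should still be made precise if you want this to stand alone: (i) your appeal to "Example (2) of \S1" for the transfer on fundamental classes should instead be Lemma \ref{lem:fundamental-class}; (ii) the claim $\partial[B]=\pm[\pull]$ deserves a sentence of proof (it is the standard statement that the Mayer--Vietoris boundary of the fundamental class of a stratifold decomposed along a codimension-one stratum is the fundamental class of that stratum, applied to $B=(\tilde S*\tilde T)/G$ with separating stratum $\pull=(\tilde S\times\tilde T)/G$); (iii) the identification of $\mathcal K=(\tilde S*\tilde T)/G$ with the double mapping cylinder $B$ of $S\leftarrow \pull\to T$, and of $\kappa$ with the whisker map $q$, should be spelled out, e.g.\ by checking that $(\tilde S\times C\tilde T)/G$ is the mapping cylinder of $\pull\to S$. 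The orientation comparison you flag is real but only needs to be settled up to sign, matching the $\pm$ already present in Proposition \ref{prop-appendix}, so it is less of an obstacle than you suggest.
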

We denote $\bar\inprod(a\otimes b)$ by $a\ast b$ when $M=K=L=pt$.
It was shown in \cite{T1} that the when $G$ is finite and $R=\mathbb{Z}$ then the Kreck product coincides with the cup product in negative Tate cohomology. In particular, we have
\begin{cor}
For a finite group $G$, 
the product $\ast$ in $H_*(BG;\Z)$ coincides with the cup product in negative Tate cohomology.
\end{cor}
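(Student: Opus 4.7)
The plan is to chain together two identifications that are already in place by the time this corollary appears. First, the immediately preceding Proposition (to be proved in the appendix) asserts that whenever the Kreck product $\ast$ is defined --- namely when $M = X = Y = pt$ and $R = \Z$ or $\Z/2\Z$ --- it agrees with the secondary intersection product $\overline{\inprod}$. For a finite group $G$ we have $\dim(G) = 0$, so $ad(EG)$ is the zero bundle and is trivially $\Z$-oriented; the hypothesis of the Proposition is satisfied, and the degree condition $k,l > -\dim(G) = 0$ for $\overline{\inprod}$ to be defined coincides exactly with the range $k > 0$ in which the Tate cohomology identification holds.

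Second, I would invoke the main result of \cite{T1}, which says precisely that for a finite $G$ the Kreck product on $H_*(BG;\Z)$ transports, under the classical isomorphism
\[
H_k(BG;\Z) \xrightarrow{\cong} \widehat{H}^{-k-1}(G;\Z) \qquad (k > 0),
\]
to the cup product in negative Tate cohomology. Composing the two identifications $\overline{\inprod} = \ast = \smile$ proves the corollary.

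There is essentially no obstacle: all the substantive content has been delegated, on one side to the appendix (the Proposition) and on the other to \cite{T1}. The only sanity check worth mentioning is that the degree conventions match. Our secondary product raises degree by $\dim(G) - \dim(M) + 1 = 1$ when $G$ is finite and $M$ is a point, while cup product in $\widehat{H}^*(G;\Z)$ preserves total degree; this is consistent with the index shift by $-1$ in the indexing isomorphism above, since $(-k-1) + (-l-1) = -(k+l+1)-1$. With these conventions aligned, the statement follows immediately from the two cited facts.
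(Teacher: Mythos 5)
Your proposal is correct and follows exactly the paper's argument: the corollary is deduced by chaining the appendix Proposition (secondary intersection product $\overline{\inprod}$, denoted $\ast$, agrees with the geometric Kreck product when $M=X=Y=pt$) with the result of \cite{T1} (the Kreck product coincides with the cup product in negative Tate cohomology for finite $G$). The degree sanity check you add is a pleasant confirmation but not part of the paper's one-line argument.
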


This product in negative Tate cohomology is in general non-trivial (see, for example, \cite{A-M}).
However, Benson and Carlson showed that in many cases this product vanishes:
\begin{thm*}[Benson-Carlson \cite{B-C}]
Let $\kk$ be a field of characteristic $p$. 
Suppose that the $p$-rank of $G$ is greater than one. If $H^{*}(G;\kk)$ is Cohen-Macaulay then the product vanishes.
\end{thm*}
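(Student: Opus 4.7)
My plan is to reduce the statement to a vanishing assertion for the cup product in negative Tate cohomology and then invoke the structure theory of Cohen-Macaulay modular cohomology rings, following Benson and Carlson's line of attack.

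First, the corollary immediately preceding the theorem identifies the product $\ast$ on $H_{*}(BG;\kk)$ with the cup product in $\widehat{H}^{*}(G;\kk)$ under the isomorphism $H_{k}(BG;\kk)\cong \widehat{H}^{-k-1}(G;\kk)$ for $k>0$. The statement then becomes: for all $i,j>0$,
\[
\widehat{H}^{-i}(G;\kk)\cdot\widehat{H}^{-j}(G;\kk)=0\quad\text{in}\quad\widehat{H}^{-i-j}(G;\kk).
\]
So I may work purely inside the graded-commutative Tate cohomology ring.

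Second, I would set up the commutative algebra. By Quillen's theorem the Krull dimension of $H^{*}(G;\kk)$ equals the $p$-rank $r>1$ of $G$; the Cohen-Macaulay hypothesis then supplies a homogeneous system of parameters $\zeta_{1},\ldots,\zeta_{r}\in H^{>0}(G;\kk)$ which is automatically a regular sequence, and $H^{*}(G;\kk)$ becomes a finitely generated \emph{free} module over $R=\kk[\zeta_{1},\ldots,\zeta_{r}]$. In particular, each $\zeta_{k}$ is a non-zero-divisor in $H^{*}(G;\kk)$.

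Third, I would exploit the duality built into Tate cohomology. The perfect pairing $\widehat{H}^{i}(G;\kk)\otimes\widehat{H}^{-i-1}(G;\kk)\to\widehat{H}^{-1}(G;\kk)\cong\kk$ identifies $\widehat{H}^{<0}(G;\kk)$ with the Matlis dual of the positive part as an $R$-module. Given $x\in\widehat{H}^{-i}$ and $y\in\widehat{H}^{-j}$, the associativity of the Tate cup product gives $\zeta_{k}(x\cdot y)=(\zeta_{k} x)\cdot y$; once $\deg \zeta_{k}$ is large enough, $\zeta_{k}x$ lies in the range where Tate cohomology coincides with ordinary cohomology, and there the freeness of $H^{*}(G;\kk)$ over $R$ combined with the non-zero-divisor property of $\zeta_{k}$ forces $(\zeta_{k}x)\cdot y=0$. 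Hence $x\cdot y$ is annihilated by every $\zeta_{k}$, i.e.\ it lies in the $R$-socle of $\widehat{H}^{*}(G;\kk)$. Because $R$ has Krull dimension $r>1$, its local cohomology $H^{s}_{\mathfrak{m}}(R)$ vanishes for $s<r$, and under Matlis duality the socle of the free $R$-module $H^{*}(G;\kk)$ is concentrated in a single codimension-$r$ bi-degree. A direct check of bi-degrees shows that a product of two classes from $\widehat{H}^{<0}$ cannot land in this socle piece, and so $x\cdot y=0$.

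The main obstacle is the last bi-degree bookkeeping step: making precise the claim that the socle of the dualised free $R$-module avoids the degrees in which a product of two negative-degree classes could possibly be non-zero, and verifying that the existence of two algebraically independent parameters $\zeta_{1},\zeta_{2}$ is exactly what collapses these degrees. This is the technical heart of Benson and Carlson's argument in \cite{B-C}, which I would reproduce (or cite as a black box) rather than reinvent.
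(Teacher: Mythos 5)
The paper does not prove this theorem; it is quoted as a cited result of Benson and Carlson \cite{B-C}, so there is no internal proof for your argument to be measured against.

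As a reconstruction of the Benson--Carlson argument, the skeleton you describe is reasonable: pass to negative Tate cohomology via the preceding corollary, bring in a homogeneous system of parameters $\zeta_1,\ldots,\zeta_r$ over which $H^*(G;\kk)$ is free, exploit Tate duality, and deduce vanishing from $r>1$. But there is a concrete gap in the middle. You assert that once $\deg\zeta_k$ is large enough so that $\zeta_k x$ lies in the ordinary cohomology range, freeness over $R$ together with the non-zero-divisor property of $\zeta_k$ forces $(\zeta_k x)\cdot y=0$, and hence that $x\cdot y$ is annihilated by every $\zeta_k$. This implication does not hold: $\zeta_k x$ is a positive-degree ordinary class and $y$ a negative Tate class, and such products are generically non-zero --- the very perfect pairing $\widehat{H}^{n}(G;\kk)\otimes\widehat{H}^{-n-1}(G;\kk)\to\widehat{H}^{-1}(G;\kk)\cong\kk$ you invoke consists of exactly such products. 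In fact the conclusion you are aiming for at that step is false when $r=1$: for $G=\Z/p\Z$ with $p$ odd one has $\widehat{H}^*(G;\kk)\cong\kk[\zeta,\zeta^{-1}]\otimes\Lambda(y)$, and $\zeta^{-1}\cdot\zeta^{-1}=\zeta^{-2}\neq 0$ is not killed by any power of $\zeta$. So any correct version of this step must invoke $r\geq 2$ precisely there, which your sketch does not do --- you only use $r>1$ in the later socle bi-degree count, which is downstream of the gap. The missing idea is how the existence of two algebraically independent parameters forces $xy$ to be $\zeta_k$-torsion; that, rather than the socle bookkeeping you flag, is the real technical heart of Benson--Carlson's proof.
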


We now use our construction to compute the Kreck product including both trivial and non-trivial cases.
Namely, we give a complete computation for finite cyclic groups, $S^1, SU(2)$, and $SO(3)$.
We also prove that the product is torsion in the case where $G$ is of positive dimension and has rank greater than one. 
Let $R$ be the coefficient ring.
We need an $R$-orientation of the universal adjoint bundle $ad(EG)$ for the product to be defined.
When $G$ is connected or finite, it admits a canonical orientation
 by that of the Lie algebra $\mathfrak{g}$.
In what follows, we fix a choice of orientation of $ad(EG)$ without mentioning it specifically.

We start with a computation of the product when $G$ is a compact connected Lie group of rank one or a finite cyclic group, where it is non-trivial. We then show that if $G$ has positive dimension and its rank is greater than 
one, then the product is torsion, hence trivial in many cases (compare with \cite[9.11]{G-M}). 
This is analogous to the above theorem by Benson and Carlson.

For $S^{1}$, $\SU(2)$ and finite cyclic groups,
 we can compute the product directly from the definition using
 Example \ref{ex:secondary-point-class}:
\begin{prop}\label{product-rank1}
Let $G$ be either $S^1, SU(2)$ or a finite cyclic group.
{Recall that $H_*(BG;\Z)$ is generated by}
$a_k\in H_{(\dim(G)+1)k}(BG;\Z)$ which are 
represented by the inclusions
$S^{(\dim(G)+1)k+\dim(G)}/G \hookrightarrow S^\infty/G \sim BG$.
For $k,k'> -\dim(G)$, we have
 \[
a_{k}*a_{k'}=\pm a_{k+k'+1}.
\]
\end{prop}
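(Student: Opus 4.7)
The plan is to deduce the formula from Example~\ref{ex:secondary-point-class}, which expresses $\overline{\inprod}$ on the images of fundamental classes of Milnor's classifying space approximations $B_nG = G^{*(n+1)}/G$.

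For the two connected cases I would realize $G = S^1$ (respectively $G = SU(2) \simeq S^3$) as the unit sphere in $\mathbb{C}$ (respectively in $\mathbb{H}$). Because the join of unit spheres in $V$ and $W$ is the unit sphere in $V \oplus W$, the $(n+1)$-fold join $G^{*(n+1)}$ is canonically the unit sphere in $\mathbb{C}^{n+1}$ (respectively $\mathbb{H}^{n+1}$), and the diagonal $G$-action is the standard scalar multiplication. Thus $B_nG$ is the closed oriented manifold $\mathbb{CP}^n$ (respectively $\mathbb{HP}^n$) of dimension $(\dim(G)+1)n$; its fundamental class is $\alpha_n$, and the whisker map $f_n: B_nG \to BG$ is the standard skeletal inclusion, so that $(f_n)_*\alpha_n = a_n$. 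Example~\ref{ex:secondary-point-class} then gives directly
\[
 a_k \ast a_{k'} = \overline{\inprod}\bigl((f_k)_*\alpha_k \otimes (f_{k'})_*\alpha_{k'}\bigr) = \pm q_*\alpha_{k+k'+1} = \pm a_{k+k'+1}.
\]

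For finite cyclic $G = \mathbb{Z}/m$ the Milnor model $B_n\mathbb{Z}/m$ is no longer a closed manifold, so I would instead invoke Proposition~\ref{secondary-restriction} for the inclusion $i: \mathbb{Z}/m \hookrightarrow S^1$ (as the $m$-th roots of unity). The fibre of $Bi$ is $S^1/(\mathbb{Z}/m) \cong S^1$, so the Grothendieck bundle transfer $i^\natural$ raises degree by $N = \dim(S^1) - \dim(\mathbb{Z}/m) = 1$; pulling back the universal $S^1$-bundle along $\mathbb{CP}^k \hookrightarrow BS^1$ produces the lens space bundle $L^{2k+1}_m \to \mathbb{CP}^k$, from which one reads off $i^\natural(a_k^{S^1}) = \pm a_{2k+1}^{\mathbb{Z}/m}$. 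Combining this with the $S^1$ formula and Proposition~\ref{secondary-restriction},
\[
 a_{2k+1}^{\mathbb{Z}/m} \ast a_{2k'+1}^{\mathbb{Z}/m} = \pm\, i^\natural\bigl(a_k^{S^1} \ast a_{k'}^{S^1}\bigr) = \pm\, i^\natural a_{k+k'+1}^{S^1} = \pm a_{2(k+k')+3}^{\mathbb{Z}/m},
\]
which is exactly the asserted formula on odd indices; since $H_j(B\mathbb{Z}/m;\mathbb{Z})$ vanishes in positive even degrees, this determines $\ast$ on all non-trivial pairs of generators (the condition $k,k' > -\dim(G) = 0$ restricts attention to those pairs).

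The main delicate point will be the compatibility of orientations across the three ingredients: the construction of $\overline{\inprod}$, the Grothendieck bundle transfer $i^\natural$, and the identifications of the fundamental classes of $\mathbb{CP}^n$, $\mathbb{HP}^n$ and $L^{2k+1}_m$. This reduces to checking that the canonical orientations of $\mathrm{ad}(ES^1)$ coming from $\mathfrak{s}^1$, the trivial bundle $\mathrm{ad}(E\mathbb{Z}/m) = 0$, and the bundle of tangents along the fibre $t(i) \cong \mathfrak{s}^1$ supplied by the short exact sequence~\eqref{EH-EG} are mutually consistent; all choices are canonical up to an overall sign which is absorbed into the $\pm$ appearing in the statement.
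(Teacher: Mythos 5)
Your proof is correct.  For $S^1$ and $SU(2)$ it is essentially the paper's argument verbatim: identify the Milnor stages $B_n G$ with $\mathbb{CP}^n$, resp. $\mathbb{HP}^n$, and read off the formula from Example~\ref{ex:secondary-point-class}.  For the finite cyclic case, however, you take a genuinely different route.  The paper simply asserts that ``$B_nG$'' is the lens space $S^n/(\mathbb{Z}/m)$ and applies Example~\ref{ex:secondary-point-class} directly; strictly speaking Milnor's model $(\mathbb{Z}/m)^{*(n+1)}/(\mathbb{Z}/m)$ is \emph{not} a lens space for $m>2$ (the join of discrete sets is a wedge of spheres, not a sphere), so what the paper is really doing is running the diagram of Example~\ref{ex:ganea} with $B_nG$ replaced by the lens space and $E_nG$ replaced by $S^n\subset\mathbb{C}^{(n+1)/2}$; the key point $S^k * S^{k'} = S^{k+k'+1}$ makes the diagram close up, and $H_{\mathrm{top}}$ of a lens space is $\mathbb{Z}$.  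You instead bypass this entirely by invoking the restriction formula (Proposition~\ref{secondary-restriction}) for $\mathbb{Z}/m\hookrightarrow S^1$, using the explicit computation $i^{\natural}(a_k^{S^1}) = \pm a_{2k+1}^{\mathbb{Z}/m}$ coming from the lens-space bundle $L^{2k+1}_m\to\mathbb{CP}^k$.  This trades the need to rerun Example~\ref{ex:ganea} with a non-Milnor model for the need to verify the orientation compatibilities demanded by Proposition~\ref{secondary-restriction}; you flag this at the end, and since for $M=pt$ the only data involved are the adjoint orientations of $\mathfrak{s}^1$ and the trivial one of $\mathbb{Z}/m$, the sign bookkeeping indeed reduces to an overall $\pm$.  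One small inaccuracy that does not affect the argument: for $m=2$ the Milnor model \emph{is} a closed manifold ($\mathbb{RP}^n$), so ``no longer a closed manifold'' should be ``no longer a lens space in general''; your restriction argument works uniformly in $m$ regardless.  Overall the proof is valid and arguably more scrupulous than the paper's one-line justification of the cyclic case.
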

\begin{proof}
In these cases, $B_nG$ in Example \ref{ex:secondary-point-class}
are nothing but the corresponding projective spaces (or lens spaces) $S^{(\dim(G)+1)k+\dim(G)}/G$.
\end{proof}

In order to compute the product in $H_*(BSO(3);\Z)$, we use the following:
\begin{prop}
\label{pro:naturality for covering}
Let $\gamma:G'\to G$ be an $n$-covering of groups which preserves the orientation
of the universal adjoint bundles.
Then, for any $\alpha, \beta\in H_*(BG';R)$ we have
\[
B\gamma_{*}(\alpha*\beta)= n\cdot B\gamma_{*}(\alpha)*B\gamma_{*}(\beta). \qquad (*)
\]
\end{prop}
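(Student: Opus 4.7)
The plan is to factor the diagonal $\Delta_{BG'}$ through the fibre product group $K=G'\times_G G'$ so that the $n$-fold covering structure of $\gamma$ becomes visible, and then chase the two constructions of the secondary product.

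First I would decompose $\Delta_{BG'}\simeq r\circ q$, where $q:BG'\to BK$ is induced by the diagonal inclusion $G'\hookrightarrow K$ and $r:BK\to BG'\times BG'$ is induced by the inclusion $K\hookrightarrow G'\times G'$. Because $K/\Delta(G')\cong N$ has $n$ elements, $q$ is an $n$-sheeted cover, while $(G'\times G')/K\cong G$ makes $r$ into a $G$-bundle; moreover the square
\[
\xymatrix{
BK \ar[r]^r \ar[d] & BG'\times BG' \ar[d]^{B\gamma\times B\gamma} \\
BG \ar[r]^{\Delta_{BG}} & BG\times BG
}
\]
is a homotopy pullback. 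Representing $\alpha,\beta$ by $f:S\to BG'$ and $g:T\to BG'$ with $\dim S=k,\dim T=l$, and pulling back the factorisation along $f\times g$, I obtain
\[
W' \xrightarrow{\,c\,} W \xrightarrow{\,\hat p\,} S\times T,
\]
where $W'=S\times_{BG'} T$ is the space used to build $\alpha*\beta$, $W=S\times_{BG} T$ is the one used to build $B\gamma_*(\alpha)*B\gamma_*(\beta)$ (for the maps $B\gamma\circ f$ and $B\gamma\circ g$), $c$ is still an $n$-sheeted cover, and $\hat p$ is the pullback of the $G$-bundle $\Delta_{BG}$.

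Next, I would use the multiplicativity of the Grothendieck transfer \cite[Chap.~V (6.1)]{B} to get $\hat p'^\natural=c^\natural\circ \hat p^\natural$, and then the classical finite-cover identity $c_*\circ c^\natural=n\cdot\mathrm{id}$ to obtain $c_*\circ \hat p'^\natural=\pm n\cdot\hat p^\natural$. The remaining step is naturality of the pushout construction $\phi$: the two homotopy pushouts $B'=S\cup_{W'}T$ and $B=S\cup_W T$ are joined by a whisker map $\tilde c:B'\to B$, and the whisker maps into $BG'$ and $BG$ satisfy $q_B\circ\tilde c=B\gamma\circ q_{B'}$. Combined with naturality of the Mayer--Vietoris boundary (which is an isomorphism in the relevant degree by the vanishing results of \S\ref{sec:vanishing}), this gives $B\gamma_*\circ\phi_{W'}=\phi_W\circ c_*$. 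Putting the two identities together,
\[
B\gamma_*(\alpha*\beta)=B\gamma_*\phi_{W'}\hat p'^\natural(\bar\alpha\times\bar\beta)=\phi_W c_*\hat p'^\natural(\bar\alpha\times\bar\beta)=\pm n\cdot\phi_W\hat p^\natural(\bar\alpha\times\bar\beta)=\pm n\cdot B\gamma_*(\alpha)*B\gamma_*(\beta).
\]

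The main obstacle is the orientation bookkeeping in the transfer multiplicativity. The bundle of tangents along the fibre of $q$ is zero-dimensional, hence carries a canonical trivial orientation, while that of $r$ is, by Lemma \ref{bundle-of-tangents} and the pullback square above, the pullback of $ad(EG)$ via $BK\to BG$. Together they must reproduce the orientation of $ad(EG')$ underlying the transfer $\Delta_{BG'}^\natural$; this is exactly the content of the hypothesis that $\gamma$ preserves the orientation of the universal adjoint bundles, and is what ensures that the factor produced is $+n$ rather than merely $\pm n$.
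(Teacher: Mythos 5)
Your proposal is correct and follows essentially the same route as the paper: both factor $\Delta_{BG'}$ through $BK$ with $K=G'\times_G G'$ (this is exactly the paper's Proposition \ref{primary-extension}, which says $\mu_{BG'}=d^\natural\circ\mu_{BG}$, i.e.\ your transfer multiplicativity $\hat p'^\natural=c^\natural\circ\hat p^\natural$), and then use naturality of $\phi$ to reduce to showing that the finite cover $c=d\colon P'\to P$ satisfies $d_*\circ d^\natural=n$. The paper pins down that factor by the identity $d_*\circ d^\natural=d_\natural(1)\cdot(\text{--})$ and computes $d_\natural(1)=n$ by restricting the bundle $d$ to a fibre, where it becomes $\gamma\colon G'\to G$; you invoke the classical finite-cover identity directly, and you correctly flag that the orientation-preservation hypothesis on $\gamma$ is precisely what removes the sign ambiguity.
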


\begin{proof}
{Choose $S\xrightarrow{f}BG', T\xrightarrow{g}BG'$ to represent classes} $\alpha$ and $\beta$ as in \S \ref{sec:def-secondary}.
Consider the following diagram, where the top and the bottom squares are homotopy pullback and $d$ is the whisker map:
\[
\xymatrix@!0@=45pt{
 & P' \ar[rr]\ar'[d][dd]^{d} \ar[dl]
   & & S \ar@{=}[dd] \ar[dl]^f
\\
 T\ar[rr]^(0.7)g\ar@{=}[dd]
 & & BG' \ar[dd]_(0.7){B\gamma}
\\
 & P \ar'[r][rr] \ar[dl]
   & & S \ar[dl]
\\
 T \ar[rr]
 & & BG
}
\]
Let $\phi_{P'}$ (resp. $\phi_P$) be as in \S \ref{sec:def-secondary} for the top square
(resp. for the bottom square).
Notice that we can use the same $S$ and $T$ 
to compute the products in the left and the right hand sides of (*).
Since $\phi_{P}\circ d_*=B\gamma_*\circ \phi_{P'}$, we have only to show 
$d_*\circ \mu_{BG} = n \mu_{BG'}$.
By Proposition \ref{primary-extension},
we have $\mu_{BG'}=d^\natural\circ \mu_{BG}$.
By \cite[Chap. V (6.2) (d)]{B}, 
\[
d_*\circ \mu_{BG'}=d_*\circ d^\natural\circ \mu_{BG}=
d_\natural(1) \cdot \mu_{BG},
\]
where $d_\natural$ is the Grothendieck bundle transfer on the cohomology.
Finally, by the commutative diagram of fibrations
\[
\xymatrix{
  G' \ar[r]^{\gamma} \ar[d] &  G \ar[d]^\iota \\
 P' \ar[r]^{d} \ar[d]^{\hat{p}'_G} &  P \ar[d]^{\hat{p}_G} \\
S\times T \ar@{=}[r] & S\times T.
}
\]
we see $\iota^*\circ d_\natural(1)=\gamma_\natural(1)$ by naturality.
Since $\gamma$ is $n$-covering and orientation preserving, we have $\gamma_\natural(1)=n$.
\end{proof}

Now, we give a complete description of the product in $H_*(B\SO(3);\Z)$.
We will see in the proof of Proposition \ref{vanishing-classical},
 that the product vanishes on $2$-torsion elements.
 For the free part,
let $\gamma:\SU(2)\to \SO(3)$ be the double cover. 
By a Serre spectral sequence argument
we have $\gamma_{*}(a_{k})=4^{k}b_{k}$, where $a_k\in H_{4k}(B\SU(2);\Z)$ and $b_k\in H_{4k}(B\SO(3);\Z)$
are the generators for the free parts.
By Propositions  \ref{product-rank1} and \ref{pro:naturality for covering}, we conclude:
\begin{prop}\label{computation-so3}
The only non-trivial products in $H_{*}(BSO(3);\Z)$ are
 \[
b_{k}*b_{k'}=\pm 2b_{k+k'+1}\quad(k,k'\ge0).
\]
\end{prop}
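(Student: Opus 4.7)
The plan is to combine the computation of the product on $H_*(B\SU(2);\Z)$ from Proposition \ref{product-rank1} with the covering-map naturality in Proposition \ref{pro:naturality for covering}, applied to the double cover $\gamma:\SU(2)\to\SO(3)$. Since $H_*(B\SO(3);\Z)$ decomposes as the direct sum of the free part generated by the $b_k$ in degrees $4k$ and a $2$-torsion part, I would handle all products involving a $2$-torsion input, as well as the possible $2$-torsion output component of $b_k*b_{k'}$, by appealing to the vanishing statement proved in the course of Proposition \ref{vanishing-classical}. Accepting this, the task reduces to determining $b_k*b_{k'}$ modulo $2$-torsion.

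Next I would invoke Proposition \ref{pro:naturality for covering} with $n=2$. The orientation-compatibility hypothesis is automatic, because $d\gamma:\mathfrak{su}(2)\xrightarrow{\sim}\mathfrak{so}(3)$ is an isomorphism of Lie algebras, so a chosen orientation of $ad(E\SU(2))$ pushes forward to one of $ad(E\SO(3))$. The proposition then gives
\[
B\gamma_*(a_k * a_{k'}) \;=\; 2\bigl( B\gamma_*(a_k) * B\gamma_*(a_{k'}) \bigr).
\]
Substituting $a_k * a_{k'} = \pm a_{k+k'+1}$ from Proposition \ref{product-rank1} together with $B\gamma_*(a_j) = 4^{j}\, b_j$ (from the preceding Serre spectral sequence computation) transforms this into
\[
\pm 4^{k+k'+1}\, b_{k+k'+1} \;=\; 2\cdot 4^{k+k'}\,(b_k * b_{k'}).
\]
Writing $b_k*b_{k'} = m\, b_{k+k'+1} + \tau$ with $\tau$ a $2$-torsion class, the operation $2\cdot 4^{k+k'}\cdot(-)$ annihilates $\tau$ and, by comparing coefficients on the free generator $b_{k+k'+1}$, forces $m=\pm 2$. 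Combined with $\tau = 0$ from the first step, this yields $b_k*b_{k'} = \pm 2\, b_{k+k'+1}$.

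The main obstacle is controlling the potential $2$-torsion remainder $\tau$, since the naturality equation only controls the product modulo $2$-torsion: the covering-map argument is blind to this piece because $2\cdot 4^{k+k'}$ kills it. This is precisely why the argument must rest on the independently-deferred vanishing result of Proposition \ref{vanishing-classical}; without it, the present strategy only determines $b_k*b_{k'}$ up to a $2$-torsion ambiguity.
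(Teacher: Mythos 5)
Your proposal is correct and matches the paper's approach essentially point for point: appeal to the $D_2=(\Z/2)^2\subset\SO(3)$ restriction argument from the proof of Proposition \ref{vanishing-classical} to dispose of products with a $2$-torsion input and of the possible $2$-torsion remainder in $b_k*b_{k'}$, and then pin down the free component via the double cover $\gamma:\SU(2)\to\SO(3)$ using $\gamma_*(a_k)=4^k b_k$, Proposition \ref{product-rank1}, and the $n$-covering naturality $(*)$ of Proposition \ref{pro:naturality for covering}. The arithmetic $2\cdot 4^{k+k'}m=\pm 4^{k+k'+1}$ giving $m=\pm 2$ is exactly the paper's conclusion.
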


In the case where $G$ has positive dimension and its rank is greater than one, we use our vanishing theorem (Corollary \ref{vanishing-product}) to show that the product is torsion, and in many cases trivial.

First, we prove a small lemma:
\begin{lem}\label{injective-transfer}
Let $\kk$ be a field.
Let $H\subseteq G$ be a closed subgroup with specified orientations for $ad(EH)$ and $ad(EG)$.
Let $G/H \xrightarrow{j} BH \xrightarrow{i} BG$ be a homogeneous bundle whose bundle of
tangents along the fibre $t(i)$ is oriented
by \eqref{EH-EG}.
If $j^*:H^*(BH;\kk) \to H^*(G/H;\kk)$ is surjective, then the Grothendieck bundle transfer 
$i^\natural:H_*(BG;\kk)\to H_{*+\dim(G/H)}(BH;\kk)$ is injective.
\end{lem}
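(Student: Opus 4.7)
The plan is to prove injectivity of the homology transfer $i^\natural$ by first establishing the surjectivity of the cohomology transfer $i_\natural : H^{*+d}(BH;\kk) \to H^*(BG;\kk)$ (where $d = \dim(G/H)$) and then passing to the dual via the adjointness of transfer maps. Since we work with field coefficients, this duality argument will be clean.

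The first step is to produce an explicit pre-image of $1 \in H^0(BG;\kk)$ under $i_\natural$. Since $j^*$ is surjective by hypothesis, pick $\tilde{e} \in H^d(BH;\kk)$ with $j^*(\tilde{e}) = e$, where $e \in H^d(G/H;\kk)$ is the class dual to the (chosen) fundamental class. Applying the pullback naturality of the Grothendieck bundle transfer (\cite[Chap.~V]{B}) to the square with $pt \to BG$ and fibre $G/H \xrightarrow{j} BH$, I obtain $(pt \to BG)^* \circ i_\natural = q_\natural \circ j^*$, where $q : G/H \to pt$. Applied to $\tilde{e}$, the right hand side is $q_\natural(e) = \pm 1 \in H^0(pt;\kk)$, since $q_\natural$ is ordinary integration along the closed oriented fibre. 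Because $H^0(BG;\kk) \to H^0(pt;\kk)$ is an isomorphism, this forces $i_\natural(\tilde{e}) = \pm 1 \in \kk$.

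The second step uses the projection formula for the Grothendieck bundle transfer, $i_\natural(i^*(\alpha) \smile \beta) = \alpha \smile i_\natural(\beta)$, another standard property from \cite{B}. Setting $\beta = \tilde{e}$ gives $i_\natural(i^*(\alpha)\smile \tilde{e}) = \pm\alpha$, so $i_\natural$ hits every $\alpha \in H^*(BG;\kk)$ and is surjective.

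Finally, the Pontryagin--Thom map $\Sigma^\infty (BG)_+ \to (BH)^{-t(i)}$ underlying the construction induces both $i^\natural$ in homology and $i_\natural$ in cohomology; consequently they are mutually adjoint with respect to the Kronecker pairing, i.e.\ $\langle i_\natural(\tilde\alpha), x \rangle = \langle \tilde\alpha, i^\natural(x) \rangle$ for all $\tilde\alpha \in H^{k+d}(BH;\kk)$ and $x \in H_k(BG;\kk)$. If $i^\natural(x) = 0$, then for any $\alpha \in H^k(BG;\kk)$ I choose a preimage $\tilde\alpha$ under $i_\natural$ (using surjectivity) and compute $\langle \alpha, x \rangle = \langle \tilde\alpha, i^\natural(x) \rangle = 0$. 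Since $\kk$ is a field, the Kronecker pairing is non-degenerate on each degree, forcing $x = 0$. The main (mild) obstacle is simply the bookkeeping of verifying that the specific formulations of the projection formula and transfer adjointness we invoke are in the literature in the exact generality needed; both are in Boardman's treatment, so this is straightforward rather than deep.
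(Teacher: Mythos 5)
Your proof is correct and takes essentially the same approach as the paper's: both rely on choosing $c$ (your $\tilde e$) with $j^*(c)$ dual to $[G/H]$, deducing $i_\natural(c)=\pm 1$, and then combining the projection formula with the adjointness of $i_\natural$ and $i^\natural$ under the Kronecker pairing. You reorganise the steps by first establishing surjectivity of $i_\natural$ and then dualising, whereas the paper computes $\langle c\cup i^*(x^*), i^\natural(x)\rangle$ directly for a given nonzero $x$, but the underlying argument is identical.
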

\begin{proof}
Let $x\neq 0 \in H_{*}(BG;\kk)$, then there exists $x^*\in H^{*}(BG;\kk)$ with 
the Kronecker pairing $\langle x^*, x \rangle \neq 0$.
Since $j^*$ is surjective, we can choose an element
 $c\in H^{\dim(G/H)}(BH;\kk)$ such that $j^*(c)$ is Poincar\'e dual to the the fundamental class of $G/H$.
Then, by \cite[Chap. V (6.2)]{B} we have $i_\natural(c)=1$, where $i_\natural$ is the Grothendieck bundle transfer on cohomology,
and 
\[
 \langle c \cup i^*(x^*), i^\natural(x) \rangle =  \pm \langle i_\natural(c \cup i^*(x^*)), x \rangle 
 = \pm \langle i_\natural(c)\cup x^*, x \rangle = \pm \langle x^*, x \rangle \neq 0,
\]
which shows $i^\natural(x)\neq 0$.
\end{proof}

\begin{prop}\label{vanishing-rational}
Let $G$ be a compact Lie group of rank greater than one.
Then, the product of two classes in $H_{*}(BG;\Z)$ is a torsion element.
\end{prop}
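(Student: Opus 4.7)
The plan is to pass to rational coefficients---an integer homology class is torsion precisely when its image in $H_{*}(BG;\Q)$ vanishes---and then to restrict along a maximal torus, at which point Corollary \ref{cor:torus} forces the rational product to vanish. Naturality of $\overline{\inprod}$ with respect to the coefficient change $\Z\to\Q$ (Proposition \ref{secondary-naturality-wrt-equivariant-maps}) shows that it suffices to prove $a\ast b=0$ in $H_{*}(BG;\Q)$.

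Suppose first that $G$ is connected, and let $T\subseteq G$ be a maximal torus, so that $\dim(T)=\mathrm{rank}(G)\geq 2$. The orientation of $ad(EG)$ used to define $\ast$ on $H_{*}(BG;\Z)$ pulls back, via the short exact sequence $0\to ad(ET)\to i^{*}ad(EG)\to t(i)\to 0$ of Lemma \ref{bundle-of-tangents}, to compatible orientations of $ad(ET)$ and $t(i)$; as $T$ is abelian its adjoint action is trivial, so $(pt,T,\mathrm{triv})$ is automatically $\Z$-oriented. Applying Proposition \ref{secondary-restriction} with $M=K=L=pt$ to $i:T\hookrightarrow G$ gives
\[
\overline{\mu_{BT}}\bigl(i^{\natural}(a)\otimes i^{\natural}(b)\bigr)=\pm\, i^{\natural}(a\ast b)
\]
in rational homology. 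Since $\dim(T)\geq 2$, Corollary \ref{cor:torus} makes the left hand side vanish, so $i^{\natural}(a\ast b)=0$ in $H_{*}(BT;\Q)$.

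To conclude I would invoke Lemma \ref{injective-transfer} with $\kk=\Q$ and $H=T$: the Grothendieck transfer $i^{\natural}:H_{*}(BG;\Q)\to H_{*+N}(BT;\Q)$ is injective provided $j^{*}:H^{*}(BT;\Q)\to H^{*}(G/T;\Q)$ is surjective, where $N=\dim(G/T)$. For connected $G$ this is the classical statement that $H^{*}(G/T;\Q)$ is generated by the restrictions of universal characteristic classes, which underlies the Borel isomorphism $H^{*}(BG;\Q)\cong H^{*}(BT;\Q)^{W}$. Therefore $a\ast b$ is rationally zero, hence torsion.

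For disconnected $G$, let $G_{0}$ be the identity component; since $G/G_{0}$ is finite, $\mathrm{rank}(G_{0})=\mathrm{rank}(G)\geq 2$. The fibre of $BG_{0}\to BG$ is the discrete set $G/G_{0}$, so $i^{*}ad(EG)\cong ad(EG_{0})$ and the given orientation of $ad(EG)$ descends to $ad(EG_{0})$; moreover the corresponding transfer $H_{*}(BG;\Q)\to H_{*}(BG_{0};\Q)$ is injective, because its composition with $i_{*}$ is multiplication by $|G/G_{0}|$. A further application of Proposition \ref{secondary-restriction} along $G_{0}\hookrightarrow G$ then reduces the disconnected case to the connected one handled above. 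The main technical nuisance I foresee is the orientation bookkeeping along $T\hookrightarrow G_{0}\hookrightarrow G$; everything else is forced by the vanishing and injectivity results already in place.
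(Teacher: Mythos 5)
Your proof is correct and follows essentially the same route as the paper: reduce to rational coefficients by naturality of $\overline{\inprod}$ in the coefficient ring, restrict along the maximal torus $i\colon BT\to BG$ via the Grothendieck transfer, invoke Corollary~\ref{cor:torus} to kill the product over $T$, and conclude from the injectivity of $i^\natural$ supplied by Lemma~\ref{injective-transfer}. The only substantive difference is that you treat the disconnected case separately by first transferring to the identity component $G_0$ and only then to $T\subseteq G_0$; the paper instead applies Lemma~\ref{injective-transfer} directly to $T\subseteq G$, citing surjectivity of $j^*\colon H^*(BT;\Q)\to H^*(G/T;\Q)$, which literally holds only when $G$ is connected (for disconnected $G$ the fibre $G/T$ has $|G/G_0|>1$ components while $BT$ is connected, so $j^*$ fails to be surjective already in degree $0$, though the injectivity conclusion of the lemma still holds because one only needs a class $c$ with $i_\natural(c)=1$). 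Your two-step reduction through $G_0$, using $\chi(G/G_0)=|G/G_0|\neq 0$ in $\Q$ to get injectivity of the first transfer, sidesteps this small gap cleanly.
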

\begin{proof}
Since the product is natural with respect to coefficients (Proposition \ref{secondary-naturality-wrt-equivariant-maps}),
it is sufficient to show that it vanishes with the rational coefficients.
Let $T \subseteq G$ be the maximal torus of the identity component and $i: BT\to BG$ the classifying map of the inclusion.
Since $H^*(BT;\Q)\to H^*(G/T;\Q)$ is surjective, by Lemma \ref{injective-transfer} we see that
$i^\natural$ is injective with the rational coefficients. 
By Corollary \ref{cor:torus} the product vanishes in $H_{*}(BT;\Q)$,
and so does in $H_*(BG;\Q)$ by the commutativity with $i^\natural$ (Proposition \ref{secondary-restriction}).  
\end{proof}

\begin{prop}\label{vanishing-classical}
The product in $H_{*}(BG;\Z)$ vanishes for $T^{n}$, $U(n)$ and $\Sp(n)$
when $n>1$, for $\SU(n)$ when $n>2$, and for $\SO(n)$ when $n>3$.
\end{prop}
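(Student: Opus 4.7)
The plan is to handle each of the four families separately, combining the rational vanishing of Proposition \ref{vanishing-rational} with the structure of $H_*(BG;\Z)$.

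The torus case $G = T^n$ with $n > 1$ is immediate from Corollary \ref{cor:torus}. For $G$ equal to $U(n)$ or $\Sp(n)$ with $n > 1$, or $\SU(n)$ with $n > 2$, the rank is at least two, so by Proposition \ref{vanishing-rational} the product $\alpha * \beta$ is torsion for all $\alpha, \beta \in H_*(BG;\Z)$. In each of these cases $H^*(BG;\Z)$ is a polynomial ring on characteristic classes (Chern classes $c_1,\ldots,c_n$ for $U(n)$; $c_2,\ldots,c_n$ for $\SU(n)$; symplectic Pontryagin classes for $\Sp(n)$), so $H_*(BG;\Z)$ is torsion-free by the universal coefficient theorem, forcing $\alpha * \beta = 0$.

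The case $G = \SO(n)$ with $n > 3$ is more delicate because $H_*(B\SO(n);\Z)$ has $2$-primary torsion (known to be of exponent two). Proposition \ref{vanishing-rational} shows $\alpha * \beta$ is torsion, so one must rule out the $2$-primary contributions. The plan combines several ingredients: the double cover $\gamma : \Spin(n) \to \SO(n)$ via Proposition \ref{pro:naturality for covering}, naturality under subgroup inclusions via Proposition \ref{secondary-restriction}, and the block-decomposition vanishing of Theorem \ref{vanishing-general}. For the low-dimensional cases $n \in \{4,5,6\}$, $\Spin(n)$ is respectively $\SU(2)\times \SU(2)$, $\Sp(2)$, or $\SU(4)$; all have torsion-free integral homology and rank $\geq 2$, so by the previous paragraph (together with Corollary \ref{vanishing-product} for the first) the product already vanishes in $H_*(B\Spin(n);\Z)$. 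One then deduces vanishing in $H_*(B\SO(n);\Z)$ by combining Proposition \ref{pro:naturality for covering} with the exponent-two structure of the $2$-torsion and naturality under suitable subgroup inclusions. For $n \geq 7$ one iterates via a combination of block-diagonal inclusions $\SO(k) \times \SO(n-k) \hookrightarrow \SO(n)$ and the $\Spin$-cover argument.

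The hardest step will be the $\SO(n)$ case. The transfer to the maximal torus, which detects rational vanishing, annihilates all $2$-primary torsion and therefore cannot by itself rule out $2$-torsion contributions to $\alpha * \beta$; and Proposition \ref{pro:naturality for covering} for the $\Spin$-cover provides only a factor of $2$, consistent with but not immediately forcing vanishing. The core technical challenge is combining these naturalities with the exponent-two structure of the $2$-torsion in $H_*(B\SO(n);\Z)$ to close the argument.
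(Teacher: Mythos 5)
Your handling of $T^n$, $U(n)$, $\Sp(n)$, and $\SU(n)$ is exactly the paper's argument: torsion-free integral homology plus Proposition \ref{vanishing-rational} (or Corollary \ref{cor:torus} for the torus) immediately forces vanishing.

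For $\SO(n)$, however, there is a genuine gap, and you are candid about it: the $\Spin$-cover route cannot close. Proposition \ref{pro:naturality for covering} gives $B\gamma_*(\alpha*\beta) = 2\, B\gamma_*(\alpha)*B\gamma_*(\beta)$, so knowing the product vanishes upstairs in $H_*(B\Spin(n);\Z)$ only shows that $2\,B\gamma_*(\alpha)*B\gamma_*(\beta)=0$, which says nothing new given the $2$-torsion lies in exponent two; moreover, not every class in $H_*(B\SO(n);\Z)$ lies in the image of $B\gamma_*$, so the argument does not even cover all pairs of inputs. The block-diagonal $\SO(k)\times\SO(n-k)$ idea also stalls: Theorem \ref{vanishing-general} only kills the image of the cross product, and in any case you have no induction base since $\SO(2)$ and $\SO(3)$ have \emph{nontrivial} products (Propositions \ref{product-rank1} and \ref{computation-so3}).

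The paper instead restricts all the way down to the maximal elementary abelian $2$-subgroup $D_{n-1}=(\Z/2\Z)^{n-1}\subset \SO(n)$. By Lemma \ref{injective-transfer} (the restriction $H^*(BD_{n-1};\Z/2\Z)\to H^*(\SO(n)/D_{n-1};\Z/2\Z)$ is surjective, a classical fact), the transfer $i^\natural$ followed by mod-$2$ reduction is injective on the $2$-torsion of $H_*(B\SO(n);\Z)$. Since $D_{n-1}$ is a direct product of $n-1\ge 3$ copies of $\Z/2\Z$ and one works over the field $\Z/2\Z$, every class in $H_*(BD_{n-1};\Z/2\Z)$ lies in the image of the K\"unneth cross product, so Corollary \ref{vanishing-product} kills the product downstairs; by the naturality in Propositions \ref{secondary-naturality-wrt-equivariant-maps} and \ref{secondary-restriction}, it kills the torsion part upstairs, and Proposition \ref{vanishing-rational} already killed the free part. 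This is the step your plan is missing: replace the $\Spin$-cover and block inclusions with the maximal $2$-torus, where block decomposition is total and the transfer detects torsion injectively.
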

\begin{proof}
For $G=T^n, U(n), \SU(n)$ and $\Sp(n)$ the homology of $H_*(BG;\Z)$ is torsion free. Therefore,
in these cases the assertion follows from the previous proposition.
For $G=\SO(n)$,  let $D_{n-1} = (\Z/2\Z)^{n-1}$ be its maximal two torus and $i: BD_{n-1} \to B\SO(n)$
be the classifying map of the inclusion.
By naturality with respect to coefficients  (Proposition \ref{secondary-naturality-wrt-equivariant-maps}) and restriction (Proposition \ref{secondary-restriction}) the following diagram commutes up to sign
\[
\xymatrix{
H_k(B\SO(n);\Z)\otimes H_l(B\SO(n);\Z) \ar[r]^(0.62){\bar{\inprod}} \ar[d] & H_{k+l+\dim(\SO(n))+1}(B\SO(n);\Z) \ar[d] \\
H_{k+\dim(\SO(n))}(BD_{n-1};\Z/2\Z) \otimes H_{l+\dim(\SO(n))}(BD_{n-1};\Z/2\Z) \ar[r]^(0.62){\bar{\inprod}} & H_{k+l+2\dim(\SO(n))+1}(BD_{n-1};\Z/2\Z),
}
\]
where the vertical arrows are compositions of $i^\natural$ and mod-$2$ reduction.
By the previous proposition, the image of the upper row lies 
in the torsion, which injects into $H_*(BD_{n-1};\Z/2\Z)$ by the universal coefficients theorem and Lemma \ref{injective-transfer}.
By Corollary \ref{vanishing-product} the product in the lower row vanishes. This implies that the product in the upper row vanishes.
\end{proof}

\begin{rem*}
The same argument works when all the torsion elements in $H^*(BG;\Z)$ are $p$-toral, that is, 
there is a $p$-torus $D$ of rank greater than one such that $H^*(BG;\Z/p\Z)$ injects into $H^*(BD;\Z/p\Z)$ for all the torsion primes $p$
of $G$.
For example, the product in $H_*(BG_2;\Z)$ is trivial for the compact simply-connected exceptional Lie group $G_2$.
\end{rem*}

%
%
%
%

\appendix
\section{A generalisation of the Kreck product}\label{sec:stratifold}
Throughout this Appendix, 
$G$ is assumed to be a compact Lie group whose adjoint action on its Lie algebra is $R$-orientation preserving.
The Kreck product is a product in $H_*(BG;R)$ for $R=\mathbb Z$ and $\mathbb Z /2\Z$. 
Here, we describe a generalisation of the Kreck product in $H_*(BG;\Z)$ to $H^G_*(M;\Z)$,
which allows us to understand the secondary equivariant intersection product $\overline{\inprod}$ in a geometric way.
The case when the coefficients are in $\mathbb Z /2\Z$ is obtained in 
an analogous way if one replaces oriented stratifolds by stratifolds.

First, we briefly recall stratifolds and stratifold homology, and refer to
\cite{K} for a complete treatment.
Stratifolds are a generalisation of smooth manifolds. 
A stratifold is a pair $(S,F)$, where $S$ is a topological space and $F$ is a sub-sheaf of the sheaf of continuous real functions (considered as a sheaf of $\mathbb{R}$-algebras) which satisfies certain axioms. 
In particular, for every $r\in \Z_{\ge 0}$, the $r$-stratum is defined to be the set of points $x$ in $S$, 
where the tangent space at $x$ (which is defined as the space of derivations of germs at $x$) has dimension $r$.
This stratum together with the restriction of the sheaf $F$ is required to be a
smooth manifold. 
The dimension of the stratifold is defined to be the maximal integer $k$ such that
the $k$-stratum is non-empty.
A $k$-dimensional stratifold is called oriented if its $k$-th stratum
is oriented and its $k-1$ stratum is empty. In a similar way stratifolds with boundary are defined. An example of a stratifold with boundary is the cone over a stratifold $S$, denoted by $CS$. Note that if $S$ is oriented and of positive dimension then $CS$ is oriented as well. This fact implies that the corresponding bordism theory, denoted by $SH$, is naturally isomorphic to integral homology when restricted to spaces having the homotopy type of a CW-complex.

We use here a certain class of stratifolds, called $p$-stratifolds (see \cite[p. 24 example 10]{K}). Those are constructed inductively. 
We start with a discrete set $S_0$ with the sheaf of all real functions. 
The $r^{th}$ skeleton $S_r$ is defined by attaching a smooth manifold $W_r$ of dimension $r$ to $S_{r-1}$
along a smooth map 
from its boundary $\partial W_r$ and extend the sheaf in an appropriate way. This process is similar to  the construction of a CW-complex. The reason we use $p$-stratifolds is that they have the homotopy types of CW-complexes of the same dimension as the stratifolds themselves. This fact is needed in the comparison of our product with the Kreck product in Proposition \ref{prop-appendix}. From now on, when we say stratifold we mean $p$-stratifold. 
Let $M$ be a smooth manifold. 
A smooth map $g: N\to M$ between manifolds and a map from a stratifold $f: S\to M$
 are said to be {\em transversal}
when the compositions $W_r \to S_{r} \to M$ and $\partial W_r \to S_{r-1} \to M$ 
are transversal to $g$ for all $r$.
Two maps from stratifolds $f: S\to M, g: S'\to M$ are said to be transversal when 
$S\times S'\to M\times M$ is transversal to the diagonal $\Delta: M\to M\times M$.
For an oriented $r$-dimensional stratifold $S$, 
the image of a fundamental class of $(W_r, \partial W_r)$ by the composition
\[
H_r(W_r, \partial W_r;\Z) \to H_r(S_r,S_{r-2};\Z)\cong H_r(S_r;\Z)
\]
is called a fundamental class of $S$.
We need the following property of stratifolds:
\begin{lem}\label{lem:stratifold-inclusion}
A compact finite dimensional stratifold is an ANR.
In particular, a closed inclusion of stratifolds is a cofibration.
\end{lem}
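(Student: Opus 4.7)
The plan is to proceed in two stages: first establish the ANR property by induction along the skeletal filtration of a compact finite-dimensional $p$-stratifold, then deduce the cofibration statement from a classical pair theorem for ANRs.

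For the first assertion, let $S$ be a compact $p$-stratifold of dimension $n$, with skeletal filtration $S_0\subseteq S_1\subseteq\cdots\subseteq S_n=S$. As $S$ is compact, $S_0$ is a finite discrete set, hence an ANR. Assuming inductively that $S_{r-1}$ is an ANR, recall from the definition of a $p$-stratifold that $S_r=S_{r-1}\cup_{\phi_r}W_r$, where $W_r$ is a smooth $r$-manifold with boundary (compact, by compactness of $S$) and $\phi_r\colon\partial W_r\to S_{r-1}$ is continuous. Since $W_r$ is a compact smooth manifold with boundary it is an ANR, and $\partial W_r\subseteq W_r$ is a closed ANR. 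Hanner's adjunction theorem for ANRs states that if $X,Y$ are metrizable ANRs, $A\subseteq Y$ is a closed ANR and $f\colon A\to X$ is continuous, then the adjunction space $X\cup_f Y$ is an ANR; applying this with $X=S_{r-1}$, $Y=W_r$, $A=\partial W_r$, $f=\phi_r$ shows that $S_r$ is an ANR. Iterating yields that $S=S_n$ is an ANR.

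For the second assertion, let $A\hookrightarrow S$ be a closed inclusion of compact finite-dimensional $p$-stratifolds. By the first part, both $A$ and $S$ are compact metrizable ANRs. A classical theorem (Borsuk's \emph{Theory of Retracts}; see also Str\o m) states that a closed inclusion of ANRs in a metric ANR is a Hurewicz cofibration: the pair is an NDR pair, which is equivalent to the inclusion being a cofibration. Hence $A\hookrightarrow S$ is a cofibration.

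The main subtlety I expect is verifying cleanly that the hypotheses of Hanner's gluing theorem hold at each inductive step, in particular that the spaces remain in the metrizable ANR category as attaching is carried out (which follows from compactness of the $W_r$ and the standard fact that such adjunction spaces are metrizable). Once that is in place, the remaining arguments are direct applications of the classical theory of retracts.
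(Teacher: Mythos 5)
Your argument is correct, but it takes a genuinely different route from the paper's proof of the ANR assertion. The paper does not touch the skeletal filtration at all: it observes that a compact finite-dimensional stratifold is locally contractible and metrizable, and then invokes Borsuk's theorem that such a space is an ANR. That argument is shorter and applies to general stratifolds (local contractibility holds for arbitrary stratifolds, not just $p$-stratifolds), whereas your induction via Hanner's adjunction theorem leans explicitly on the $p$-stratifold structure $S_r = S_{r-1}\cup_{\phi_r}W_r$ --- which is permissible here since the paper has already restricted to $p$-stratifolds, but is a more constructive and somewhat longer path. Your approach does have the advantage of making transparent where compactness of the $W_r$ enters, whereas the paper's proof silently subsumes that into "compact" and "finite-dimensional." For the cofibration statement, the two proofs are essentially the same: the paper cites Whitehead's 1948 note (a closed inclusion of compact ANRs is a cofibration), and you cite the equivalent NDR-pair characterization à la Borsuk/Str\o m. One small point worth making precise in your version: you should note explicitly that compactness of $S$ forces each $W_r$ to be compact (since $W_r\setminus\partial W_r$ is the $r$-stratum, a locally closed subset of the compact $S$, and properness of the attaching maps in Kreck's definition ensures $W_r$ itself is compact), as this is exactly what licenses the appeal to Hanner's theorem in the metrizable compact setting.
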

\begin{proof}
A compact finite dimensional stratifold is 
a locally contractible, metrizable space,
and hence, an ANR by a theorem of Borsuk \cite{Bor}.
By a theorem of Whitehead \cite{WhJ}, 
a closed inclusion of compact ANR's is a cofibration.
\end{proof}

Given a space $X$ together
with an action of a compact Lie group $G$, the Borel $G$-equivariant homology of $X$ with the integral coefficients 
is identified with a bordism theory \cite{T2}:
Denote by $SH_{k}^{G}(X)$ the set of bordism classes of equivariant maps $f:S\to X$
from compact oriented stratifolds $S$ of dimension $k$ with a smooth, free and
orientation preserving $G$ action modulo $G$-bordism, i.e. a bordism
with a smooth, free and orientation preserving $G$ action extending the given action on the boundary. 
Since we assumed that the adjoint representation of $G$ preserves the orientation of its Lie algebra, 
we can give a canonical orientation to $S/G$.
The map $SH_{k}^{G}(X)\to SH_{k-\dim(G)}(X\times_{G}EG)\cong H_{k-\dim(G)}(X\times_{G}EG;\mathbb{Z})$ given by 
$[f:S\to X]\mapsto (f_G)_*[S/G] \in H_{k-\dim(G)}(X\times_{G}EG;\Z)$ is a natural isomorphism. 

\begin{rem*}
If the adjoint representation of $G$ does not preserve the orientation of the Lie algebra,
then $S/G$ might be non orientable, for example, $O(2)$ acts on $SO(3)$ with orbit space $\mathbb{R} P^2$. 
In this case one cannot identify $SH_{k}^{G}(X)$ with $H_{k-\dim(G)}^G(X;\mathbb{Z})$.
\end{rem*}

When $X$ is a point $pt$, 
we simply write a class $[S\to pt]\in SH^G_k(pt)\cong H_{k-\dim(G)}(BG;\Z)$ by $[S]$.
The Cartesian product
$[S\times T]$ is trivial if
at least one of $\dim(S),\dim(T)$ is positive (say $S$), since then $S\times T$
is the boundary of $CS\times T$, which has a free and orientation
preserving $G$ action (if $\dim(S)=0$ then $CS$ has a non empty codimension $1$ stratum). The Kreck product $[S]*[T]$ in $H_*(BG;\Z)$ is a secondary product defined when both $\dim(S)$ and $\dim(T)$ are positive. It is given
by gluing together the two null bordisms $CS\times T$ and $S\times CT$
along their common boundary $S\times T$, which is isomorphic to the join $S*T$; that is,
\[
[S]*[T]=[S*T],
\]
The grading is given by
\[
H_k(BG;\Z)\otimes H_l(BG;\Z) \to H_{k+l+\dim(G)+1}(BG;\Z),
\]
where $k,l>-\dim(G)$

We generalise this construction as follows. Let $X=M$ be a smooth oriented manifold with an orientation preserving action of a compact Lie group $G$.
Let $S\to M$ and $T\to M$ be maps from stratifolds $S$ and $T$.
As is proved in \cite{T2}, their transversal intersection product $S\pitchfork T\to M$
is well-defined up to bordism.
We have the following commutative diagram:
\begin{equation*}\label{transversal-intersection}
\xymatrix{
(S\pitchfork T)/G \ar[d] \ar[r] & S/G\ar[d]\\
T/G  \ar[r] & M_G.
}
\end{equation*}
Consider the following two diagrams:
\[
\xymatrix{
(S\times T)/G \ar[d] \ar[r]^{\hat{p}_G} & S/G \times T/G\ar[d] \\
BG \ar[r]^{p_G} &  BG \times BG,
}
\qquad
\xymatrix{
 (S\pitchfork T)/G \ar[r]^i \ar[d] & (S\times T)/G \ar[d]  \\
 M_G \ar[r]^{\Delta_G} & (M\times M)_G.
}
\]
The diagram on the left is a homotopy pullback diagram where the vertical maps are the classifying maps,
and the horizontal maps are fibre bundles with fibre $G$.
The diagram on the right is a topological pullback
and $i$ is a cofibration by Lemma \ref{lem:stratifold-inclusion}. 
\begin{lem}\label{lem:fundamental-class}
Let $\hat{p}^{\natural}$ be the Grothendieck bundle transfer with respect to $p$.
Then, $\hat{p}^{\natural}([S/G]\times[T/G])$ is a fundamental class of $(S\times T)/G$. 
Let $i^!$ be the umkehr map with respect to the pullback of the Thom class of $\Delta_G$.
Then, the image of a fundamental class of $(S\times T)/G$ is a fundamental class of
$(S\pitchfork T)/G$.
\end{lem}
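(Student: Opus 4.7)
The plan is to reduce both assertions to classical facts about smooth oriented manifolds by passing to the top strata, using the defining property that the fundamental class of an oriented $r$-dimensional stratifold is the image of the fundamental class of its top stratum (a smooth manifold with boundary) under the canonical map. The key observation is that the relevant stratifolds inherit compatible strata: the top stratum of $(S\times T)/G$ is the free quotient $(S^{\mathrm{top}}\times T^{\mathrm{top}})/G$, a smooth oriented manifold of dimension $\dim(S)+\dim(T)-\dim(G)$; the top stratum of $(S/G)\times(T/G)$ is $(S^{\mathrm{top}}/G)\times(T^{\mathrm{top}}/G)$, of dimension $\dim(S)+\dim(T)-2\dim(G)$; and, after arranging that $S\to M$ and $T\to M$ are transversal on the top strata, the top stratum of $(S\pitchfork T)/G$ is a smooth oriented closed submanifold of $(S^{\mathrm{top}}\times T^{\mathrm{top}})/G$ of codimension $\dim(M)$. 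Since closed stratifold inclusions are cofibrations by Lemma \ref{lem:stratifold-inclusion}, the umkehr constructions of \S\ref{sec:primary} apply.

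For the first assertion, restricted to top strata $\hat{p}$ is a smooth principal $G$-bundle between compact oriented manifolds; in this smooth setting the Grothendieck bundle transfer coincides with integration along the fibre. A direct reading of Boardman's Pontryagin--Thom construction after choosing a fibrewise embedding of $(S\times T)/G$ into a vector bundle over $(S/G)\times(T/G)$ shows that the transfer of the fundamental class of the base is the fundamental class of the total space, up to the sign fixed by the orientation of $t(\hat{p})$. Since that orientation is by our convention the pullback of the orientation of $ad(EG)$ (which in turn is determined by an orientation of $\mathfrak{g}$ compatible with the adjoint action), this matches the quotient orientation on $(S\times T)/G$.

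For the second assertion, restricted to top strata $i$ is a smooth closed embedding of codimension $\dim(M)$, and transversality implies that its normal bundle is canonically identified with the pullback of the normal bundle of $\Delta_G$, i.e.\ of $(TM)_G$, by Lemma~\ref{orientability}. This is precisely the identification used in Remark~\ref{umkehr map for homotopy pullback} to define the Thom class entering the umkehr map, so $i^!$ is given, on a fundamental system of tubular neighbourhoods of the top stratum, by the cap product with the Thom class of the normal bundle. By the classical Poincar\'e--Thom duality for oriented submanifolds, this carries the fundamental class of the ambient manifold to the fundamental class of the submanifold.

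The main obstacle is the bookkeeping of orientations. One must verify that the intersection orientation on $(S\pitchfork T)/G$ coming from the orientations of $S$, $T$, $M$ and the adjoint action of $G$ agrees with the orientation induced by capping with the pullback Thom class, and similarly that the quotient orientation on $(S\times T)/G$ is the one produced by the transfer. Both verifications reduce to the orientation conventions fixed earlier in the paper: Lemma~\ref{triple-orientation} together with the short exact sequences used to orient the normal bundle of $\Delta_G$ and the bundle of tangents along the fibre of $p$.
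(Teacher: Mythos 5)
Your strategy---reduce to classical manifold theory at the top of the stratification---is the same as the paper's, and the first assertion (which the paper simply attributes to Boardman, Chap.~V, Lemma~6.17) is treated adequately. For the second assertion, however, there is a genuine gap. You invoke classical Poincar\'e--Thom duality for the inclusion of top strata, but the top stratum of a stratifold is an \emph{open} manifold with no absolute fundamental class in ordinary homology, so the statement ``carries the fundamental class of the ambient manifold to the fundamental class of the submanifold'' does not apply to it as written. You also conflate the top stratum (an open manifold) with the top cell $W$ (a compact manifold with boundary); it is the latter whose relative fundamental class in $H_k(W,\partial W)$ produces the fundamental class of $Q=(S\times T)/G$, and nothing in your argument passes from the compact stratifold $Q$ down to this top cell.

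The paper closes exactly this gap by relativising the umkehr map to pairs, producing a commuting square comparing $i^!\colon H_k(Q)\to H_{k-d}(Q')$ with $i^!\colon H_k(Q,Q_{k-1})\to H_{k-d}(Q',Q'_{k-d-1})$. The absolute-to-relative comparison maps are isomorphisms because orientability of the stratifolds forces the codimension-one stratum to be empty, so $Q_{k-1}=Q_{k-2}$ has CW-dimension at most $k-2$; naturality of the relativised umkehr then reduces the claim to the pair of top cells $(W,\partial W)\to(W',\partial W')$, where the desired statement finally does follow from the fact that the pullback of a Thom class along a transversal intersection is again a Thom class. Some version of this relativisation is what your argument is missing; without it, the classical duality statement you cite does not speak about the stratifold fundamental classes at all.
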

\begin{proof}
The first assertion follows from \cite[Lemma 6.17, Chap. V]{B}.
For the second, set $Q=(S\times T)/G$ and $Q'=(S\pitchfork T)/G$.
We can relativise the construction of the umkehr map in \S \ref{sec:primary}
 to have $i^!: h_k(X,B) \to h_{k-d}(A,A\cap B)$ for a cofibered subspace $B\subseteq X$
and a Thom class in $h^d(X,X\setminus A)$. By naturality the following diagram commutes:
\[
\xymatrix{
h_k(X) \ar[d]^{i^!} \ar[r]  & h_k(X,B) \ar[d]^{i^!}\\
h_{k-d}(A)  \ar[r]  & h_{k-d}(A,B\cap A),
}
\]
Set $k=\dim(Q)$, $X=Q,A=Q'$ and $B=Q_{k-1}$ (then $B\cap A=Q'_{k-d-1}$).
Then, the horizontal maps are isomorphisms when $h$ is integral homology. Therefore, it is enough to prove that the relative fundamental class in $H_k(Q,Q_{k-1})$ is mapped to a relative fundamental class in 
$H_{k-d}(Q',Q'_{k-d-1})$. Let $W$ be the manifold with boundary used to attach the top cell in $Q$.
Then, $Q'$ is obtained by attaching $W'=W\pitchfork M_G$ to $Q'_{k-d-1}$.
By naturality, it is enough to show that the relative fundamental class
for $(W,\partial W)$ is mapped to one for $(W',\partial W')$.
This follows from the fact that the pullback of a Thom class along a transversal intersection is a Thom class.
\end{proof}
We specify the fundamental classes as
$[(S\times T)/G]=\hat{p}_G^{\natural}([S/G]\times[T/G])$ and 
$[(S\pitchfork T)/G]=i^\natural([(S\times T)/G])$.

\medskip
When $k,l>\dim(M)-\dim(G)$, we have $\dim(S/G),\dim(T/G)<\dim((S\pitchfork T)/G)=k+l+\dim(G)-\dim(M)$ so 
the map $\phi_{(S\pitchfork T)/G}:H_{\dim((S\pitchfork T)/G)}((S\pitchfork T)/G;\Z)\to H_{\dim((S\pitchfork T)/G)+1}(M_G;\Z)$
 in \S \ref{sec:def-secondary} is defined.
Suppose we have two classes $\alpha\in H_{k}^{G}(M;\Z)\cong SH_{k+\dim(G)}^{G}(M)$ and $\beta\in H_{l}^{G}(M;\Z)\cong SH_{l+\dim(G)}^{G}(M)$, represented by the maps $S\to M$ and $T\to M$.
 We define a product
\[
*:H_k^G(M;\Z)\otimes H_l^G(M;\Z) \to H^G_{k+l+\dim(G)-\dim(M)+1}(M;\Z)
\]
by $\alpha*\beta=\phi_{(S\pitchfork T)/G}([(S\pitchfork T)/G])$.
When $M=pt$, we have $S\pitchfork T=S\times T$ and the homotopy pushout of $S \leftarrow S\times T \to T$ is the join $S*T$. Therefore, this coincides with the Kreck product.

By the universal property, there is a map $d:(S\pitchfork T)/G \to P$, where $P$ is the homotopy pullback of the same diagram. We use it to give an more ``geometric" interpretation of the products $\mu_{M_G}$ and $\overline{\inprod}$.
\begin{prop}\label{prop-appendix}
\begin{enumerate}
\item The transversal intersection coincides with $\mu_{M_G}$. That is,
\[
\mu_{M_G}([S/G],[T/G])=  d_*[(S \pitchfork T)/G]\in H_{k+l+\dim(G)-\dim(M)}(P;\Z).
\]
\item 
When $k,l>\dim(M)-\dim(G)$ we have
\[
\overline{\inprod}(\alpha,\beta)=\pm \alpha*\beta,
\]
where $\overline{\inprod}$ is the secondary intersection product  (\S 1 (ii)).
\end{enumerate}
\end{prop}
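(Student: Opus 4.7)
The plan is to establish (1) by direct identification of the umkehr maps appearing in the construction of $\mu_{M_G}$ with the geometric transversal intersection via Lemma~\ref{lem:fundamental-class}, and then to deduce (2) from (1) using the alternative description of $\overline{\mu_{M_G}}$ given in Remark~\ref{alternative-def-of-secondary}.

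For (1), I would apply $\mu_{M_G}$ to $\bar{\alpha} = [S/G]$, $\bar{\beta} = [T/G]$ along the classifying maps $\sigma_G, \tau_G$ of the $G$-equivariant maps $\sigma: S\to M$ and $\tau: T\to M$. Under the identification of the homotopy pullback $Q$ in \eqref{defining-diagram-of-mu} with $(S\times T)/G$, the first half of Lemma~\ref{lem:fundamental-class} gives $\hat{p}^\natural([S/G] \times [T/G]) = [(S\times T)/G]$. The inclusion $i: (S\pitchfork T)/G \hookrightarrow (S\times T)/G$ is a cofibration by Lemma~\ref{lem:stratifold-inclusion}, so pulling back the Thom class of $\Delta_G$ along the strict pullback square gives a well-defined cofibration umkehr $i^!$, and the second half of Lemma~\ref{lem:fundamental-class} then gives $i^![(S\times T)/G] = [(S\pitchfork T)/G]$. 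Since the homotopy umkehr $\hat{\Delta}_G^!$ is computed by replacing the comparison map with a fibration as in Remark~\ref{umkehr map for homotopy pullback}, and since this replacement factors through the whisker map $d: (S\pitchfork T)/G \to P$ from the strict pullback, one obtains $\hat{\Delta}_G^! = d_* \circ i^!$, whence $\mu_{M_G}([S/G], [T/G]) = d_*[(S\pitchfork T)/G]$.

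For (2), I would invoke Remark~\ref{alternative-def-of-secondary} with $K = L = M$ and identity maps, so that the homotopy fibres $F_K, F_L$ are points and the hypotheses of that remark reduce precisely to the assumed $k, l > \dim(M) - \dim(G)$. Choose as CW approximations the Borel constructions $S/G$ and $T/G$ of the stratifolds representing $\alpha$ and $\beta$: these are compact $p$-stratifolds, hence have the homotopy type of CW complexes of the correct dimensions $k$ and $l$. Unwinding the alternative diagram one finds $\bar{S} \simeq S/G$, $\bar{T} \simeq T/G$, and $\overline{W} \simeq (S \pitchfork T)/G$ (via $d$). By part (1) we have $\mu_{M_G}([S/G] \otimes [T/G]) = d_*[(S\pitchfork T)/G]$, and under these identifications the pushout of $\overline{W} \to \bar{S}, \overline{W} \to \bar{T}$ defining $\phi_{\overline{W}}$ is exactly the pushout of $(S\pitchfork T)/G \to S/G$ and $(S\pitchfork T)/G \to T/G$ with whisker map to $M_G$ used in the definition of $\alpha * \beta$. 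Hence $\overline{\inprod}(\alpha \otimes \beta) = \phi_{\overline{W}}(d_*[(S\pitchfork T)/G]) = \pm \alpha * \beta$.

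The main obstacle will be the careful verification of the equality $\hat{\Delta}_G^! = d_* \circ i^!$ that underlies both parts: one must check that the stratifold notion of transversality of $\sigma$ and $\tau$ is precisely what is needed for $(S\pitchfork T)/G$ to model the homotopy pullback $P$ via $d$, and then argue that the Thom-class umkehr intrinsic to $\hat{\Delta}_G^!$ agrees under this identification with the strict-pullback umkehr $i^!$. Tracking orientation conventions through the identifications of $\nu(\Delta_G)$ with $(TM)_G$ (Lemma~\ref{orientability}) and through Lemma~\ref{lem:fundamental-class} will also be required to pin down the sign in (2).
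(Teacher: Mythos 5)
Your proof of part~(1) is essentially the paper's: both reduce to Lemma~\ref{lem:fundamental-class} for $\hat p^\natural$, then identify $\hat\Delta_G^!$ with $d_*\circ i^!$ by factoring $(S\times T)/G\to (M\times M)_G$ as a weak equivalence followed by a fibration, so that the strict pullback $(S\pitchfork T)/G$ sits inside the homotopy pullback $P$ via the whisker map $d$, and then invoking the naturality lemma for the cofibration umkehr.

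For part~(2) you take a slightly different route than the paper. The paper argues directly from the original definition $\overline{\inprod}=\Delta_G^!\circ\phi_{(S\times T)/G}\circ\hat p^\natural$, proves the commutation $\Delta_G^!\circ\phi_{(S\times T)/G}=\pm\phi_P\circ\hat\Delta_G^!$ by an argument parallel to Proposition~\ref{secondary-restriction}, applies part~(1), and finishes with the naturality $\phi_{(S\pitchfork T)/G}=\phi_P\circ d_*$. You instead invoke Remark~\ref{alternative-def-of-secondary} to replace the original definition by $\phi_{\overline W}\circ\mu_{M_G}$, correctly checking that for $K=L=M$ with identity maps the degree hypotheses of the Remark reduce to $k,l>\dim(M)-\dim(G)$, and then apply part~(1) and the same naturality of $\phi$. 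Both routes are valid; the distinction is cosmetic, since the content of the Remark is established by exactly the same commutation argument that the paper uses directly. Two small points you should make precise: $\overline W$ is the homotopy pullback of $S/G\to M_G\leftarrow T/G$ and $d:(S\pitchfork T)/G\to\overline W$ is only a whisker map, not a homotopy equivalence (so writing ``$\overline W\simeq(S\pitchfork T)/G$ via $d$'' is imprecise --- what you really use is the naturality $\phi_{\overline W}\circ d_*=\phi_{(S\pitchfork T)/G}$, which holds because $d$ fits into a map of the relevant pushout diagrams); and you are implicitly relying on the paper's assertion that the two definitions of $\overline{\mu_{M_G}}$ agree, which the paper states without full proof --- a careful write-up should either cite it explicitly as a black box or unfold the commutation argument as the paper's own proof does.
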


\begin{proof}
\begin{enumerate}
\item
Recall from \S \ref{sec:primary} and Lemma \ref{lem:fundamental-class} that
\[
\mu_{M_G}([S/G],[T/G])=\hat{\Delta}_G^! \circ \hat{p}_G^\natural([S/G]\times [T/G])=
\hat{\Delta}_G^! ([(S\times T)/G]).
\]
In order to compute $\hat{\Delta}_G^!$ we factor the map $(S\times T)/G \to (M\times M)_G$ as the composition  
\[
(S\times T)/G \xrightarrow{j} \overline{(S\times T)/G} \xrightarrow{\pi} (M\times M)_G,
\]
where $j$ is a weak equivalence and $\pi$  a fibration. This way the following is a pullback diagram:
\[
\xymatrix{
(S \pitchfork T)/G \ar[r]^(0.55)d \ar[d]^{i} & P \ar[d]^{\hat{\Delta}_G}  \ar[r] & M_G \ar[d]^{\Delta_G}\\
(S\times T)/G \ar[r]^{\sim} & \overline{(S\times T)/G} \ar[r]^\pi & (M\times M)_G.\\
}\]
By the naturality of the umkehr map with respect to cofibrations
and Lemma \ref{lem:fundamental-class}, we conclude 
\[
\hat{\Delta}_G^! ([(S\times T)/G])=d_*\circ i^! ([(S\times T)/G])=d_*[(S \pitchfork T)/G].
\]

\item
Recall from \S \ref{sec:def-secondary} that 
\[
\overline{\inprod}(\alpha,\beta)=
{\Delta}_G^!  \circ \phi_{(S\times T)/G} \circ \hat{p}_G^\natural([S/G]\times [T/G]).
\]
Consider the following diagram, where the top and the bottom squares are homotopy pullback:
\[
\xymatrix@!0@=45pt{
 & P \ar[rr]\ar'[d][dd]^(0.4){\hat{\Delta}_G} \ar[dl]
   & & S/G \ar[dd] \ar[dl]
\\
 T/G\ar[rr]\ar[dd]
 & & M_G \ar[dd]^(0.3){\Delta_G}
\\
 & (S\times T)/G \ar'[r][rr] \ar[dl]
   & & (S\times M)/G \ar[dl]
\\
 (M\times T)/G \ar[rr]
 & & (M\times M)_G
}
\]
and denote by $\phi_P$ 
the map defined as in \S \ref{sec:def-secondary} for the top square.
In a similar way as in Proposition \ref{secondary-restriction}, we have
\[
{\Delta}_G^!\circ \phi_{(S\times T)/G} = \pm \phi_P\circ \hat{\Delta}_G^!.
\]
Therefore, by (1) we have
\[
\overline{\inprod}(\alpha,\beta)= \pm \phi_P \circ \hat{\Delta}_G^! \circ \hat{p}_G^\natural([S/G]\times [T/G])
= \pm \phi_P \circ d_* ([(S \pitchfork T)/G]).
\]
By naturality we have $\phi_{(S \pitchfork T)/G}= \phi_P \circ d_*$, and hence,
\[
\overline{\inprod}(\alpha,\beta)=\pm \alpha*\beta.
\]
\end{enumerate}
\end{proof}

\section{Homology suspension}\label{sec:suspension}
We investigate the map $\phi_W$ \eqref{eq:phi} as it is interesting in its own right.
For a homotopy commutative square
\begin{equation}\label{dg:phi}
\xymatrix{
W \ar[r]^{\hat{g}} \ar[d]^{\hat{f}} & C \ar[d]^f \\
D \ar[r]^g & A.
}
\end{equation}
a map $\phi_W: H_N(W;R)\to H_{N+1}(A;R)$ when $N>\dim_R(C), \dim_R(D)$ is defined as follows:
First, form the homotopy pushout of the upper-left corner
\begin{equation}\label{dg:B}
\xymatrix{
 W \ar[r]^{\hat{g}} \ar[d]^{\hat{f}} & C \ar[d] \ar[ddr] \\
 D \ar[r] \ar[rrd] & B \ar@{-->}[dr]^q \\ 
 & & A,
 }
\end{equation}
where $q$ is the whisker map.
Look at the Mayer-Vietoris sequence for the homotopy pushout
\[
 H_{N+1}(C;R) \oplus H_{N+1}(D;R) \to H_{N+1}(B;R) \xrightarrow{\partial} 
 H_N(W;R) \to  H_{N}(C;R) \oplus H_{N}(D;R).
\]
By the degree assumption, the both ends are trivial and the boundary operator $\partial$
is an isomorphism.
Then our map is defined by \[\phi_W=q_* \circ \partial^{-1}.\]
Since $\phi_W$ reduces to the ordinary suspension isomorphism when $C=D=pt$ and $A=\Sigma W$, 
we call it the {\em homology suspension}.

We give an alternative description of the homology suspension $\phi_W$.
By replacing $A$ with the mapping cylinder of $f$ and $g$, 
we can assume that $f$ and $g$ are cofibrations with disjoint images.
Let $H: (W\times I, W\times \{0\} \sqcup W\times \{1\}) \to (A, C\sqcup D)$
 be the commuting homotopy, where $I=[0,1]$ is the unit interval.
Since $N>\dim_R(C), \dim_R(D)$, the induced homomorphism of the quotient
$u_*: H_{N+1}(A;R) \to H_{N+1}(A,C\sqcup D;R)$
is an isomorphism. We define $\phi'_{W\to A}$ by the composition
\[
 H_N(W;R) \xrightarrow{\times [(I,\partial I)] } H_{N+1}(W\times I, W\times \partial I;R)
 \xrightarrow{H_*} H_{N+1}(A, C\sqcup D;R) \xrightarrow{u_*^{-1}} H_{N+1}(A;R),
\]
where the first map is the cross product with the generator $[(I,\partial I)]\in H_1(I,\partial I;R)$.

\begin{lem}
The two homomorphism $\phi_W$ and $\phi'_{W\to A}$ coincide.
\end{lem}
\begin{proof}
First, observe that $q_*\circ \phi'_{W\to B}=\phi'_{W\to A}$ for \eqref{dg:B}, and hence, 
it is sufficient to show the statement when \eqref{dg:phi} is a homotopy pushout.
By the naturality of the Mayer-Vietoris sequence,
we have the following commutative diagram, where horizontal sequences are the Mayer-Vietoris sequences:
\[\small
\xymatrix@=10pt{
0=H_N(C\cup (W\times [0,1)))\oplus H_N((W\times (0,1]) \cup D) \ar[r] \ar[d] &
	 H_N(A) \ar[r]^{\partial} \ar[d]^{u_*} & H_{N-1}(W\times (0,1)) \ar[d]^{Id}\ar[r] & 0 \\
0=H_N(C\cup (W\times [0,1)),C)\oplus H_N((W\times (0,1])\cup D,D) \ar[r] & H_N(A,C\sqcup D) \ar[r]^{\partial'} & H_{N-1}(W\times (0,1),\emptyset)
\ar[r] & 0 \\
0=H_N(W\times [0,1),W\times \{0\})\oplus H_N(W\times (0,1], W\times \{1\}) \ar[r] \ar[u] & H_N(W\times I,W\times \partial I)
\ar[r]^{\partial''} \ar[u]^{H_*} & H_{N-1}(W\times (0,1),\emptyset) \ar[u]^{Id} \ar[r] & 0.
}
\]
Notice that $\partial,\partial'$, and $\partial''$ are all isomorphisms.
The assertion follows from the commutative diagram:
\[
\xymatrix{
  H_N(A,C\sqcup D)  \ar[r]^{u_*^{-1}}
	 &  H_N(A)     \\
H_N(W\times I,W\times \partial I)\ar[u]^{H_*} & H_{N-1}(W\times (0,1),\emptyset)\simeq H_{N-1}(W) \ar[l]^(0.6){{\partial''}^{-1}} \ar[u]^{\partial^{-1}},
}
\]
where ${\partial''}^{-1}$ is given by the cross product with $[(I,\partial I)]$.
\end{proof}

With the second definition, we see $\overline{\mu_{M_G}}$ is indeed a ``secondary'' product.
Notice that the induced maps 
$(f\circ \hat{g})_*=(g\circ \hat{f})_*:H_*(W;R)\to H_*(A;R)$
are trivial for $*\ge N$.
The composition of
$\mu_{BG}: H_{k}(S;R) \otimes H_{l}(T;R) \to H_{k+l+\dim(G)}(\pull;R)$ defined for the outer square in \eqref{defining-diagram}
with the induced map of $(f\circ \hat{g})\sim(g\circ \hat{f}): \pull \to (K\times L)_G$
is nothing but $\mu_{BG}: H_{k}(S;R) \otimes H_{l}(T;R) \to H_{k+l+\dim(G)}((K\times L)_G;R)$ defined for the lower-right square in \eqref{defining-diagram},
which is trivial. 
Our secondary product is constructed using the homotopy between $(f\circ \hat{g})$ and $(g\circ \hat{f})$.
\medskip

Finally, we show that the homology suspension $\phi_W$ commutes with the two umkehr maps introduced in \S \ref{sec:primary}.
Given a map $i: \widehat{A}\to A$, consider the pullback of the whole diagram \eqref{dg:phi} via $i$ to have the following cube:
\[
\xymatrix@!0@=45pt{
 & \widehat{W} \ar[rr]\ar'[d][dd] \ar[dl]_i
   & & \widehat{C} \ar[dd] \ar[dl]
\\
 W\ar[rr]\ar[dd]
 & & C \ar[dd]
\\
 & \widehat{D} \ar'[r][rr] \ar[dl]
   & & \widehat{A} \ar[dl]^i
\\
 D \ar[rr]
 & & A
}
\]
\begin{lem}\label{lem:phi-commutes-with-umkehr}
\begin{enumerate}
\item Let $i$ be a fibre bundle with which the Grothendieck bundle transfer is defined (see \S \ref{sec:grothendieck-bundle-transfer}).
Then, we have $\phi_{\widehat{W}}\circ i^\natural= \pm i^\natural\circ \phi_W$
 in the degrees where the both sides are defined.
\item Let $i$ be a (homotopy pullback of a) smooth map with which the Grothendieck transfer is defined (see \S \ref{two-umkehr-maps}).
Then, we have $\phi_{\widehat{W}}\circ i^!= \pm i^!\circ \phi_W$ in the degrees where the both sides are defined.
\end{enumerate}
\end{lem}
\begin{proof}
When $i$ is a fibre bundle, we have a relative bundle map $i:(\widehat{A},\widehat{C}\sqcup \widehat{D}) \to (A, C\sqcup D)$
and the following pullback square
\[
\xymatrix{
(\widehat{W}\times I, \widehat{W}\times \partial I) \ar[d]^i \ar[r]^{\widehat{H}} &   (\widehat{A}, \widehat{C}\sqcup \widehat{D}) \ar[d]^i\\
(W\times I, W\times \partial I) \ar[r]^H & (A, C\sqcup D).
}
\]
Observe that each square in
\[
\xymatrix{
 H_N(W;R) \ar[r]^(0.35){\times [(I,\partial I)] } \ar[d]^{i^\natural} & 
   H_{N+1}(W\times I, W\times \partial I;R) \ar[r]^(0.4){H_*}
  \ar[d]^{i^\natural} & H_{N+1}(A, C\sqcup D;R) \cong H_{N+1}(A;R) \ar[d]^{i^\natural}\\
 H_{N+k}(\widehat{W};R) \ar[r]^(0.35){\times [(I,\partial I)] }  &
  H_{N+k+1}(\widehat{W}\times I, \widehat{W}\times \partial I;R) \ar[r]^(0.4){\widehat{H}_*}
 & H_{N+k+1}(\widehat{A}, \widehat{C}\sqcup \widehat{D};R) \cong H_{N+k+1}(\widehat{A};R)
 }
\]
commutes up to sign by the naturality of the relative version of the Grothendieck bundle transfer (\cite[Chap. V]{B})
with the cross product and pullback.
The first assertion follows from the commutativity of the outer square.
A similar argument proves the second case when $i^!$ is a Grothendieck transfer.
\end{proof}


\end{document}